\documentclass[11pt,oneside,reqno]{article}

\usepackage{amsmath,amsthm,amssymb,color,bm,graphicx}

\usepackage{array}
\setlength\extrarowheight{3pt}

\usepackage[margin=2.5cm,a4paper]{geometry}
\usepackage{bbm}
\usepackage{paralist}
\usepackage{mathrsfs}
\usepackage[breaklinks=true]{hyperref}
\hypersetup{
colorlinks=true,
linkcolor=black,
urlcolor=blue,
citecolor=black
}

\usepackage[square]{natbib} 

\renewcommand{\cite}{\citep*}

\numberwithin{equation}{section}
\allowdisplaybreaks[4]

\theoremstyle{plain}
\newtheorem{theorem}{Theorem}[section]
\newtheorem{proposition}[theorem]{Proposition}
\newtheorem{lemma}[theorem]{Lemma}
\newtheorem{corollary}[theorem]{Corollary}

\theoremstyle{definition}
\newtheorem{definition}[theorem]{Definition}

\newtheorem{remark}[theorem]{Remark}

\makeatletter

\renewcommand{\phi}{\varphi}
\newcommand{\eps}{\varepsilon}
\newcommand{\eq}{\eqref}

\newcommand{\dtv}{\mathop{d_{\mathrm{TV}}}}

\newcommand{\bigo}{\mathrm{O}}
\newcommand{\lito}{\mathrm{o}}
\def\tsfrac#1#2{{\textstyle\frac{#1}{#2}}}

\newcommand{\Bi}{\mathop{\mathrm{Bi}}}

\newcommand{\MN}{\mathop{\mathrm{MN}}}

\def\E{\mathbbm{E}}

\newcommand{\law}{\mathscr{L}}
\newcommand{\eqlaw}{\stackrel{d}{=}}


\newcounter{ctr}\loop\stepcounter{ctr}\edef\X{\@Alph\c@ctr}%
	\expandafter\edef\csname s\X\endcsname{\noexpand\mathscr{\X}}
	\expandafter\edef\csname c\X\endcsname{\noexpand\mathcal{\X}}
	\expandafter\edef\csname b\X\endcsname{\noexpand\boldsymbol{\X}}
	\expandafter\edef\csname I\X\endcsname{\noexpand\mathbbm{\X}}
	\expandafter\edef\csname r\X\endcsname{\noexpand\mathrm{\X}}
\ifnum\thectr<26\repeat

\def\be#1{\begin{equation*}#1\end{equation*}}
\def\ben#1{\begin{equation}#1\end{equation}}
\def\bes#1{\begin{equation*}\begin{split}#1\end{split}\end{equation*}}
\def\besn#1{\begin{equation}\begin{split}#1\end{split}\end{equation}}

\def\ba#1{\begin{align*}#1\end{align*}}
\def\ban#1{\begin{align}#1\end{align}}

\def\given{\typeout{Command 'given' should only be used within bracket command}}
\newcounter{@bracketlevel}
\def\@bracketfactory#1#2#3#4#5#6{
\expandafter\def\csname#1\endcsname##1{%
\addtocounter{@bracketlevel}{1}%
\global\expandafter\let\csname @middummy\alph{@bracketlevel}\endcsname\given%
\global\def\given{\mskip#5\csname#4\endcsname\vert\mskip#6}\csname#4l\endcsname#2##1\csname#4r\endcsname#3%
\global\expandafter\let\expandafter\given\csname @middummy\alph{@bracketlevel}\endcsname
\addtocounter{@bracketlevel}{-1}}%
}
\def\bracketfactory#1#2#3{%
\@bracketfactory{#1}{#2}{#3}{relax}{1mu plus 0.25mu minus 0.25mu}{0.6mu plus 0.15mu minus 0.15mu}
\@bracketfactory{b#1}{#2}{#3}{big}{1mu plus 0.25mu minus 0.25mu}{0.6mu plus 0.15mu minus 0.15mu}
\@bracketfactory{bb#1}{#2}{#3}{Big}{2.4mu plus 0.8mu minus 0.8mu}{1.8mu plus 0.6mu minus 0.6mu}
\@bracketfactory{bbb#1}{#2}{#3}{bigg}{3.2mu plus 1mu minus 1mu}{2.4mu plus 0.75mu minus 0.75mu}
\@bracketfactory{bbbb#1}{#2}{#3}{Bigg}{4mu plus 1mu minus 1mu}{3mu plus 0.75mu minus 0.75mu}
}
\bracketfactory{clc}{\lbrace}{\rbrace}
\bracketfactory{clr}{(}{)}
\bracketfactory{cls}{[}{]}

\def\bbklr#1{\Bigl(#1\Bigr)}

\def\norm#1{\Vert#1\Vert}
\def\bnorm#1{\bigl\Vert#1\bigr\Vert}
\def\bbnorm#1{\Bigl\Vert#1\Bigr\Vert}
\def\bbbnorm#1{\biggl\Vert#1\biggr\Vert}

\def\abs#1{\vert#1\vert}
\def\babs#1{\bigl\vert#1\bigr\vert}
\def\bbabs#1{\Bigl\vert#1\Bigr\vert}
\def\bbbabs#1{\biggl\vert#1\biggr\vert}

\def\bfloor#1{{\bigl\lfloor#1\bigr\rfloor}}

\def\ceil#1{{\lceil#1\rceil}}
\def\bceil#1{{\bigl\lceil#1\bigr\rceil}}
\def\bbceil#1{{\Bigl\lceil#1\Bigr\rceil}}
\def\bbbceil#1{{\biggl\lceil#1\biggr\rceil}}

\def\angle#1{{\langle#1\rangle}}
\def\bangle#1{{\bigl\langle#1\bigr\rangle}}
\def\bbangle#1{{\Bigl\langle#1\Bigr\rangle}}

\renewcommand\section{\@startsection {section}{1}{\z@}%
{-3.5ex \@plus -1ex \@minus -.2ex}%
{1.3ex \@plus.2ex}%
{\center\small\sc\mathversion{bold}\MakeUppercase}}

\def\subsection#1{\@startsection {subsection}{2}{0pt}%
{-3.5ex \@plus -1ex \@minus -.2ex}%
{1ex \@plus.2ex}%
{\bf\mathversion{bold}}{#1}}

\def\subsubsection#1{\@startsection{subsubsection}{3}{0pt}%
{\medskipamount}%
{-10pt}%
{\normalsize\itshape}{\kern-2.2ex. #1.}}

\def\blfootnote{\xdef\@thefnmark{}\@footnotetext}

\makeatother

\def\t#1{^{[#1]}}
\def\s#1{^{(#1)}}

\newcommand\tx{\textnormal{\textbf{x}}}

\newcommand{\PD}{\mathrm{PD}}
\newcommand{\DP}{\mathrm{DP}}

\def\BC{\mathrm{BC}}

\def\wh{\widehat}


\def\d{\delta}


\def\Lip{\mathrm{L}}
\def\Ind{\mathrm{\bI}}

\def\Delsthree#1#2#3{\cT_{#1,#2,#3}}

\def\wt{\widetilde}
\def\stleq{\leq_{\mathrm{st}}}

\begin{document}

\title{\sc\bf\large\MakeUppercase{Stein's method for the Poisson-Dirichlet distribution and the Ewens Sampling Formula, with applications to 
Wright-Fisher models
}}
\author{\sc Han~L.~Gan and Nathan Ross}
\date{\it Northwestern University and University of Melbourne}
\maketitle

\begin{abstract} 
We provide a general theorem bounding the error in the approximation of a random measure of interest--for example, the
empirical population measure of types in a Wright-Fisher model--and a Dirichlet process, which is a measure having Poisson-Dirichlet distributed atoms with i.i.d.\ labels from a diffuse distribution. The
implicit metric of the approximation theorem captures the sizes and locations of the masses, and so also yields 
bounds on the approximation between the masses of the measure of interest and the Poisson-Dirichlet distribution. 
We apply the result to bound the error in the approximation of the stationary distribution of types in the 
finite Wright-Fisher model with infinite-alleles mutation structure (not necessarily parent independent)
by the Poisson-Dirichlet distribution. An important consequence of our result is an explicit upper bound
on the total variation distance between the random partition generated by sampling from a finite Wright-Fisher 
stationary distribution, and the Ewens Sampling Formula. 
The bound is small if the sample size $n$ is much smaller than $N^{1/6}\log(N)^{-1/2}$,
where $N$ is the total population size. Our analysis requires a result of separate interest, giving
an explicit bound on the second moment of the number of types 
of a finite Wright-Fisher stationary distribution.
The general approximation result follows from a new development of Stein's method for the Dirichlet process, which follows by viewing the Dirichlet process as the stationary distribution of a Fleming-Viot process, and then applying Barbour's generator approach. 
\end{abstract}


\section{Introduction and results}
The one parameter Poisson-Dirichlet (PD) distribution is a probability measure on 
the infinite dimensional ordered simplex
\be{
\nabla_\infty:=\bclc{(p_1,p_2,\ldots): p_1\geq p_2 \geq \cdots, \sum_{i=1}^\infty p_i=1},
}
which is fundamental to combinatorial probability, population genetics and  Bayesian nonparametrics; see, for example, \cite{Arratia2003}, \cite{Pitman2006}, \cite{Ewens2004}, \cite{Ghosal2010}, \cite{Feng2010}. 
It can be defined in a number of ways, first in \cite{Kingman1975}, as a limit of symmetric Dirichlet distributions with components in decreasing order, and also as the distribution of the ordered and normalized jump-sizes in a gamma subordinator, as follows.
\begin{definition}[Poisson-Dirichlet distribution]\label{def:pd}
Let $\theta>0$, $\gamma_1>\gamma_2>\cdots$ be the points of a Poisson process on $\IR^+$ with intensity $\theta x^{-1} e^{-x}$, $x>0$, and $\gamma=\sum_{i=1}^\infty \gamma_i$. 
Define the probability measure on $\nabla_\infty$, by
\be{
\PD(\theta):=\law\bclr{\gamma_1/\gamma,\gamma_2/\gamma,\ldots}. 
} 
\end{definition}
Most relevant to our study, $\PD(\theta)$ is the distributional limit of a wide array of (scaled) random integer partitions, such as the cycle structure of random permutations, or the partition of types in a finite population model; see the references above for plenty of examples. In this paper, we derive a general result that bounds the error in the approximation of the normalized block sizes of a random partition by the Poisson-Dirichlet distribution, and apply it to the  frequencies of types in stationary distributions of some Wright-Fisher models.

Working with non-increasing sequences presents technical difficulties, and so it is standard to use other orderings of the coordinates of~$\PD(\theta)$; for example, size-bias ordering gives the Griffiths-Engen-McCloskey distribution. Another approach, which is standard in population genetics, e.g., see \cite{Ethier1993}, is to embed the~$\PD(\theta)$ into a random measure. We refer to this representation as the Dirichlet process, following the Bayesian nonparametrics literature. Let $\delta_x(\cdot)$ denote both the Dirac measure and function.
\begin{definition}[Dirichlet process]
Let $\theta>0$ and let $\pi$ be a probability measure on a compact metric space~$E$.
Let $(P_1,P_2,\ldots)\sim \PD(\theta)$ be independent of 
$\xi_1,\xi_2,\ldots$, which are i.i.d.\ with distribution~$\pi$.
Define the probability measure on $M_1 := M_1(E)$, the space of probability measures on~$E$,  by 
\ben{\label{mucomp}
\DP(\theta, \pi) := \law\bbclr{\sum_{i=1}^\infty P_i \delta_{\xi_i}}.
}
\end{definition}
For a random probability measure~$W$, and $Z \sim \DP(\theta,\pi)$, our main result is a bound on $\abs{\IE H(W) - \IE H(Z)}$ for certain
test functions~$H$. To describe these test functions, we need some notation and definitions. Let $\rC(E^k)$  be the set of continuous functions from $E^k\to\IR$. For
 $\mu\in M_1$, we denote the expectation of the function $\phi\in\rC(E)$ with respect to $\mu$ by $\angle{\phi, \mu} := \int_{E}\phi(x) \mu(dx)$. 
Letting 
$\BC^{2,1}(\IR^k)$ be the set of functions with two bounded and continuous derivatives with the second derivative Lipschitz, we define the first set of test functions as
\be{
\cH_1=\bclc{F(\mu):=f(\angle{\phi_1,\mu},\ldots, \angle{\phi_k,\mu}):
k \in \IN, f\in \BC^{2,1}(\IR^k), \phi_i\in \rC(E), i=1,\ldots, k}.
}
While convergence of expectations of functions in $\cH_1$ implies weak convergence (with respect to the Prokhorov metric) of random measures, it 
does not imply convergence of sizes of masses (e.g., consider the convergence of the sequence of probability measures putting mass~$1/2$ on both~$2^{-n}$ and~$0$, as $n\to\infty$), or convergence of sampling formulas, which only depend on the sizes of the masses and not their labels;
 see \cite{Ethier1994} for further discussion. 
Thus, the test functions $\cH_1$ are natural for the Dirichlet process--our chosen encoding for the Poisson-Dirichlet distribution--but they do not capture 
approximation by~$\PD(\theta)$, or approximation of sampling formulas, 
which are key statistics in population genetics and Bayesian nonparametrics.
To address this issue, we also consider the following family of test functions:
\be{
\cH_2 = \bclc{F(\mu) := \angle{\phi, \mu^k}: k \in \IN, \phi \in \rC(E^k)}.
}
As discussed further in Section~\ref{sec:testfun}, convergence of expectations for functions in $\cH_2$ does not imply convergence of sampling formulas in general, but it is sufficient under the assumption that the labels are independent of the masses, and i.i.d.\ distributed according to a diffuse measure~$\pi$. 

We now state our general approximation theorem.
For a real-valued function $\psi$, define the supremum norm $\norm{\psi}_\infty=\sup_{x} \abs{\psi(x)}$, and for a signed measure~$\mu$ on~$E$, define the variation norm 
\be{
\norm{\mu}=\mathop{\sup_{\phi:E\to\IR}}_{\norm{\phi}_\infty=1}\int \phi d\mu.
}

\begin{theorem}\label{THM3}
Let~$\theta > 0$ and $(W,W')$ be a pair of random atomic probability measures in $M_1$ such that 
$\law(W)=\law(W')$, and 
$\pi\in M_1$ diffuse.
Let~$\lambda\in \IR$ and let~$R$ be the random signed measure satisfying 
\ben{
\IE [W'-W| W]=-\lambda \theta(W-\pi)+ R. \label{7}
}
Then for any $H \in \cH_1\cup \cH_2$ and $Z\sim\DP(\theta, \pi)$,
\ben{\label{8}
\abs{\IE H(W)-\IE H(Z)}\leq \frac{1}{2}\bbclc{D_1(H;\theta) A_1 + D_2(H; \theta) A_2 +D_3(H; \theta) A_3},
}
where $D_i(H;\theta)$ are the explicit constants in terms of~$\theta$ and properties of~$H$, given in Definition~\ref{DEF:D}, and
\ba{
A_1&:=\frac{1}{\lambda}\IE\norm{R}, \\
A_2&:= \IE \bbcls{\bnorm{W^{*2}-W^2-\tsfrac{1}{2\lambda}\IE[(W'-W)^2|W]}}, \\
A_3 &:= \frac{1}{6\lambda} \times 
\begin{cases} 
\IE\bbcls{ \|W' - W \| \|(W'-W)^2\|}, & H \in \cH_1, \\
\IE\bbcls{\| (W' - W)^3\|}, & H \in \cH_2,
\end{cases}
}
where $W^{*2}(dx,dy):=W(dx)\delta_x(dy)$ is the measure on $(E^2,\cB(E^2))$ defined by $W^{*2}(B)=W(\{x\in E: (x,x)\in B\})$.
\end{theorem}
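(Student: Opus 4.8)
The plan is to use the generator (Barbour) form of Stein's method, with Stein operator the generator $\cA$ of the Fleming--Viot process on $M_1(E)$ having parent-independent mutation at rate~$\theta$ with mutant distribution~$\pi$, whose unique stationary law is $\DP(\theta,\pi)$. On a test function $F(\mu)=f(\angle{\phi_1,\mu},\dots,\angle{\phi_k,\mu})$ this generator acts by
\be{
\cA F(\mu)=\tsfrac\theta2\,DF(\mu)[\pi-\mu]+\tsfrac12\,D^2F(\mu)[\mu^{*2}-\mu^2],
}
where $DF(\mu)$ and $D^2F(\mu)$ denote the first and second derivatives of $F$ in its measure argument, represented as bounded continuous functions on $E$ and $E^2$ and paired with signed measures; for $H\in\cH_2$ the function $F$ is a homogeneous polynomial in~$\mu$ and the same formula applies. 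A convenient feature here --- and the reason the argument runs for $\cH_1$ with only continuous test functions $\phi_i$ --- is that this mutation operator is \emph{bounded}, so every $H\in\cH_1\cup\cH_2$ lies in the domain of $\cA$ and the generator identity used below holds without extra regularity. I would then set up the Stein equation $\cA F_H=H-\IE H(Z)$ and solve it through the Fleming--Viot semigroup $(T_t)_{t\ge0}$ as $F_H=-\int_0^\infty\bklr{T_tH-\IE H(Z)}\,dt$; exponential ergodicity of the semigroup (from the preceding sections) makes this a well-defined bounded function whose $\mu$-derivatives obey the explicit bounds $\sup_\mu\supnorm{DF_H(\mu)}\le D_1(H;\theta)$, $\sup_\mu\supnorm{D^2F_H(\mu)}\le D_2(H;\theta)$, and, for $H\in\cH_2$, $\sup_\mu\supnorm{D^3F_H(\mu)}\le D_3(H;\theta)$, while for $H\in\cH_1$ the quantity $D_3(H;\theta)$ is the Lipschitz constant of $\mu\mapsto D^2F_H(\mu)$. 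This is precisely the origin of the $\cH_1$ versus $\cH_2$ dichotomy in $A_3$ and in Definition~\ref{DEF:D}: a function in $\cH_2$ is a genuine degree-$k$ polynomial in~$\mu$ with an honest bounded third derivative, whereas a function in $\cH_1$ only inherits a Lipschitz second derivative from $f\in\BC^{2,1}$.

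The main step is to control $\IE\,\cA F_H(W)$, which by the Stein equation equals $\IE H(W)-\IE H(Z)$, using the pair $(W,W')$. Since $\law(W)=\law(W')$ and $F_H$ is bounded, $\IE\bkle{F_H(W')-F_H(W)}=0$, so expanding $t\mapsto F_H\bklr{(1-t)W+tW'}$ to second order at $t=0$ and evaluating at $t=1$ gives
\be{
0=\IE\bcls{DF_H(W)[W'-W]+\tsfrac12\,D^2F_H(W)[(W'-W)^2]}+\IE[\mathrm{Rem}],
}
with $\mathrm{Rem}$ the third-order remainder. Conditioning on~$W$, inserting $\IE[W'-W|W]=-\lambda\theta(W-\pi)+R$ into the linear term and $\IE[(W'-W)^2|W]$ into the quadratic one, solving for $\IE\bcls{DF_H(W)[W-\pi]}$, and substituting into $\IE\,\cA F_H(W)=-\tsfrac\theta2\IE\bcls{DF_H(W)[W-\pi]}+\tsfrac12\IE\bcls{D^2F_H(W)[W^{*2}-W^2]}$ removes the $\IE\bcls{DF_H(W)[W-\pi]}$ term and regroups the second-order contributions, leaving
\bes{
\IE H(W)-\IE H(Z)={}&-\tsfrac1{2\lambda}\IE\bcls{DF_H(W)[R]}-\tsfrac1{2\lambda}\IE[\mathrm{Rem}]\\
&\quad+\tsfrac12\IE\bbcls{D^2F_H(W)\bcls{W^{*2}-W^2-\tsfrac1{2\lambda}\IE[(W'-W)^2|W]}}.
}
Because $W$ and $W'$ are probability measures, $\norm{W'-W}$, $\norm{(W'-W)^2}=\norm{W'-W}^2$, $\norm{W^{*2}-W^2}$ and $\norm{(W'-W)^3}$ are all uniformly bounded, so the expectations above and the interchanges of conditional expectation with the linear functionals $DF_H(W),D^2F_H(W)$ are unproblematic; here one also uses that $W,W'$ are atomic so that $W^{*2}$, $(W'-W)^2$ and $(W'-W)^3$ are genuine finite signed measures on $E^2$ and $E^3$.

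It remains to bound the three terms. For a signed measure~$\eta$, $\babs{DF_H(W)[\eta]}\le\supnorm{DF_H(W)}\,\norm\eta$ and $\babs{D^2F_H(W)[\eta]}\le\supnorm{D^2F_H(W)}\,\norm\eta$, so the first term is at most $\tsfrac12\,D_1(H;\theta)A_1$ and the second at most $\tsfrac12\,D_2(H;\theta)A_2$. For the remainder, a one-dimensional Taylor estimate along the same segment gives $\babs{\mathrm{Rem}}\le\tsfrac16\,D_3(H;\theta)\,\norm{W'-W}\,\norm{(W'-W)^2}$ when $H\in\cH_1$ (using the Lipschitz bound on $\mu\mapsto D^2F_H(\mu)$) and $\babs{\mathrm{Rem}}\le\tsfrac16\,D_3(H;\theta)\,\norm{(W'-W)^3}$ when $H\in\cH_2$; either way the third term is at most $\tsfrac12\,D_3(H;\theta)A_3$. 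Summing the three estimates yields~\eqref{8}.

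I expect the genuine difficulty to lie not in this final assembly but in the inputs it presupposes. The substantial work is showing that the Stein equation has a bounded solution with the \emph{explicit} derivative bounds $D_i(H;\theta)$ of Definition~\ref{DEF:D}: this requires differentiating the Fleming--Viot semigroup $T_tH$ in its measure argument and using a quantitative coupling/contraction estimate for the process to make the time integral converge with controlled constants, and it is also where one must verify that $\cA$ genuinely has the displayed form on $\cH_1\cup\cH_2$ and that $F_H$ possesses well-defined first, second and (for $\cH_2$) third $\mu$-derivatives, with a Lipschitz second derivative in the $\cH_1$ case. Once those ingredients are in place, the manipulation of the pair $(W,W')$ is essentially routine, though it must be carried out keeping the $\cH_1$ versus $\cH_2$ distinction straight throughout.
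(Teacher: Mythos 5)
Your proposal is correct and follows essentially the same route as the paper: the Fleming--Viot generator as Stein operator, the semigroup representation of $F_H$, a second-order Taylor expansion of $t\mapsto F_H(W+t(W'-W))$ combined with $\law(W)=\law(W')$ and the linearity condition to solve for the drift term, and the derivative bounds $D_i(H;\theta)$ to control the three resulting terms, with exactly the paper's $\cH_1$ versus $\cH_2$ split in the remainder. Your deferral of the hard work to the Stein-solution derivative estimates also mirrors the paper, whose proof of this theorem likewise quotes those bounds (its Theorem~\ref{thm:FVstnbds}) and a chain-rule lemma as separately established inputs.
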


To interpret the terms appearing in the bound, $D_i(H;\theta)$, $i=1,2,3$, are bounds on certain ``derivatives" of Stein solutions (see Section~\ref{sec:smdp} for more details) which are derived from the generator approach of~\cite{Barbour1988, Barbour1990, Gotze1991}; see also \cite{Reinert2005}. In particular, we characterize the Dirichlet process as the stationary distribution of a particular Fleming-Viot (FV) process; see \cite{Fleming1979}, \cite{Ethier1993}, and 
\cite[Section~5.2]{Feng2010}; the bounds are then obtained using couplings built from two probabilistic descriptions of the transition semigroup of the process, given in \cite{Ethier1993a} and \cite{Dawson1982}. In broad strokes, this is the same approach used in other developments of Stein's method for processes, but there are a number  technical and conceptual difficulties in our setting; e.g., coupling arguments require non-standard definitions for directional derivatives; which require careful treatment 
and new ideas to overcome. Deriving these bounds is a significant contribution of the paper.

The other terms in the bound  are interpreted in the context of the generator approach: Given~$W$, we think of $W'$ as a step in a Markov chain with dynamics that approximate those of the FV process at small times. The term $A_1$ measures the difference between the ``drift'' components of the Markov chain and the FV process, the term $A_2$ gives the error in the ``diffusion" coefficient of the two processes, and $A_3$ controls the contribution of higher order terms. 

\begin{remark}
The proof of Theorem~\ref{THM3} uses  Stein's method for distributional approximation, which was first developed in  \cite{Stein1972, Stein1986}, and has now found a large number of applications in probability and statistics; 
see \cite{Barbour1992}, \cite{Chen2011}, and \cite{Ross2011}
for various introductions, and \cite{Chatterjee2014} for a recent literature survey. 
In particular, the theorem follows a long line of Stein's method ``exchangeable pairs" 
approximation theorems for other distributions; e.g., normal \cite[Theorem~1.1]{Rinott1997}; multivariate normal \cite[Theorem~2.3]{Chatterjee2008} \cite[Theorem~2.1]{Reinert2009}; 
Poisson~\cite[Proposition~3]{Chatterjee2005}; 
translated Poisson~\cite[Theorem~3.1]{Rollin2007};
exponential \cite[Theorem~1.1]{Chatterjee2011}, \cite[Theorem~1.1]{Fulman2013}; beta \cite[Theorem~4.4]{Dobler2015}; Dirichlet \cite[Theorem~3]{Gan2017}; limits in Curie-Weiss models \cite[Theorem~1.1]{Chatterjee2011a}.
However note that, following \cite{Rollin2008}, our result does not require the coupling $(W,W')$ to be exchangeable, only that it has equal marginals. Also note that the references above are for finite dimensional distributions, while ours is an infinite dimensional, or process level, result. Stein's method for process approximation is much less developed, with the seminal ideas going back to \cite{Barbour1988, Barbour1990}. Since then, there has been good progress for Poisson process approximation; e.g., \cite{Chen2004}; but little else outside of some very recent activity, for certain diffusions and Gaussian processes; see 
\cite{Kasprzak2017a,Kasprzak2017c,Kasprzak2020} and \cite{Bourguin2019}.
\end{remark}
\begin{remark}
The assumption that~$\pi$ is diffuse is necessary for the masses of the random measure $\DP(\theta,\pi)$ to have the $\PD(\theta)$ distribution. If~$\pi$ has finite support, then the Poisson process representation of Definition~\ref{def:pd} and Poisson thinning implies that the resulting ordered masses have the ordered Dirichlet distribution, and an analog of Theorem~\ref{THM3} can be obtained from \cite[Theorem~3]{Gan2017}. If the support of~$\pi$ is countable, then approximation could perhaps be reduced to the Dirichlet case, by first combining the small masses of~$\pi$ (at some cost to the error). More complicated distributions for~$\pi$ would require at least a mixture of the ideas above, but since our main focus is $\PD(\theta)$ approximation, we will always assume that~$\pi$ is diffuse.
\end{remark}
 
We apply Theorem~\ref{THM3} in two settings. First, at the end of this section, we work out the bounds of the theorem for the random measure derived from i.i.d.\ $\pi$-distributed labeling of the (normalized) masses from the Ewens Sampling Formula (ESF). Since the ESF is the exact sampling distribution of $\PD(\theta)$, similar bounds can be obtained from direct couplings, but the example is computationally illustrative and serves as a proof of concept.
The second setting is a more significant application, bounding the error when approximating stationary distributions of discrete Wright-Fisher models with infinite alleles mutation structure by the Dirichlet process.

\subsection{The infinite alleles Wright-Fisher model}
The Wright-Fisher model, first defined in \cite{Wright1949}, is one of the most useful and well-studied models of a genealogy in population genetics. In the model, generations have a fixed population size $N$, and each generation dies and gives birth to a new generation at the same time (non-overlapping, discrete generations). Specifically, the genealogy evolves by each child in the next generation choosing a parent uniformly at random. On top of the genealogy, we give each individual a genetic ``type" or label, which we encode as an element from the compact metric space~$E$. 
The dynamics of the process are Markovian in the generations, with parameters $p:E\to[0,1]$ and $\{\kappa_x\}_{x\in E}\subseteq M_1$ (these can be combined, but it is clarifying in our results to separate them). Given the genealogy, if a child's parent is of type $x$, then the child is of type $x$ with probability $1-p(x)$,  and otherwise mutates to a type distributed as $\kappa_x$, with choices independent between children. 
We study the empirical  probability distribution of  types in a given generation when the process is stationary. These empirical measures determine the sampling distribution, which forms the basis for statistical inference under this model. Unfortunately these stationary distributions are intractable, even in the parent independent mutation (PIM) case where $p(x)$ and $\kappa_x$ do not depend on~$x$. The standard approach under PIM 
is to approximate the stationary distribution by the limiting distribution as $N\to\infty$: assuming $2Np\to\theta$ and $\kappa=\pi$, the limiting stationary distribution is exactly~$\DP(\theta,\pi)$ and then the sampling distribution is given by the ESF.
We use Theorem~\ref{THM3} to quantify this convergence in the following general approximation result.

\begin{theorem}\label{THM:WF}
For a fixed $N$, $p:E\to[0,1]$, and $\clc{\kappa_{x}}_{x\in E}\subseteq M_1$, let $W_N$ be distributed as a stationary empirical probability measure associated with the Wright-Fisher Markov chain described above, and let $K_N$ be the number of distinct types of $W_N$. For any $\theta > 0$, and diffuse measure $\pi\in M_1$, let $Z \sim \DP(\theta, \pi)$. Then for any $H \in \cH_1 \cup \cH_2$, 
\ba{
\abs{ \E H(W_N) - \E H(Z) } \leq \frac12 \left\{D_1(H;\theta) A_1 + D_2(H;\theta) A_2 + D_3(H;\theta) A_3\right\},
}
where $D_i(H;\theta)$, $i=1,2,3$, are given in Definition~\ref{DEF:D}, and
\ba{
A_1 &=4N \sup_{x\in  E} \babs{ p(x) - \tsfrac{\theta}{2N}} + \theta \sup_{x\in E} \norm{\kappa_x - \pi},\\
A_2 &= 4 \norm{p}_\infty \bclr{N \norm{p}_\infty + 3  }, \\
A_3 &= \frac{\IE\bcls{K_N^{3/2}}}{3N^{1/2}}\bbclr{\sqrt{2}+(2) 14^{1/3} \bcls{\clr{N\norm{p}_\infty}^3 + N \norm{p}_{\infty}}^{1/3}}^{3}.
}
\end{theorem}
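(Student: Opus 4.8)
The plan is to apply Theorem~\ref{THM3} with the coupling $(W_N,W_N')$ obtained by running the Wright--Fisher chain one generation, so $W_N$ is the stationary empirical type-measure of a generation and $W_N'$ that of the next; stationarity gives $\law(W_N')=\law(W_N)$, as required. The one fact driving every estimate is that, conditionally on the parent generation (equivalently, on $W_N$), the $N$ offspring types are i.i.d.\ with common law
\be{
\mu:=\int_E\bklr{(1-p(y))\delta_y+p(y)\kappa_y}\,W_N(dy),
}
so $W_N'$ is the empirical measure of $N$ i.i.d.\ $\mu$-draws and $\mu-W_N=\int_E p(y)(\kappa_y-\delta_y)\,W_N(dy)$, a signed measure with $\norm{\mu-W_N}\le2\norm p_\infty$. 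All conditional expectations below are given~$W_N$.

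For the drift term, $\IE[W_N'-W_N|W_N]=\mu-W_N$; we take $\lambda=\tsfrac1{2N}$, the value matching the exactly-tuned parent-independent case, so $R=(\mu-W_N)+\tsfrac\theta{2N}(W_N-\pi)$, which equals $[\tsfrac\theta{2N}-p(x)]W_N(dx)+\int_E[p(y)\kappa_y(dx)-\tsfrac\theta{2N}\pi(dx)]\,W_N(dy)$. Bounding $\bnorm{p(y)\kappa_y-\tsfrac\theta{2N}\pi}\le\babs{p(y)-\tsfrac\theta{2N}}+\tsfrac\theta{2N}\norm{\kappa_y-\pi}$ (add and subtract $\tsfrac\theta{2N}\kappa_y$) gives $\norm R\le2\sup_x\babs{p(x)-\tsfrac\theta{2N}}+\tsfrac\theta{2N}\sup_x\norm{\kappa_x-\pi}$, and multiplication by $1/\lambda=2N$ yields $A_1$. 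For the diffusion term, write $W_N'-W_N=A+B$ with $A:=\tfrac1N\sum_{i=1}^N(\delta_{Z_i}-\mu)$ (the $Z_i$ i.i.d.\ $\mu$) and $B:=\mu-W_N$; independence and $\IE[A|W_N]=0$ give $\IE[(W_N'-W_N)^{\otimes2}|W_N]=\tfrac1N(\mu^{*2}-\mu^{\otimes2})+(\mu-W_N)^{\otimes2}$, using $\IE[(\delta_Z-\mu)^{\otimes2}]=\mu^{*2}-\mu^{\otimes2}$ for $Z\sim\mu$. Since $\tfrac1{2\lambda}=N$, the signed measure inside $A_2$ is $(W_N^{*2}-\mu^{*2})-(W_N^{\otimes2}-\mu^{\otimes2})-N(\mu-W_N)^{\otimes2}$, and the bounds $\norm{W_N^{*2}-\mu^{*2}}=\norm{W_N-\mu}$, $\norm{W_N^{\otimes2}-\mu^{\otimes2}}\le2\norm{W_N-\mu}$, $\norm{\mu-W_N}\le2\norm p_\infty$ give $A_2$ of the claimed form $4\norm p_\infty(N\norm p_\infty+3)$.

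For the higher-order term, $\tfrac1{6\lambda}=\tfrac N3$, and since $\norm{\nu^{\otimes k}}=\norm\nu^k$ for a product signed measure~$\nu$, both cases of $A_3$ coincide and equal $\tfrac N3\IE\bkle{\norm{W_N'-W_N}^3}$. To bound $\IE\bkle{\norm{W_N'-W_N}^3|W_N}$ we split the offspring into non-mutators and mutators: since parent choice is independent of mutation, the chosen parents' types $Y_{J_1},\dots,Y_{J_N}$ are i.i.d.\ $W_N$, and
\be{
W_N'-W_N=\underbrace{\Bigl(\tfrac1N\textstyle\sum_{i=1}^N\delta_{Y_{J_i}}-W_N\Bigr)}_{=:A_0}+\tfrac1N\sum_{i\in S}(\delta_{\tilde Z_i}-\delta_{Y_{J_i}}),
}
where $S$ is the random set of mutators, $|S|=M$, and $\tilde Z_i$ the mutant types, so the mutation correction has total-variation norm at most $2M/N$. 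Given $W_N$, $M\sim\Bi(N,\bar p)$ with $\bar p=\int p\,dW_N\le\norm p_\infty$, so an explicit binomial third-moment bound gives $\IE[M^3|W_N]\le14\bklr{(N\norm p_\infty)^3+N\norm p_\infty}$; and $\norm{A_0}=\sum_x\babs{W_N'(\{x\})-W_N(\{x\})}$, the sum over the at most $K_N$ atoms of $W_N$, satisfies $\IE[\norm{A_0}^3|W_N]\le2\sqrt2\,(K_N/N)^{3/2}$ by Minkowski's inequality in $L^3$, Cauchy--Schwarz over the atoms (which turns $\sum_x W_N(\{x\})^{1/2}$ into $K_N^{1/2}$) and binomial second/fourth central-moment estimates. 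Combining the two contributions by Minkowski's inequality in $L^3(\cdot|W_N)$ and using $NK_N\ge1$ to absorb a harmless factor gives
\be{
\bklr{\IE[\norm{W_N'-W_N}^3|W_N]}^{1/3}\le\frac{K_N^{1/2}}{N^{1/2}}\Bigl(\sqrt2+2\cdot14^{1/3}\bkle{(N\norm p_\infty)^3+N\norm p_\infty}^{1/3}\Bigr);
}
cubing, taking expectations and multiplying by $\tfrac N3$ produces $A_3$, and inserting $A_1,A_2,A_3$ into Theorem~\ref{THM3} finishes the proof.

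The main obstacle is this last step: the set $S$ of mutators is random and, through the type-dependence of $p$, correlated with the sampled parents, so the mutation contribution must be peeled off carefully; and the $L^3$ moment bound for the pure resampling fluctuation $A_0$ must be carried through the random number $K_N$ of atoms, treating small- and large-mass atoms separately so that the Cauchy--Schwarz step is legitimate and the binomial moment constants stay explicit. By contrast, the steps producing $A_1$ and $A_2$ are essentially bookkeeping once the conditional i.i.d.\ description of the offspring is available.
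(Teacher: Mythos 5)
Your proposal is correct and follows the same skeleton as the paper's proof: apply Theorem~\ref{THM3} to the pair $(W_N,W_N')$ given by one step of the chain, with $\lambda=1/(2N)$ and the same remainder $R$ (so the same $A_1$), and then control $A_2$ and $A_3$ by separating the resampling and mutation contributions. Where you genuinely depart is in how the conditional moments are organized. For $A_2$, the paper conditions on the multinomial offspring counts $(M_i)\sim\MN\bclr{N,N^{-1}(N_1,\dots,N_K)}$ and the per-type mutation counts $B_i\sim\Bi(M_i,p(x_i))$ and grinds through the multinomial covariance formula term by term; your observation that, given $W_N$, the $N$ offspring are i.i.d.\ with law $\mu=\int_E[(1-p(y))\delta_y+p(y)\kappa_y]\,W_N(dy)$ collapses this to the one-line identity $\IE[(W_N'-W_N)^2\,|\,W_N]=\tfrac1N(\mu^{*2}-\mu^2)+(\mu-W_N)^2$, which is cleaner and even gives the marginally better constant $4\norm{p}_\infty(N\norm{p}_\infty+3/2)$. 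For $A_3$, the paper expands $(W'-W)^k$ into mixed products and controls triple sums over type indices via H\"older and the per-type binomials $B_i$; you instead bound the entire mutation correction in variation norm by $2M/N$ with $M\sim\Bi(N,\bar p)$ the aggregate mutation count and apply Minkowski in $L^3$, which is lighter bookkeeping and lands on the identical bound. The resampling piece (Lyapunov/Minkowski, the binomial fourth central moment, and Cauchy--Schwarz over the at most $K_N$ atoms to produce $K_N^{3/2}$) and the use of $NK_N\ge1$ to absorb the mutation term into the $K_N^{1/2}/N^{1/2}$ prefactor coincide with the paper's argument. I see no gaps; the only cosmetic quibble is that in your $A_3$ decomposition the quantity you call $\norm{A_0}$ should refer to the pre-mutation empirical measure rather than $W_N'$ itself, and the anticipated need to treat small- and large-mass atoms separately never materializes, since $\sum_{i\le K_N}N_i^{1/2}\le K_N^{1/2}N^{1/2}$ holds unconditionally.
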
 

\begin{remark}
The bound is completely explicit except for $\IE\bcls{K_N^{3/2}}$, which can be handled in two ways. First, Theorem~\ref{thm:KN2mombd} below implies that the term is bounded of order 
\be{
\log(N)^{3/2}\bclr{ 1+N \norm{p}_{\infty}+\clr{N \norm{p}_{\infty}}^2}^{3/4},
}
where the constant is quite large, but explicit and improvable for given $N$ and $\norm{p}_{\infty}$ (by following the proof of Theorem~\ref{thm:KN2mombd}). 
Thus, in the regime of convergence to $\DP(\theta,\pi)$ where (necessarily under PIM) $\sup_{x\in E}\abs{2Np(x)- \theta}\to 0$ and $\sup_{x\in E} \norm{\kappa_x - \pi}\to 0$,  the bound is of order
\be{
\sup_{x\in E}\abs{2Np(x)- \theta}+\sup_{x\in E} \norm{\kappa_x - \pi} +\frac{\log(N)^{3/2}}{N^{1/2}}.
}
Second, for any fixed value of $N$ and $\norm{p}_\infty$, the recursive formulas for the distribution of the number of types for the PIM case with mutation rate $\norm{p}_\infty$ (our proof shows that the number of types in this case dominates the general case) are available from \cite{Lessard2007, Lessard2010}, and these can be used to numerically compute $\IE[K_N^{3/2}]$. The recursive formulas do not seem usable to obtain an explicit analytic bound like that in Theorem~\ref{thm:KN2mombd}.
\end{remark}

\begin{remark}
Theorem~\ref{THM:WF}~applies only if a stationary distribution exists, which needs to be verified separately. Conditions for the existence 
of a stationary distribution can be adapted from the general theory given in, e.g., \cite{Meyn1993}. For example, if there is a probability
measure $\nu\in M_1$ and $\eta >0$ such that $p(x)\kappa_x(A)\geq \eta \nu(A)$ for all Borel sets $A\subseteq E$ and $x\in E$, then the chain is Harris recurrent since 1) there is a uniformly bounded away from zero chance at every step to move to the set $B$ of empirical measures with support a subset of the support of $\nu$ (i.e., all children are mutants with type sampled from~$\nu$), and 2)
the chain regenerates since it is possible from any state to mix according to the PIM-$\nu$ dynamics to the PIM-$\nu$ stationary distribution. Since the time between visits to $B$ has finite mean, the chain has a stationary distribution; see \cite[Section~2]{Baxendale2011} for a clear exposition of the argument we are using.
Alternatively, if $p$ is continuous and $\kappa_x$ is continuous with respect to weak convergence as a function of $x$, then the chain is Feller on
a compact state space, and thus has a stationary distribution.
We omit the details for the sake of space, and leave it to practitioners to ensure a stationary distribution exists before applying the theorem.
\end{remark}

As previously mentioned, perhaps the most useful convergence in the population genetics setting is for sampling formulas. 
Assuming a random measure $W$ is purely atomic  (as it is in our application), then we are interested in the distribution of the partition induced by a simple random sample from~$W$. Specifically, denoting the sample by $(y_1,\ldots, y_n)$, we write $\cS_n(W)$ for the probability measure on set partitions of $\{1,\ldots,n\}$ induced by the relation $i\sim j \iff y_i=y_j$. As is well-known, the law of $\cS_n(Z)$ for $Z\sim\DP(\theta,\pi)$ can be read from the ESF; see Section~\ref{sec:ESF} below.
We have the following corollary for the PIM Wright-Fisher model, which follows easily from Theorem~\ref{THM:WF} and Lemma~\ref{lem:smoothsamp}, stated and proved below.

\begin{corollary}\label{cor:wfsamp}
For a fixed $N$, $p(\cdot)\equiv \theta/(2N)\in(0,1)$ and $\kappa_{x}:=\pi$ for $x\in E$, where $\pi\in M_1$ is diffuse, let $W_N$ be distributed as a stationary empirical probability measure associated with the Wright-Fisher Markov chain described above. Then for $Z\sim\DP(\theta,\pi)$, 
\bes{
\dtv\bclr{\cS_n(W_N), \cS_n(Z)} &\leq  \bbbclr{\frac{2n}{\theta} + \frac{2n(n-1)}{\theta+1}}A_2 \\
	&\qquad +\bbbclr{\frac{2 n}{\theta} + \frac{6 n(n-1)}{\theta+1} +
\frac{8 n(n-1)(n-2)}{3(\theta+2)}}A_3,
}
where $A_2$ and $A_3$ are defined in Theorem~\ref{THM:WF}, with $A_2=\bigo(N^{-1})$ and $A_3=\bigo(N^{-1/2}\log(N)^{3/2})$.
\end{corollary}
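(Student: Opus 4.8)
The plan is to derive Corollary~\ref{cor:wfsamp} as a specialization of Theorem~\ref{THM:WF} combined with the ``smoothing'' Lemma~\ref{lem:smoothsamp}. The mechanism is as follows: $\cS_n(\mu)$ depends on $\mu$ only through the sizes of its atoms, and the probability of a given partition is a polynomial in the atom masses, hence expressible as $\angle{\phi,\mu^k}$ for suitable $\phi\in\rC(E^k)$ (with $\pi$ diffuse so that distinct labels are almost surely distinct). Concretely, for a partition with blocks of sizes $b_1,\dots,b_r$ (so $\sum b_i = n$), the probability that a size-$n$ sample from $\mu=\sum_j m_j\delta_{z_j}$ induces that partition is a sum over injections of $\prod_i m_{j_i}^{b_i}$, and this is (up to combinatorial constants and lower-order diagonal corrections that vanish as $\pi$ is diffuse) a test function in $\cH_2$ for $k=n$. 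Lemma~\ref{lem:smoothsamp} should be exactly the statement that converts the $\cH_2$-bound of Theorem~\ref{THM:WF} into a total variation bound on $\cS_n$, i.e., it supplies the values of $D_2(H;\theta)$ and $D_3(H;\theta)$ (in the $\cH_2$ case) when $H$ ranges over the relevant family of sampling test functions, yielding the displayed coefficients $2n/\theta + 2n(n-1)/(\theta+1)$ and $2n/\theta + 6n(n-1)/(\theta+1) + 8n(n-1)(n-2)/(3(\theta+2))$.

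The key steps, in order, are: (1) Invoke Theorem~\ref{THM:WF} in the PIM regime $p\equiv\theta/(2N)$, $\kappa_x\equiv\pi$; here $A_1=0$ since $p(x)-\theta/(2N)=0$ and $\norm{\kappa_x-\pi}=0$, so only the $A_2$ and $A_3$ terms survive. (2) Record the orders: $\norm{p}_\infty=\theta/(2N)$, so $N\norm{p}_\infty=\theta/2$ is a constant, giving $A_2 = 4\cdot\frac{\theta}{2N}(\frac\theta2+3) = \bigo(N^{-1})$, and $A_3 = \frac{\IE[K_N^{3/2}]}{3N^{1/2}}(\sqrt2 + 2\cdot 14^{1/3}((\theta/2)^3+\theta/2)^{1/3})^3$, which with the $\IE[K_N^{3/2}]=\bigo(\log(N)^{3/2})$ estimate (from Theorem~\ref{thm:KN2mombd}, noting the $N\norm p_\infty$ factors are constant) gives $A_3=\bigo(N^{-1/2}\log(N)^{3/2})$. (3) Apply Lemma~\ref{lem:smoothsamp} to pass from $\sup_{H}\abs{\E H(W_N)-\E H(Z)}$ over the sampling test functions to $\dtv(\cS_n(W_N),\cS_n(Z))$; this requires bounding, uniformly over the partition-indicator test functions $H$, the constants $D_2(H;\theta)$ and $D_3(H;\theta)$ from Definition~\ref{DEF:D}, and summing/collecting them into the stated $n$-dependent coefficients. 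The factors $1/\theta$, $1/(\theta+1)$, $1/(\theta+2)$ are the hallmark of the ESF falling-factorial normalizations $\theta^{(n)}=\theta(\theta+1)\cdots(\theta+n-1)$, and should emerge from the explicit form of $D_2,D_3$ evaluated on these functions.

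The main obstacle is Step (3): showing that the Stein-solution derivative bounds $D_2(H;\theta)$, $D_3(H;\theta)$, when specialized to the (bounded, piecewise-polynomial) sampling test functions, assemble into coefficients that are polynomial in $n$ with exactly the claimed $\theta$-dependence. This is where Lemma~\ref{lem:smoothsamp} does the real work: one must verify that the map $\mu\mapsto\cS_n(\mu)$ is ``smooth enough'' in the sense required by $\cH_2$ (controlling first, second, and third directional derivatives of $\mu\mapsto\angle{\phi,\mu^n}$ along the perturbations $W'-W$), and that the diffuseness of $\pi$ lets one ignore the diagonal terms where sampled labels coincide by chance rather than by the partition structure. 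The orders $A_2=\bigo(N^{-1})$, $A_3=\bigo(N^{-1/2}\log(N)^{3/2})$ are then immediate bookkeeping once Theorem~\ref{THM:WF} and the $K_N$ moment bound are in hand, so essentially all the content is in correctly quoting and applying Lemma~\ref{lem:smoothsamp}.
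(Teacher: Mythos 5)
Your proposal is correct and follows exactly the paper's route: the corollary is obtained by specializing Theorem~\ref{THM:WF} to the PIM case (where $A_1=0$), feeding the resulting $\cH_2$-bound with $B_2=A_2/2$, $B_3=A_3/2$ into Lemma~\ref{lem:smoothsamp} (whose hypothesis on $W_N$ holds since the PIM masses are independent of the i.i.d.\ $\pi$-labels), which produces precisely the coefficients $\tfrac12\wh D_2(n;\theta)$ and $\tfrac12\wh D_3(n;\theta)$, and then reading off the orders from $N\norm{p}_\infty=\theta/2$ and Theorem~\ref{thm:KN2mombd} via Jensen.
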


The corollary implies that if $n\ll N^{1/6}\log(N)^{-1/2}$, then the ESF probabilities are good approximations for those of an $n$-sample from  the PIM Wright-Fisher model with $N$ individuals. This kind of approximation result has important implications for modern genetic studies, since cheaper and faster sequencing has made sample sizes large relative to effective populations size; see, e.g., \cite{Bhaskar2014} and \cite{Fu2006}. The asymptotic range of convergence of the sample size $n\ll N^{1/6}\log(N)^{-1/2}$ is likely not optimal (even without the log factor). The powers of $n$ appearing in the bound of the corollary stem from the number of derivatives of  the ``test'' functions~$H$ appearing in Definition~\ref{DEF:D}, and this may be improvable, leading to larger range of convergence with our methods. The proof of Lemma~\ref{lem:numedges} suggests that the genealogy of an $n$-sample in the PIM Wright-Fisher model is close in some sense to a discretized version of the coalescent when $n\ll \sqrt{N}$, but translating this heuristic into a bound for the total variation distance of the corollary is not straightforward. 
It is an interesting open problem to determine a sharp asymptotic threshold of the growth of $n$ with $N$, for convergence to zero of this total variation distance.

Theorem~\ref{THM:WF} and Corollary~\ref{cor:wfsamp} require a bound on $\IE\bcls{K_N^{3/2}}$ (or any higher moment via Jensen's inequality). Though simpler than the full stationary distribution, even this quantity is difficult to handle, and there do not appear to be any rigorous bounds or even asymptotics in the literature. Heuristically, in the PIM case with mutation probability of order $1/N$, $K_N$ should be of order $\log(N)$, since, looking backward in time, the genealogy should quickly collapse to a small power of $N$ without accumulating too many types, and then we can understand the number of types via the ESF for a sample of the small power of $N$ size, which predicts $\log(n)$ types in a sample of size $n$. Making this precise is difficult, but we are able to show the following result, whose proof verifies the heuristic.

\begin{theorem}\label{thm:KN2mombd}
Let $N\geq 3$ and $K_N$ be the stationary number of types in an infinite alleles Wright Fisher model with maximum probability of mutation $\norm{p}_\infty$. Then 
\be{
\IE \bcls{K_N^2} \leq \log(N)^2 \bclr{4+ \bclr{12\times 10^3} N\norm{p}_\infty+\bclr{ 6\times 10^6} (N\norm{p}_\infty)^2}. 
}
\end{theorem}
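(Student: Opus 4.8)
The plan is to bound $\E[K_N^2]$ by a coupling/comparison argument that first reduces to the parent-independent mutation (PIM) case with the worst-case mutation probability $\|p\|_\infty$, then controls the number of types in this PIM model via a backward (coalescent-type) analysis of the genealogy, and finally invokes the Ewens Sampling Formula once the ancestral process has collapsed to a manageable size.

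First I would establish the stochastic domination: the number of distinct types in the general infinite-alleles Wright-Fisher model is stochastically dominated by the number of types in the PIM model with constant mutation probability $\|p\|_\infty$ (since each individual mutating to a fresh type can only create more distinct types, and raising $p(x)$ to $\|p\|_\infty$ uniformly only increases mutation events; one couples the two chains so that every mutant in the general model is also a mutant in the PIM model, and fresh types in the PIM model are at least as numerous). This lets me henceforth assume the PIM model with mutation probability $q := \|p\|_\infty$. Next, for the PIM model, I would set up the backward picture: trace the ancestral lineages of the $N$ individuals in a stationary generation, using the standard Wright-Fisher coalescent structure where at each backward step lineages coalesce according to uniform parent choices; a type at the present generation is ``new'' precisely when a mutation occurs along some ancestral lineage before that lineage coalesces with a lineage carrying an already-counted type, so the number of types $K_N$ is at most the number of mutation events encountered while following the ancestral lineages back until the last coalescence. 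I would bound the expected number of such mutations, and its second moment, by splitting the backward time into a first phase (many lineages, rapid coalescence, during which $\E[(\text{number of lineages})]$ drops quickly, so by $O(\log N)$ generations only $O(\text{small power of }N)$ lineages remain) and a second phase (few lineages, where the number of accumulated mutations is controlled by the ESF for a sample whose size is that small power of $N$, giving an $O(\log N)$ contribution with controlled variance).

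The quantitative heart is: (i) show that after $t$ backward generations the number of distinct ancestral lineages $A_t$ satisfies $\E[A_t \mid A_0 = N]$ is small — roughly $\E[A_{t+1}\mid A_t]\le A_t - \binom{A_t}{2}/N$ type recursions, which gives $\E[A_t]\lesssim N/t$ for $t\le N$, so that $\E[A_{c\log N}]$ is polynomially small; (ii) bound the number of mutations in the first phase: each of the (at most $N$) lineages over (at most $O(\log N)$) generations mutates with probability $q$, but more carefully the expected total is $\sum_t q\,\E[A_t]\lesssim q\log N \cdot N$ contributions, carefully accounting so the leading term matches $N q (\log N)$; (iii) in the second phase, condition on the number of surviving lineages $m$ (a small power of $N$) and use that the number of types among $m$ lineages in the remaining backward coalescent until full coalescence, each mutating at scaled rate $\approx 2Nq$, is governed by an ESF-type formula with $\theta_{\mathrm{eff}} \asymp N q$, contributing $O((1+Nq)\log m) = O((1+Nq)\log N)$ in expectation with second moment of the same order squared; (iv) combine via $K_N \le M_1 + M_2$ where $M_i$ are the mutation counts in the two phases, so $\E[K_N^2]\le 2\E[M_1^2]+2\E[M_2^2]$, and assemble the explicit constants. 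Throughout, the second-moment bounds require a bit more than the first moments: for the first phase one needs concentration of $\sum_t A_t$ (or a crude $A_t\le N$ bound for the square of a sum of $O(\log N)$ non-negative terms each of size $O(Nq)$ on average), and for the ESF phase one uses the known variance formula for the number of types in the ESF, which is also $O(\theta\log n)$.

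The main obstacle I expect is making the backward genealogical argument rigorous and explicit simultaneously: the Wright-Fisher coalescent is not the Kingman coalescent (coalescences of more than two lineages at once are possible, and the per-generation coalescence probabilities are only approximately $\binom{k}{2}/N$), so carefully bounding $\E[A_t]$ and the tail of the first phase with fully explicit constants — while also correctly bookkeeping how a mutation on a lineage that later coalesces with an already-typed lineage should \emph{not} be counted, so that one gets an upper bound for $K_N$ rather than an overcount that is too lossy — takes care. Relatedly, getting the constants down to the stated $4$, $12\times 10^3$, $6\times 10^6$ range (rather than something astronomically larger) will require choosing the phase-transition time $t^\ast\asymp\log N$ and the intermediate lineage count $m = N^{\alpha}$ with some optimization, and tracking the dependence on $Nq=N\|p\|_\infty$ linearly and quadratically as it appears in the ESF phase. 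The reduction to PIM and the ESF variance input are comparatively routine; the delicate part is the uniform-in-$N$, uniform-in-$\|p\|_\infty$ control of the coalescent collapse with honest constants.
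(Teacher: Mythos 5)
Your high-level reduction is the same as the paper's: bound $K_N$ above by a constant plus the number of mutation events in the backward genealogy, so that $K_N$ is stochastically dominated by $2+\Bi(E,\norm{p}_\infty)$ with $E$ the total number of ancestor-individuals back to two lineages, and then control the first two moments of $E$. (The reduction to worst-case mutation probability needs no careful PIM coupling: each individual in the genealogy mutates with probability at most $\norm{p}_\infty$, and every current type either descends unmutated from one of the two final ancestors or was created by some mutation, so the crude count of all mutations already gives the upper bound.) However, there is a genuine quantitative error at the heart of your phase decomposition. The number of ancestral lineages $A_t$ does \emph{not} become polynomially small after $O(\log N)$ backward generations: the recursion $\IE[A_{t+1}\mid A_t]\approx A_t-\binom{A_t}{2}/N$ gives harmonic, not geometric, decay, $\IE[A_t]\approx 2N/(t+2)$, so reaching $N^{\alpha}$ lineages takes order $N^{1-\alpha}$ generations and full coalescence takes order $N$ generations. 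The factor $\log(N)$ in the final bound comes from
\be{
\IE[E]\;\approx\;\sum_{k=2}^{N}k\cdot\IE\bcls{\#\{\text{generations with }k\text{ lineages}\}}\;\approx\;\sum_{k=2}^{N}k\cdot\frac{N}{\binom{k}{2}}\;\approx\;2N\log N,
}
not from the number of generations being $O(\log N)$. This collapses your second-moment argument for the first phase, which squared a sum of only $O(\log N)$ terms; with the correct $\Theta(N)$ generation count, the crude bound $A_t\leq N$ yields $\IE[E^2]=O(N^4)$, far too large for the claimed $O((N\log N)^2)$.

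A second, smaller gap: your step (iii) invokes the ESF and its variance formula for the tail of the genealogy, but the discrete Wright--Fisher genealogy is not the Kingman coalescent and its stationary sampling distribution is not exactly the ESF --- quantifying that discrepancy is precisely the content of the paper's Corollary~\ref{cor:wfsamp}, so using the ESF here without error control reproduces the circularity the paper criticises in the Karlin--McGregor heuristic. The paper sidesteps both issues by working directly with the backward Markov chain of ancestor counts: it bounds $\IE[\tau_{x,y}^2]$, the squared number of generations with ancestor count in $(x,y]$, by comparison with a renewal process whose increments dominate (from below) the number of boxes receiving at least two balls in an occupancy scheme, together with a concentration inequality for such occupancy counts; it then bounds $\IE[E^2]$ by cutting $(2,N]$ at $(N/\log N)^{1/2}$ and $(3N)^{1/2}$, further subdividing the top range into intervals each capturing roughly one jump of the chain, and assembling via Cauchy--Schwarz. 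You would need to supply some mechanism of this kind for the second moment of $E$ before your outline becomes a proof.
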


\begin{remark}
Regarding related work, even in the case of parent independent mutation (PIM), which we do not assume, the result is new. However, there is some related work which we now discuss. If $p(x)=\theta/2N$, then \cite[(3.92)]{Ewens2004} shows that $\IE[K_N] = \bigo(\log(N))$, with no explicit constant. The approach there is to relate the expectation to the expected fixation time of a single allele via ergodic considerations; see \cite[(3.91)]{Ewens2004}; and then approximate this quantity by the limiting Wright-Fisher diffusion approximation. Without good control of the errors, it is not possible to extract explicit constants using this method, and moreover, it is unclear how to extend it to the second moment.

Still assuming $p(x)=\theta/2N$, \cite[p.~434, Case 2]{Karlin1967} argue that $\IE[K_N]=\bigo(\log(N))$
by first discretizing the type space into $r$ types, and then approximating the discrete Wright Fisher stationary distribution by an appropriate Dirichlet distribution. However, the first approximation is good for $r$ large and~$N$ fixed, while the second is good for fixed~$r$ and large~$N$, and they give no argument to control the errors in these approximations. Thus the argument is incomplete (and we do not see a simple way to close the gap), so their result is taken as a heuristic, which our lemma confirms.

\end{remark}

\begin{remark}
The constants in the upper bounds of  Theorem~\ref{thm:KN2mombd} can be greatly improved 
 by assuming larger minimal values of $N$, or even just being more careful in the proofs, where we strive for clarity over obtaining good constants. 
\end{remark}

\begin{remark}
Applying Theorem~\ref{THM3} to the Cannings genealogy (where the offspring distribution is only assumed to be exchangeable) with PIM would yield an analog of Theorem~\ref{THM:WF}; cf.\ \cite[Theorem~2]{Gan2017}. However, it is not at all clear how to obtain an analog of Theorem~\ref{thm:KN2mombd}, bounding the relevant moments of $K_N$.
\end{remark}

The organization of the paper is as follows. We conclude the introduction with a simple application of Theorem~\ref{THM3} for the Ewens sampling formula, followed by a discussion of the implicit metrics of the test functions we use. In Section~\ref{sec:smdp} we develop Stein's method for the Dirichlet process, proving Theorem~\ref{THM3}, and then in Section~\ref{sec:wf} we prove the Wright-Fisher approximation Theorem~\ref{THM:WF} and Theorem~\ref{thm:KN2mombd}, bounding the second moment for the number of types. 

\subsection{Example: The Ewens Sampling Formula}\label{sec:ESF}
For $\theta>0$, the ESF$(\theta)$ distribution is a probability distribution on integer partitions arising through the sampling distribution of~$\PD(\theta)$, and so occurs in all the contexts and references mentioned in the first paragraph of this article.
By sampling distribution we mean that if~$n$  samples $(y_1,\ldots,y_n)$ are drawn i.i.d.\ from $\DP(\theta,\pi)$ for $\pi$ diffuse, then the 
distribution of the partition of $\{1,\ldots,n\}$ generated by the equivalence relation $i\sim j \iff y_i=y_j$ can be read from the ESF$(\theta)$, which can be described in terms of the number of labels in the sample appearing~$i$ times, $i=1,\ldots,n$; see, e.g., \cite[(4.5)]{Arratia2003} or \cite[(2.20)]{Pitman2006}. For our purposes, we retain the labels of the sample and view the ESF as giving the distribution of masses of an empirical measure 
in $M_1$. In particular, sampling sequentially, and letting $x_i\in E$, $i=1,2,\ldots,$ be the label of the $i$th unique label to appear in the sampling sequence, 
we define
\ben{\label{eq:dpsampcrp}
W_n:=\frac{1}{n}\sum_{i=1}^{n}  \delta_{x_{\sigma_i}},
}
where
$x_{\sigma_i}=y_i$ is the label of the $i$th sample (note that it is an eventuality that $\sigma_i=\sigma_j$ for some $i\not=j$).
As is well-known, e.g., from \cite[Section~3.2]{Pitman2006}, the dynamics of the un-normalized masses of this process follow a one parameter Chinese Restaurant Process, and in particular, given $W_{i-1}$, the  distribution of~$x_{\sigma_{i}}$ is 
\ben{\label{eq:crpsamp}
\frac{i-1}{i+\theta-1} W_{i-1}+ \frac{\theta}{i+\theta-1} \pi.
}
We now state the following result.



\begin{proposition}\label{CRP}
Let $\theta>0$ and $W_n$ be distributed as the empirical measure of an $n$-sample from~$\DP(\theta,\pi)$ for~$\pi$ diffuse, as defined at~\eq{eq:dpsampcrp}, and $Z \sim \DP(\theta, \pi)$. Then for any $H \in \cH_1 \cup \cH_2$,
\ba{
\babs{ \E H(W_n) - \E H(Z) } \leq D_2(H;\theta) \frac{\theta}{n} +D_3(H;\theta)\frac{2(n+\theta-1)}{3n^2},
}
where $D_i(H;\theta)$, $i=2,3$, are given in Definition~\ref{DEF:D}.
\end{proposition}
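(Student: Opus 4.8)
The plan is to apply Theorem~\ref{THM3} with $W=W_n$ and a partner $W'=W_n'$ obtained by resampling one coordinate of the underlying Chinese Restaurant Process sample, and then to evaluate the three error terms explicitly. Concretely, write $W_n=\frac1n\sum_{i=1}^n\delta_{y_i}$ for the sequential sample of \eqref{eq:dpsampcrp}, draw $y_n'$ from the conditional law \eqref{eq:crpsamp} (with $i=n$) given $y_1,\dots,y_{n-1}$ but independently of $y_n$, and set $W_n':=W_n-\frac1n\delta_{y_n}+\frac1n\delta_{y_n'}$. Since $y_n'$ has the same conditional distribution given $(y_1,\dots,y_{n-1})$ as $y_n$ does, $\law(W_n')=\law(W_n)$; and conditionally on $\sigma(y_1,\dots,y_{n-1})$ the points $y_n,y_n'$ are i.i.d., so $(W_n,W_n')$ is exchangeable (in any case, only equal marginals are needed for Theorem~\ref{THM3}).

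Next I would pin down the Stein data. Using exchangeability to get $\IE[\delta_{y_n}\mid W_n]=W_n$, and averaging \eqref{eq:crpsamp} over $y_n$ (which, given $W_n$, is a uniformly chosen atom of $nW_n$) to get $\IE[\delta_{y_n'}\mid W_n]=\frac{n-1}{n+\theta-1}W_n+\frac{\theta}{n+\theta-1}\pi$, one finds
\[
\IE[W_n'-W_n\mid W_n]=-\frac{\theta}{n(n+\theta-1)}\,(W_n-\pi),
\]
so \eqref{7} holds with $\lambda=[n(n+\theta-1)]^{-1}$ and $R\equiv0$; hence $A_1=\frac1\lambda\IE\norm{R}=0$, which is why no $D_1$ term appears.

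The term $A_2$ is the crux. Since $W_n'-W_n=\frac1n(\delta_{y_n'}-\delta_{y_n})$, expanding the tensor square $(W_n'-W_n)^{2}$ gives four point masses at $(y_n',y_n')$, $(y_n',y_n)$, $(y_n,y_n')$, $(y_n,y_n)$, and I would take conditional expectations of each using \eqref{eq:crpsamp} together with the linearity of the ``diagonal'' map $\mu\mapsto\mu^{*2}$ (a pushforward). After substituting $\lambda$ one gets
\[
\tfrac{1}{2\lambda}\IE\bigl[(W_n'-W_n)^{2}\mid W_n\bigr]=\bigl(W_n^{*2}-W_n^{2}\bigr)+\frac{\theta}{2n}\bigl(W_n^{*2}+\pi^{*2}-\pi\otimes W_n-W_n\otimes\pi\bigr),
\]
so the leading pieces cancel in the definition of $A_2$ and
\[
A_2=\frac{\theta}{2n}\,\IE\bnorm{W_n^{*2}+\pi^{*2}-\pi\otimes W_n-W_n\otimes\pi}.
\]
Here is where diffuseness of $\pi$ enters: $W_n^{*2}$ and $\pi^{*2}$ are carried by the diagonal of $E^2$, while $\pi\otimes W_n$ and $W_n\otimes\pi$ give it no mass, and all four are pairwise mutually singular, so the variation norm above equals $1+1+1+1=4$ and $A_2=2\theta/n$, producing the $D_2(H;\theta)\,\theta/n$ term via $\frac12D_2A_2$.

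Finally $\norm{W_n'-W_n}\le 2/n$ deterministically, whence $\norm{(W_n'-W_n)^{2}}=\norm{W_n'-W_n}^2\le 4/n^2$ and $\norm{(W_n'-W_n)^{3}}\le 8/n^3$; in either case $H\in\cH_1$ or $H\in\cH_2$ this gives $A_3\le\frac1{6\lambda}\cdot\frac{8}{n^3}=\frac{4(n+\theta-1)}{3n^2}$, hence $\frac12D_3A_3\le D_3(H;\theta)\frac{2(n+\theta-1)}{3n^2}$. Substituting $A_1,A_2,A_3$ into \eqref{8} yields the claim. The main obstacle is purely the bookkeeping in the $A_2$ step: correctly identifying the expected squared increment as a signed measure on $E^2$ (tracking the diagonal versus off-diagonal parts), and invoking diffuseness of $\pi$ at the right moment to evaluate the total variation norm exactly; everything else is a short computation.
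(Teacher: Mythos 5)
Your proposal is correct and follows essentially the same route as the paper: the same resampled-last-coordinate pair $(W_n,W_n')$, the same $\lambda=[n(n+\theta-1)]^{-1}$ with $R\equiv 0$, the same identification of $\tfrac{1}{2\lambda}\IE[(W_n'-W_n)^2\mid W_n]$ leading to $A_2\le 2\theta/n$, and the same crude norm bounds giving $A_3\le \tfrac{4(n+\theta-1)}{3n^2}$. The only step you assert without justification is the exchangeability of $(y_1,\dots,y_n)$ behind $\IE[\delta_{y_n}\mid W_n]=W_n$ (the paper derives it from the exchangeability of the Chinese Restaurant partition plus the i.i.d.\ labels), and note that for $A_2$ the triangle inequality already gives the needed bound of $4$, so the mutual-singularity argument, while valid, is not required.
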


\begin{proof}
We apply Theorem~\ref{THM3} and construct $W_n'$ from the sampling process by resampling the last step. 
Recalling  $\sigma_i$ is the label of the $i$th sample, denote $Y_i:= \delta_{x_{\sigma_i}}$, so that $W_n = \frac{1}{n} \sum_{i=1}^n Y_i$.  
Let $Y_n'$ be conditionally independent of $Y_n$ given $(Y_1,\ldots,Y_{n-1})$, and have the same conditional distribution. 
Setting $W_n' = W_n - \frac{Y_n}{n} + \frac{Y_n'}{n}$, it is straightforward to see that $W_n'\eqlaw W_n$. To check the linearity condition~\eq{7}, we use~\eq{eq:crpsamp} to easily compute
\besn{\label{eq:prcrp}
\E[ Y_n'| Y_1, \ldots, Y_n] &= \frac{1}{n+\theta -1} \left[ \theta \pi + (n-1)W_{n-1}\right]\\
	&=\frac{1}{n+\theta -1} \left[ \theta \pi+ nW_n - Y_n\right].
}
Now, we claim that the vector $(Y_1,\ldots,Y_n)$ is exchangeable. To see this, consider the
the random partition of $\{1,\ldots,n\}$ generated by the equivalence relation $i\sim j\iff \sigma_i=\sigma_j$; this partition is created by grouping the indices of the $Y_i$ that put mass on the same value of $x_i$. From the description of the dynamics above, it is clear that this partition is distributed as a one parameter Chinese Restaurant Process run to step~$n$, and thus it is exchangeable; see \cite[Section~3.2]{Pitman2006}. 
The exchangeability of $(Y_1,\ldots,Y_n)$ now follows from\cite[Proof of (11.9)]{Aldous1985}, noting that the $x_i$ are i.i.d.
Because of exchangeability, $\IE[Y_i | \sum_{j=1}^n Y_j]$ is the same for all~$i=1,\ldots,n$, and so 
must equal $W_n$. In particular, $\IE[Y_n|W_n]=W_n$, and using this in~\eq{eq:prcrp} implies
\ba{
\E[Y_n' | W_n] = \frac{\theta}{n+\theta -1} \pi + \frac{n-1}{n+\theta -1}W_n,
}
and so
\ba{
\E[W_n' - W_n | W_n] &=\frac{1}{n} \E[ Y_n' - Y_n | W_n]\\
	&= - \frac{1}{n(n+\theta-1)} \theta\clr{W_n- \pi}.
}
We therefore apply Theorem~\ref{THM3} with $R = 0$ and $\lambda = \frac{1}{n(n+\theta -1)}$.

Since $R=0$, $A_1$ from Theorem~\ref{THM3} is zero. To bound~$A_2$ from Theorem~\ref{THM3}, we first compute
\bes{
\IE\bcls{(Y_n')^2| (Y_1,\ldots,Y_n)}
	&= \frac{1}{n+\theta -1} \left[ \theta \pi^{*2} + (n-1)W_{n-1}^{*2}\right]\\
	&=\frac{1}{n+\theta -1} \left[ \theta \pi^{*2}+ nW_n^{*2} - Y_n^{*2}\right],
}
and since $\tsfrac{1}{n}\sum_{i=1}^n Y_n^{*2}=\tsfrac{1}{n}\sum_{i=1}^n Y_n^2=W_n^{*2}$, exchangeability implies
\bes{
\IE\bcls{Y_n^{*2}| W_n}=\IE\bcls{Y_n^2| W_n}= W_n^{*2}.
}
Using these last two displays with~\eq{eq:prcrp}, we find
\bes{
\IE\bcls{(W_n'-W_n)^2| W_n} 
	&=\frac{1}{n^2}\IE\bcls{(Y_n'-Y_n)^2| W_n} \\
	&=\frac{1}{n^2}\bclr{\IE[(Y_n')^2| W_n]- \IE[ Y_n'Y_n| W_n]- \IE[ Y_nY_n'| W_n]+\IE[ Y_n^2|W_n]} \\
&
\begin{split}
	&=\frac{1}{n^2(n+\theta-1)}\bbclr{  \theta \pi^{*2}+ (n-1)W_n^{*2} -\bclr{ \theta \pi W_n+ nW_n^2 - W_n^{*2}}\\
	&\hspace{4cm} -\bclr{ \theta  W_n\pi+ nW_n^2 - W_n^{*2}}+(n+\theta-1)W_n^{*2}} 
\end{split}	 \\
	&=\frac{1}{n^2(n+\theta-1)}\bbclr{ 2n W_n^{*2} - 2nW_n^2+ \theta \pi^{*2}-\theta \pi W_n-\theta W_n\pi+\theta  W_n^{*2}} \\
		&=2\lambda\bclr{W_n^{*2} - W_n^2+ \tsfrac{\theta}{2n} \clr{ \pi^{*2}- \pi W_n-W_n\pi+  W_n^{*2}}}.
}
We now easily find $A_2\leq \frac{2\theta}{n}$.
To bound~$A_3$ of Theorem~\ref{THM3}, it is easy to see that
\be{
\IE \bbcls{\norm{W_n'-W_n}^3}= \frac{1}{n^3} \IE \bbcls{\norm{Y_n'-Y_n}^3}\leq  \frac{8}{n^3},
}
and 
\be{
\IE \bbcls{\norm{W_n'-W_n}^2 \norm{W_n'-W_n}}= \frac{1}{n^3} \IE \bbcls{\norm{Y_n'-Y_n}^2\norm{Y_n'-Y_n}}\leq  \frac{8}{n^3},
}
and so $A_3\leq \frac{4(n+\theta-1)}{3n^2}$.
\end{proof}

\subsection{Test functions}\label{sec:testfun}

Here we discuss the implications of 
\ben{\label{eq:convt}
\lim_{N\to\infty}\babs{\IE H(W_N) - \IE H(Z)} =0, \text{ for all } H\in \cH_i,
}
for either $i=1$ or $i=2$. If for all $\phi\in \rC(E)$, 
\be{
\lim_{N\to\infty}\babs{\IE\angle{\phi,W_N} - \IE\angle{\phi,Z}}=0,
} 
then $W_N\stackrel{w}{\longrightarrow} Z$, e.g., see \cite[Chapter~4]{Kallenberg2017}; here weak convergence is with respect to the Prokhorov metric or weak topology. Thus if~\eq{eq:convt} holds for either $\cH_1$ or $\cH_2$, then weak convergence is implied. 

More importantly, we have the following lemma that shows that good approximation of expectations of functions from $\cH_2$ implies good approximation of sampling formulas.
\begin{lemma}\label{lem:smoothsamp}
Let $W$ be a random measure of the form
\be{
W=\sum_{i} Q_i \delta_{\xi_i},
}
where $(Q_1,Q_2,\ldots)\in \nabla_\infty$ is random and independent of $\xi_1,\xi_2, \ldots,$ which are i.i.d.\ $\pi$-distributed, for some diffuse $\pi\in M_1$. Let $\theta>0$ and $Z\sim\DP(\theta,\pi)$. If, 
for all $H\in \cH_2$,
\ben{\label{eq:bdassump}
\babs{\IE H(W) - \IE H(Z)} \leq D_1(H;\theta)B_1+D_2(H;\theta)B_2+D_3(H;\theta)B_3,
}
for some non-negative $B_1,B_2, B_3$, and 
where $D_i(H;\theta)$, $i=1,2,3$, are given in Definition~\ref{DEF:D},
then
\ben{\label{eq:bdconclu}
\dtv\bclr{\cS_n(W), \cS_n(Z)}\leq 
	\wh D_1(n;\theta)B_1+\wh D_2(n;\theta)B_2+ \wh D_3(n;\theta)B_3,
}
where
\ba{
\wh D_1(n;\theta) &:= \frac{4n}{\theta} ,\\
\wh D_2(n;\theta)&:= \frac{4n}{\theta} + \frac{4n(n-1)}{\theta+1},\\
\wh D_3(n;\theta) &:= \frac{4 n}{\theta} + \frac{12 n(n-1)}{\theta+1} +
\frac{16 n(n-1)(n-2)}{3(\theta+2)}. 
}
\end{lemma}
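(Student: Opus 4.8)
The plan is to write
$\dtv(\cS_n(W),\cS_n(Z))=\sup_A\babs{\P(\cS_n(W)\in A)-\P(\cS_n(Z)\in A)}$, the supremum over sets $A$ of partitions of $[n]:=\{1,\dots,n\}$, and to realise each such difference as a limit of $\E H(W)-\E H(Z)$ with $H\in\cH_2$, so that the hypothesis~\eqref{eq:bdassump} applies. The starting point is the elementary sampling identity: for an atomic $\mu=\sum_i q_i\delta_{e_i}\in M_1$ with distinct atoms and an i.i.d.\ sample $y_1,\dots,y_n$ from $\mu$, one has $\E[\Psi(y_1,\dots,y_n)\mid\mu]=\angle{\Psi,\mu^n}$ for bounded measurable $\Psi$ on $E^n$. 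In particular the conditional law of $\cS_n(\mu)$ depends only on $(q_i)$, with $\P(\cS_n(\mu)=\cQ\mid(q_i))=\sum_{i_1,\dots,i_{|\cQ|}\text{ distinct}}\prod_r q_{i_r}^{n_r}$ for a partition $\cQ$ with blocks of sizes $n_1,\dots,n_{|\cQ|}$; this applies verbatim to $W$ and to $Z$ (whose masses are $\PD(\theta)$-distributed and whose labels are i.i.d.\ $\pi$).

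The main construction is a continuous approximation to the (discontinuous) ``equality pattern'' function. Given $g\colon\{\text{partitions of }[n]\}\to[-1,1]$, pick continuous $h_\eps\colon[0,\infty)\to[0,1]$ with $h_\eps\equiv1$ on $[0,\eps/2]$ and $h_\eps\equiv0$ on $[\eps,\infty)$, and set
\be{
\phi_\eps(x_1,\dots,x_n):=\sum_{\cQ}g(\cQ)\prod_{\{i,j\}:i\sim_\cQ j}h_\eps(d(x_i,x_j))\prod_{\{i,j\}:i\not\sim_\cQ j}\bclr{1-h_\eps(d(x_i,x_j))},
}
the sum over set partitions $\cQ$ of $[n]$. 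Then $\phi_\eps\in\rC(E^n)$, and since the weights multiplying $g(\cQ)$ are nonnegative and sum over $\cQ$ to at most $1$ (they would sum to $\prod_{\{i,j\}}\bclr{h_\eps+(1-h_\eps)}=1$ if one ranged over all graphs on $[n]$, and partitions are a subset), $\norm{\phi_\eps}_\infty\le\norm{g}_\infty\le1$, so $H_\eps(\mu):=\angle{\phi_\eps,\mu^n}$ lies in $\cH_2$. Because $\pi$ is diffuse, the labels $\xi_i$ are a.s.\ pairwise distinct, hence for $\eps$ below the least positive pairwise distance among any finite collection of them, $\phi_\eps$ evaluated at that collection reduces to the single term indexed by its true equality pattern. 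Expanding $\angle{\phi_\eps,W^n}=\sum_{j_1,\dots,j_n}Q_{j_1}\cdots Q_{j_n}\,\phi_\eps(\xi_{j_1},\dots,\xi_{j_n})$ and applying dominated convergence (in the summation index, all terms bounded by $1$ with total mass one) gives $\angle{\phi_\eps,W^n}\to\E[g(\cS_n(W))\mid(Q_i)]$ a.s.\ as $\eps\to0$, whence $\E H_\eps(W)\to\E g(\cS_n(W))$; the same holds for $Z$.

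It then remains to feed $H_\eps$ into~\eqref{eq:bdassump} and pass to the limit: this gives $\babs{\E H_\eps(W)-\E H_\eps(Z)}\le\sum_{i=1}^3 D_i(H_\eps;\theta)B_i$ for every $\eps$, the left side tends to $\babs{\E g(\cS_n(W))-\E g(\cS_n(Z))}$, and taking $g=\ind_A$ together with the supremum over $A$ yields~\eqref{eq:bdconclu} provided $D_i(H_\eps;\theta)\le\wh D_i(n;\theta)$ \emph{uniformly in $\eps$}. This uniformity is the main obstacle, and it is where Definition~\ref{DEF:D} must be used carefully: although $\phi_\eps$ has derivatives that blow up as $\eps\to0$, the functional $\mu\mapsto\angle{\phi,\mu^n}$ is a polynomial of degree $n$ in $\mu$, so one must check that the Stein-solution derivative bounds defining $D_i(\cdot;\theta)$ on $\cH_2$ see $\phi$ only through $\norm{\phi}_\infty$ and the degree $n$, yielding exactly $D_i(H_\eps;\theta)\le\wh D_i(n;\theta)$ when $\norm{\phi_\eps}_\infty\le1$. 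Everything else — continuity of $\phi_\eps$, the series expansion of $\angle{\phi_\eps,W^n}$, and the two dominated-convergence passages — is routine once boundedness and diffuseness of $\pi$ are in hand.
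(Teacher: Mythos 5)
Your proposal is correct and follows essentially the same route as the paper: replace the equality-pattern indicator by an $\eps$-smoothed version built from a continuous approximation to the diagonal, observe that the resulting $\phi_\eps$ has $\norm{\phi_\eps}_\infty\le 1$ so that the $\cH_2$ constants of Definition~\ref{DEF:D} give exactly $\wh D_i(n;\theta)$ uniformly in $\eps$, and then let $\eps\to0$ using diffuseness of $\pi$. Your justification that the partition weights sum to at most $1$ (as a sub-collection of the full product expansion over graphs) is in fact a cleaner argument for $\norm{\phi_\eps}_\infty\le1$ than the paper's, and your termwise dominated-convergence passage replaces the paper's sandwich bound via $\IP(\rho(\xi_1,\xi_2)<\eps)$, but these are cosmetic differences.
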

\begin{proof}
Fix $n$, let~$C$ be a subset of set partitions of $\{1,\ldots,n\}$, and let $S(W)\sim\cS_n(W)$ and $S(Z)\sim\cS_n(Z)$. We want to show that 
$\abs{\IP(S(W)\in C) - \IP(S(Z)\in C)}$ is upper bounded by the right hand side of~\eq{eq:bdconclu}.
If the partition $\Pi=\{E_1, E_2,\ldots,E_k\}$, where the $E_i\subseteq\{1,\ldots,n\}$ are disjoint with $\#(E_i)=n_i$ and $\sum_{i=1}^k n_i=n$, then
\be{
\IP(S(W)=\Pi)=\IE\bbbbcls{ \sum_{\{i_1,\ldots,i_k\}} \prod_{j=1}^kQ_{i_j}^{n_j} },
}
where the sum is over all $k$-tuples of distinct  indices. Then also, in obvious notation,
\be{
\IP(S(W)\in C)= \sum_{\Pi\in C} \IE\bbbbcls{ \sum_{\{i_1,\ldots,i_{k(\Pi)}\}} \prod_{j=1}^{k(\Pi)}Q_{i_j}^{n_j(\Pi)} }.
}

To realize this as an expectation of an integral against $W$, define
the diagonal indicator $d: E^2 \to \{0,1\}$ by
\be{
d(x,y)=\Ind\bcls{x=y},
}
and $h_{\Pi}:E^n\to\{0,1\}$, by 
\bes{
h_{\Pi}(y_1,\ldots,y_n)&=\prod_{i=1}^{k(\Pi)}\prod_{j_1,j_2\in E_i(\Pi)} d(y_{j_1}, y_{j_2})\\
	&\hspace{1cm}	\times \prod_{1\leq i_1 < i_2 \leq k(\Pi)}\prod_{j_1\in E_{i_1}(\Pi)}\prod_{j_2\in E_{i_2}(\Pi)}\bclr{1-d\clr{y_{j_1},y_{j_2}}}.
}
Then it is straightforward to see that
\be{
\IP(S(W)=\Pi)=\bangle{ h_{\Pi}, W^n }=:H_{\Pi}(W).
}
Unfortunately, $h_{\Pi}\not\in\rC(E^n)$ and so we can't say $H_{\Pi}\in \cH_2$, but we can approximate~$h_{\Pi}$ by an $\eps$-smoothed version for small $\eps>0$. Denoting the metric on $E$ by~$\rho$, define the continuous smoothed diagonal indicator
\be{
d\s{\eps}(x,y):=\max\bbbclc{\bbbclr{1-\frac{\rho(x,y)}{\eps}},0}\in [0,1].
}
Now define $\hat h_{\Pi}= h_{\Pi}^{(\eps)}:E^n \to[0,1]$ by 
\ban{
\hat h_{\Pi}(y_1,\ldots,y_n)
	&=\prod_{i=1}^{k(\Pi)}\prod_{j_1,j_2\in E_i(\Pi)} d\s{\eps}(y_{j_1}, y_{j_2}) \notag \\
	&\hspace{1cm}	\times \prod_{1\leq i_1 < i_2 \leq k(\Pi)}\prod_{j_1\in E_{i_1}(\Pi)}\prod_{j_2\in E_{i_2}(\Pi)}\bclr{1-d\s{\eps}\clr{y_{j_1},y_{j_2}}}.\label{eq:cprod} 
	}
Now, since $\hat h_{\Pi} \in \rC(E^n)$, we have 
\be{
\wh H_{C}(W):=\bbangle{ \sum_{\Pi\in C}\hat h_{\Pi}, W^n }\in \cH_2.
}
Moreover, for any fixed $(y_1,\ldots, y_n)$, there is only one $\Pi$ such that $\hat h_{\Pi}(y_1,\ldots, y_n) \not=0$, and so 
 $\bnorm{\sum_{\Pi\in C} \hat h_{\Pi} }_{\infty}\leq 1$,  and we easily find
$D_i\bclr{\wh H_{C}, \theta} \leq \wh D_i(n, \theta)$ for $i=1,2,3$. Therefore,~\eq{eq:bdassump} yields that
\be{
\babs{\IE \wh H_{C}(W) - \IE \wh H_{C}(Z)} \leq \wh D_1(n;\theta)B_1+\wh D_2(n;\theta)B_2+ \wh D_3(n;\theta)B_3.
}
We complete the proof by showing that, as $\eps\to0$, 
\be{
\IE \wh H_C(W) = \IP(S(W)\in C) +  \lito(1),
}
and that the same holds with $W$ replaced by $Z$ (the proof is the same for~$Z$).
Since $n$ is fixed relative to $\eps$, it is enough to show that for each~$\Pi$, 
\be{
\IE \bangle{ \hat h_\Pi, W^n }= \IP(S(W)=\Pi) + \lito(1).
}
Fixing~$\Pi$ and dropping it from the notation, first 
 observe that if $\cI\subseteq \IN^n$ is the subset of indices such that 
 $(i_1,\ldots, i_n)\in \cI$ if and only if
 \be{
 \{i_m\}_{m\in E_j} \cap \{i_m\}_{m\in E_{\ell}}=\varnothing, \,\,\,\, 1\leq j <\ell \leq k,
 }
 then 
 \ba{
\bangle{ \hat h, W^n } 
	&= \sum_{i_1,\ldots, i_n}  \hat h(\xi_{i_1},\ldots, \xi_{i_n}) \prod_{j=1}^n Q_{i_j} \\
	&=  \sum_{(i_1,\ldots, i_n)\in\cI}\hat h(\xi_{i_1},\ldots, \xi_{i_n}) \prod_{j=1}^n Q_{i_j};
} 
this is because the other terms in the sum are zero, coming from a zero factor in~\eq{eq:cprod}. The second key observation is that if $(i_1, \ldots, i_n) \in \cI$ is such that for some $j\in\{1,\ldots,k\}$
and $\ell_1\not=\ell_2$ with $\ell_1,\ell_2\in E_j$, there is $i_{\ell_1}\not=i_{\ell_2}$, 
then 
\bes{
\IE\bbcls{\hat h(\xi_{i_1},\ldots, \xi_{i_n}) \prod_{j=1}^n Q_{i_j}} 
	&\leq \IE\bbcls{d\s\eps\bclr{\xi_{\ell_1},\xi_{\ell_2} } \prod_{j=1}^n Q_{i_j}}  \\
	&\leq \IP\bclr{\rho(\xi_1, \xi_2)< \eps} \IE\bbcls{\prod_{j=1}^n Q_{i_j}}.
}
Thus, using Fubini's theorem (the sum is bounded by one), we have
\bes{
 \IP(S(W)=\Pi) = \IE\bbbbcls{ \sum_{\{i_1,\ldots,i_k\}} \prod_{j=1}^kQ_{i_j}^{n_j} } 
 	&\leq \IE \bbcls{\bangle{ \hat h, W^n }}   \\
	&= \IE\bbbcls{\sum_{\cI}\hat h(\xi_{i_1},\ldots, \xi_{i_n}) \prod_{j=1}^n Q_{i_j}} \\
	&\leq   \IP\bclr{\rho(\xi_1, \xi_2)< \eps}
		+ \IE\bbbbcls{ \sum_{\{i_1,\ldots,i_k\}} \prod_{j=1}^kQ_{i_j}^{n_j} } \\
		&=\IP\bclr{\rho(\xi_1, \xi_2)< \eps} +  \IP(S(W)=\Pi) .
}
That $\IP\bclr{\rho(\xi_1, \xi_2)< \eps}=\lito(1)$ follows since $\pi$ is diffuse:
\be{
\IP\bclr{\rho(\xi_1, \xi_2)< \eps}=\int \IP\bclr{\rho(\xi_1, x_2)< \eps} \pi(dx_2) \to 0,
}
where we have used dominated convergence and that $\IP\bclr{\rho(\xi_1, x_2)< \eps}\to \IP(\xi_1=x_2)=0$.
\end{proof}

\section{Stein's method for the Dirichlet process}\label{sec:smdp}
The first step of Stein's method is to define a characterizing operator for~$\DP(\theta,\pi)$, and here we use the generator of a FV Markov process with unique stationary distribution~$\DP(\theta,\pi)$; see \cite{Fleming1979} and \cite{Ethier1993}. We consider the generator acting on the two domains
\ba{
\cD_1:=\bclc{F(\mu):=f(\angle{\phi_1,\mu},\ldots, \angle{\phi_k,\mu}):
k \in \IN, f\in \rC^{2}(\IR^k), \phi_i\in \rC(E), i=1,\ldots,k},
}
where $\rC^{2}(\IR^k)$ is the set of functions from $\IR^k\to \IR$ which have two continuous derivatives, 
and $\cD_2:=\cH_2$; note $\cH_1 \subseteq \cD_1$. The generator is a differential operator for the ``derivatives" defined, for $F\in\cD_1\cup \cD_2$, by 
\ban{\label{derivatives}
\frac{\partial F(\mu)}{\partial \nu}
	&:=\lim_{\eps \to 0^+} \frac{F((1-\eps)\mu+ \eps \nu) - F(\mu)}{\eps},
}
with higher order derivatives defined analogously; noting that order of differentiation will typically matter. We shorten
formulas by writing $\partial_x$ for $\frac{\partial}{\partial \d_x}$, $\partial_{xy}$ for
$\frac{\partial}{\partial \d_y}\frac{\partial}{\partial\d_x}$ (noting the reversal of order), and so on, when there is no danger of confusion. 
Lemma~\ref{lem:derivsform} below collects formulas for derivatives of functions in $\cD_1\cup\cD_2$; note that most questions about
integrability, differentiation, and formulas stated below are easily resolved by appealing directly to these expressions.

Now, given $\pi \in M_1$, define the parent independent mutation operator~$A:\rC(E)\to \rC(E)$ 
by
\begin{align}
A \phi(x) = \frac\theta2 \int_E \bclr{\phi(y) - \phi(x)}\pi(dy), \label{A}
\end{align}
and then define the generator~$\cA$ of our FV Markov process by 
\ben{\label{eq:dpgen}
\cA F (\mu)=\int_{E} A \partial_{x} F(\mu) \mu(dx) 
+ 
	\frac{1}{2} \int_{E^2} \bclr{\mu(dx)\delta_x (dy) - \mu(dx) \mu(dy)} \partial_{xy} F(\mu),
}
where we can take the domain of $\cA$ to be either $\cD_1$ or $\cD_2$, and to clarify notation,
\be{
A \partial_{x} F(\mu)= \frac\theta2 \int_E \bclr{\partial_{y} F(\mu) - \partial_{x} F(\mu)}\pi(dy),
}
and so, in particular,
\ben{\label{eq:gddrift}
\int_{E} A \partial_{x} F(\mu) \mu(dx) =\frac\theta2 \int_E \partial_{x} F(\mu) \bclr{\pi-\mu}(dx).
}

\begin{remark}
The more standard definition of derivative used in the generator~\eq{eq:dpgen} is 
\ban{\label{eq:derivbad}
\lim_{\eps \to 0^+} \frac{F(\mu+ \eps \nu) - F(\mu)}{\eps}.
}
The action of~$\cA$ on $F\in\cD_1\cup\cD_2$ is the same regardless of choice of derivative definition, but using~\eq{eq:derivbad} requires taking limits from outside of~$M_1$ (though there is no problem in extending the domain of  functions in $\cD_1\cup\cD_2$), which is not amenable to our probabilistic coupling arguments.
\end{remark}

It was shown by \cite{Ethier1990} (see also \cite[Theorems~5.3 and~5.4]{Feng2010}) that
the Markov process on~$M_1$ with generator~\eq{eq:dpgen} is reversible with respect to its unique stationary distribution~$\DP(\theta, \pi)$. Therefore we have the following lemma that gives the characterising operator we use to develop Stein's method.

\begin{lemma}
Fix $i\in 1,2$. Let $\theta>0$ and $W$ be a random probability measure on~$E$.  Then $W \sim \DP(\theta, \pi)$ if and only if for all functions $F \in \cD_i$,
\be{
\E \cA F(W) = 0.
}
\end{lemma}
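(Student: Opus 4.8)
The plan is to prove the two directions separately, with the ``only if'' direction following immediately from the cited reversibility/stationarity result of \cite{Ethier1990}, and the ``if'' direction being the part requiring an argument. For the forward direction: since $\DP(\theta,\pi)$ is the stationary distribution of the FV Markov process with generator $\cA$, and since every $F\in\cD_i$ lies in the domain of $\cA$ (with $\cA F$ bounded and continuous, as follows from the derivative formulas collected in Lemma~\ref{lem:derivsform}), stationarity gives $\E\cA F(W)=0$ whenever $W\sim\DP(\theta,\pi)$. One should be slightly careful here that $\cD_1,\cD_2$ are genuinely a core, or at least a subset of the domain on which the stationarity identity $\int \cA F\, d\mu_{\mathrm{stat}}=0$ is known to hold; this is exactly what \cite{Ethier1990} (see also \cite[Theorems~5.3 and~5.4]{Feng2010}) provides, so this direction is essentially a citation.

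For the converse, suppose $W$ is a random probability measure with $\E\cA F(W)=0$ for all $F\in\cD_i$; I want to conclude $\law(W)=\DP(\theta,\pi)$. The standard route is to use the fact that the FV semigroup $(T_t)_{t\geq0}$ on $\cD_i$ has $\DP(\theta,\pi)$ as its \emph{unique} stationary distribution, together with the identity $T_tF - F = \int_0^t T_s\cA F\, ds$ valid for $F$ in the domain. Applying this with the law of $W$: $\E[T_tF(W)] - \E[F(W)] = \int_0^t \E[T_s\cA F(W)]\,ds$. If one can show $T_s\cA F$ again lies in (the closure of) $\cD_i$, or more directly that $\E[\cA G(W)]=0$ for all $G$ in the domain by approximation from $\cD_i$, then the right-hand side vanishes, so $\E[T_tF(W)]=\E[F(W)]$ for all $t$ and all $F\in\cD_i$. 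Letting $t\to\infty$ and using ergodicity (uniqueness of the stationary distribution) of the FV process, $\E[T_tF(W)]\to \E[F(Z)]$ with $Z\sim\DP(\theta,\pi)$, hence $\E[F(W)]=\E[F(Z)]$ for all $F\in\cD_i$. Since $\cD_i\supseteq\cH_1$ (resp.\ equals $\cH_2$) and expectations of such functions determine the law of a random element of $M_1$ (the functions $\mu\mapsto f(\angle{\phi_1,\mu},\dots,\angle{\phi_k,\mu})$ already separate points and, being an algebra containing constants and separating points on the compact metrizable space $M_1$, are measure-determining by Stone--Weierstrass), we conclude $\law(W)=\DP(\theta,\pi)$.

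The main obstacle is the technical bookkeeping around domains and cores: one needs that $\cD_i$ is contained in the domain of the generator of the FV process, that $\cA F$ for $F\in\cD_i$ is again nice enough (bounded, continuous, and ideally still in a measure-determining class) so that $\E[\cA F(W)]=0$ can be bootstrapped along the semigroup, and that the convergence $T_tF\to\E[F(Z)]$ holds in a strong enough sense (e.g.\ uniformly, or at least pointwise-boundedly) to pass to the limit under the expectation over $\law(W)$. All of these are established in the Ethier--Kurtz/Ethier references on the FV process and its infinite-dimensional generator, and the derivative formulas of Lemma~\ref{lem:derivsform} make the required regularity of $\cA F$ explicit. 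Given those inputs, the proof is short; the bulk of the work is citing the right facts about the FV process rather than new computation, so I would keep the written proof to a couple of lines invoking \cite{Ethier1990} for both the stationarity identity and the uniqueness/ergodicity needed for the converse.
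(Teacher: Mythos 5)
Your proposal is correct and takes essentially the same approach as the paper: the paper gives no written proof, deducing the lemma directly from the cited fact (Ethier 1990; Feng, Theorems 5.3--5.4) that the FV process with generator $\cA$ on the cores $\cD_1,\cD_2$ is reversible with unique stationary distribution $\DP(\theta,\pi)$. Your write-up simply spells out the standard stationarity-plus-uniqueness (core) argument that this citation encapsulates, correctly flagging the only genuine technical point (extending $\E[\cA F(W)]=0$ from $\cD_i$ to the full domain along the semigroup).
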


\begin{remark}
An alternative approach to Stein's method for Poisson-Dirichlet approximation is to work on the infinite dimensional 
simplex directly, characterising the distribution as the stationary distribution of the diffusion given in  
\cite{Ethier1981}. The generator for this process is defined on the core given by the sub-algebra generated by 
$\{1,\sum_{i=1}^\infty x_i^2, \sum_{i=1}^\infty x_i^3, \ldots\}$ as
\ba{
Bf(\tx) = \frac 12 \sum_{i,j=1}^\infty x_i(\delta_{ij} - x_j) \frac{\partial^2 f(\tx)}{\partial x_i \partial x_j} - \frac 12 \theta \sum_{i=1}^\infty x_i \frac{\partial f(\tx)}{\partial x_i}.
}
The difficulty with using this formula is that it may not apply to functions~$f$ outside of the core; see~\cite[Remark~5.4]{Petrov2009}.
\end{remark}

Before going further, for convenience we write
expressions for the derivatives of $F\in\cD_1\cup\cD_2$; cf., \cite[(3.14)]{Ethier1993}.
For a function $f\in \rC^2(\IR^k)$ let $\partial_i f, \partial_{ij}f$ denote 
the first partial derivative in coordinate~$i$ and the 
second in coordinates~$i,j$ (there is no danger of confusion with $\partial_x$ for $x\in E$).

\begin{lemma}\label{lem:derivsform}
If $F\in\cD_1$ is of the form $F(\mu):=f(\angle{\psi_1,\mu},\ldots, \angle{\psi_k,\mu})$ with $f\in\rC^2(\IR^k)$ and $\phi_i\in\rC(E)$, then
\ban{
\partial_x F(\mu)&=\sum_{i=1}^k \partial_i f\bclr{\angle{\psi_1,\mu},\ldots, \angle{\psi_k,\mu}} \bclr{\psi_i(x)-\angle{ \psi_i,\mu}}, \label{eq:d1deriv1} \\
\begin{split}\label{eq:d1deriv2}
\partial_{xy} F(\mu)&=\sum_{i,j=1}^k 
\partial_{ij} f\bclr{\angle{\psi_1,\mu},\ldots, \angle{\psi_k,\mu}} \bclr{\psi_i(x)-\angle{ \psi_i,\mu}}\bclr{\psi_j(y)-\angle{ \psi_j,\mu}} \\
	&\hspace{1cm}-\sum_{i=1}^k \partial_i f\bclr{\angle{\psi_1,\mu},\ldots, \angle{\psi_k,\mu}} \bclr{\psi_i(y)-\angle{ \psi_i,\mu}}.
\end{split}
}
If $F\in\cD_2$ is of the form $F(\mu)=\angle{\psi, \mu^k}$ with $\psi\in\rB(\IR^k)$,
then
\ban{
\partial_x F(\mu)
	&=\sum_{i=1}^k \bclr{\angle{\psi_x\s{i},\mu^{k-1}}-\angle{\psi,\mu^k}},\label{eq:d2deriv1}  \\
\partial_{xy} F(\mu)
	&=\sum_{i\not=j}^k \bcls{\angle{\psi_{xy}\s{i,j}, \mu^{k-2}}-\angle{\psi_x\s{i},\mu^{k-1}}}-k\sum_{i=1}^k \bclr{\angle{\psi_y\s{i},\mu^{k-1}}-\angle{\psi,\mu^k}}, \label{eq:d2deriv2}\\
\begin{split}	\label{eq:d2deriv3}
\partial_{xyz} F(\mu)
	&=\mathop{\sum_{i,j,\ell=1}}_{\mathrm{distinct}}^k\bcls{\angle{\psi_{xyz}\s{i,j,\ell}, \mu^{k-3}}-\angle{\psi_{xy}\s{i,j}, \mu^{k-2}}} 
	 -k\sum_{i\not=\ell}^k \bcls{\angle{\psi_{yz}\s{i,\ell}, \mu^{k-2}}-\angle{\psi_y\s{i},\mu^{k-1}}} \\
	&\quad-(k-1)\sum_{i\not=\ell}^k \bcls{\angle{\psi_{xz}\s{i,\ell}, \mu^{k-2}}-\angle{\psi_x\s{i},\mu^{k-1}}}+k^2\sum_{i=1}^k \bclr{\angle{\psi_z\s{i},\mu^{k-1}}-\angle{\psi,\mu^k}},
	\end{split}
}
where $\psi_x\s{i}\in \rC(E^{k-1})$, $\psi_{xy}\s{i,j} \in \rC(E^{k-2})$, $\psi_{xyz}\s{i,j,\ell} \in \rC(E^{k-3})$ are defined by, for $i<j<\ell$, 
\ba{
\psi_x\s{i}(z_1,\ldots,z_{k-1})&=\psi(z_1,\ldots,z_{i-1},x,z_{i+1},\ldots, z_{k-1}), \\
\psi_{xy}\s{i,j}(z_1,\ldots,z_{k-2})&=\psi(z_1,\ldots,z_{i-1},x,z_{i+1},\ldots,z_{j-1},y,z_{j+1},\ldots, z_{k-2}),  \\
\psi_{xyz}\s{i,j,\ell}(z_1,\ldots,z_{k-3})&=\psi(z_1,\ldots,z_{i-1},x,z_{i+1},\ldots,z_{j-1},y,z_{j+1},\ldots,z_{\ell-1},z,z_{\ell+1},\ldots, z_{k-3}), 
}
with analogous definitions for other orderings of $i,j,\ell$.
\end{lemma}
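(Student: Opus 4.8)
The plan is to derive all six formulas directly from the definition \eqref{derivatives}, exploiting that $\eps\mapsto(1-\eps)\mu+\eps\nu$ is an affine, hence real-analytic, curve in $M_1$, so that $\partial/\partial\nu$ is simply differentiation of a composition along this curve at $\eps=0^+$. In particular it is linear in $F$, it obeys the Leibniz rule $\partial_\nu(GH)=(\partial_\nu G)H+G(\partial_\nu H)$, and it obeys the chain rule $\partial_\nu\bclr{g(F_1,\dots,F_m)}=\sum_{r}(\partial_r g)(F_1,\dots,F_m)\,\partial_\nu F_r$ whenever $g\in\rC^1$. Since $E$ is compact, every $\phi\in\rC(E)$ and $\psi\in\rC(E^k)$ is bounded, so all remainder terms below are genuinely $\bigo(\eps^2)$, the one-sided limits create no difficulty, and differentiation under the integral sign against probability measures is justified by dominated convergence.

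For $F\in\cD_1$ I would first note that $\angle{\psi_i,(1-\eps)\mu+\eps\d_x}=\angle{\psi_i,\mu}+\eps\bclr{\psi_i(x)-\angle{\psi_i,\mu}}$ is affine in $\eps$ with ``velocity'' $\psi_i(x)-\angle{\psi_i,\mu}$, so the chain rule yields \eqref{eq:d1deriv1} immediately. For \eqref{eq:d1deriv2} I would apply $\partial_y$ to the right-hand side of \eqref{eq:d1deriv1} via the product and chain rules: differentiating the factor $\partial_i f(\angle{\psi_1,\mu},\dots,\angle{\psi_k,\mu})$ produces the double sum with $\partial_{ij}f$ (this is where $f\in\rC^2$ enters), while differentiating the affine factor $\psi_i(x)-\angle{\psi_i,\mu}$ contributes $-\bclr{\psi_i(y)-\angle{\psi_i,\mu}}$, giving the single sum. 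Note the single-sum term involves $y$ only, which is exactly why differentiation order matters.

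For $F(\mu)=\angle{\psi,\mu^k}\in\cD_2$ I would expand the tensor power $\bclr{(1-\eps)\mu+\eps\d_x}^{\otimes k}=\sum_{A\subseteq\{1,\dots,k\}}(1-\eps)^{k-|A|}\eps^{|A|}\,\nu_1^A\otimes\cdots\otimes\nu_k^A$, with $\nu_j^A=\d_x$ for $j\in A$ and $\nu_j^A=\mu$ otherwise. The term $A=\varnothing$ contributes $(1-\eps)^k\angle{\psi,\mu^k}$, the singletons $A=\{i\}$ contribute $\eps(1-\eps)^{k-1}\sum_{i=1}^k\angle{\psi_x\s{i},\mu^{k-1}}$, and $|A|\ge2$ is $\bigo(\eps^2)$; dividing by $\eps$, letting $\eps\to0^+$, and using $\bclr{(1-\eps)^k-1}/\eps\to-k$ yields \eqref{eq:d2deriv1}. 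The key structural point is that $\partial_x F$ is then a finite linear combination of functions of the same type $\mu\mapsto\angle{\chi,\mu^m}$ with $\chi$ continuous, so \eqref{eq:d2deriv1} can be re-applied term by term: applied to $\angle{\psi_x\s{i},\mu^{k-1}}$ (with $k-1$ in place of $k$ and the kernel slots relabeled by $\{1,\dots,k\}\setminus\{i\}$) it gives $\partial_y\angle{\psi_x\s{i},\mu^{k-1}}=\sum_{j\ne i}\angle{\psi_{xy}\s{i,j},\mu^{k-2}}-(k-1)\angle{\psi_x\s{i},\mu^{k-1}}$, and applied to $\angle{\psi,\mu^k}$ it handles the $-k\angle{\psi,\mu^k}$ piece; collecting terms gives \eqref{eq:d2deriv2}, and one more application of \eqref{eq:d2deriv1} in the same way gives \eqref{eq:d2deriv3}.

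I expect the only genuinely delicate part to be the combinatorial bookkeeping in this last iteration: tracking, as $x$, then $y$, then $z$ are substituted into successive slots and the shrinking factor spaces are relabeled, precisely which slots are ``frozen'' and with what multiplicity each lower-order term $\angle{\cdot,\mu^{m}}$ is produced — this is the source of the coefficients $k$, $k-1$, $k^2$ in \eqref{eq:d2deriv2}--\eqref{eq:d2deriv3}. Everything else (existence of the limits, differentiation under the integral, and the $\bigo(\eps^2)$ estimates) is immediate from boundedness of the continuous kernels on the compact space $E^k$.
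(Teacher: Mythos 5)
Your proposal is correct, and in fact supplies a proof where the paper gives none: the lemma is stated with only a pointer to \cite[(3.14)]{Ethier1993}, so a direct verification from the definition \eqref{derivatives} is exactly what is called for. I checked your bookkeeping: for $\cD_2$, expanding $((1-\eps)\mu+\eps\delta_x)^{\otimes k}$ over subsets $A$ gives the $-k$ from $((1-\eps)^k-1)/\eps$ and the singleton sum, yielding \eqref{eq:d2deriv1}; re-applying this formula (with $k-1$ slots) to each $\angle{\psi_x\s{i},\mu^{k-1}}$ and to $-k\angle{\psi,\mu^k}$ reproduces \eqref{eq:d2deriv2} verbatim, and one more iteration — noting $\sum_{i\ne j}\angle{\psi_x\s{i},\mu^{k-1}}=(k-1)\sum_i\angle{\psi_x\s{i},\mu^{k-1}}$ — gives precisely the coefficients $k$, $k-1$, $k^2$ of \eqref{eq:d2deriv3}. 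The $\cD_1$ computations via the chain and product rules along the affine curve are likewise correct, and your observation that the single-sum term in \eqref{eq:d1deriv2} depends only on $y$ correctly accounts for the paper's warning that the order of differentiation matters. The two points that genuinely need a word of justification are the ones you identify: that the one-sided chain/product rules apply because $\eps\mapsto F((1-\eps)\mu+\eps\nu)$ is a $\rC^1$ function of $\eps$ on $[0,1]$ (using $f\in\rC^2$ and boundedness of the kernels on the compact $E^k$), and that iteration is legitimate because each first derivative is again an expression of the same structural form, so the first-order computation can be re-applied term by term. Both are handled adequately in your sketch; writing out the third-derivative collection of terms explicitly would complete the argument.
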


\subsection{Bounds on the solution of the Stein equation}

To further develop Stein's method, for $H\in \cH_1\cup \cH_2$ and $Z\sim\PD(\theta, \pi)$, we need to solve the Stein equation. That is, we 
solve for $F_H$ satisfying 
\ben{\label{eq:pdstneq1}
\cA F_H(\mu)=H(\mu)-\IE H(Z) =: \wt H(\mu),
} 
and derive properties of the solution $F:=F_H$.
Following the generator approach of \cite{Barbour1988,Barbour1990}, \cite{Gotze1991},
for $\mu\in M_1$, let $(Z_\mu(t))_{t\geq0}$ be distributed as the FV process having generator~\eq{eq:dpgen} with $Z_\mu(0)=\mu$.
For $H:M_1\to\IR$ bounded, we define $F_H: M_1\to \IR$ by 
\ben{\label{eq:pdstnsol}
F_H(\mu)=-\int_0^\infty \IE \bcls{ \wt H(Z_\mu(t))} dt.
}
In the following theorem, we show that~\eq{eq:pdstnsol} is well defined, is the solution to~\eq{eq:pdstneq1}, and calculate bounds on the derivatives of $F_H$. Before stating the theorem, we define the constants that appear in our bounds, and state and prove a technical lemma. For $h\in \BC^{2,1}(\IR^k)$, denote
\bes{
\abs{h}_1&:= \sup_{1\leq i \leq k} \norm{\partial_{i}h}_{\infty}, \\
\abs{h}_2&:= \sup_{1\leq i,j \leq k} \norm{\partial_{i j} h}_{\infty}, \\
\abs{h}_{2,1}&:=\sup_{1\leq i, j \leq k} \sup_{r\not=s} \frac{\abs{h_{ij}(r)-h_{ij}(s)}}{\norm{r-s}_1}.
}

\begin{definition}\label{DEF:D}
For $H\in \cH_1$ with $H(\mu)=h(\angle{\phi_1,\mu},\ldots,\angle{\phi_k,\mu} )$, where $h\in \BC^{2,1}(\IR^k)$ and $\phi_i\in\rC(E)$, $i=1,\ldots, k$, denote
\ba{
\Lip_m(H)&:=\abs{h}_m \bbclr{\sum_{i=1}^{k} \norm{\phi_i}_\infty }^m, \,\, m=1,2, \\
\Lip_3(H)&:=\abs{h}_{2,1}\bbclr{\sum_{i=1}^{k} \norm{\phi_i}_\infty }^3.
} 
\smallskip
\noindent For $H\in \cH_2$ with $H(\mu)=\angle{\phi, \mu^k}$, where $\phi\in\rC(E^k)$, denote
\ba{
\Lip_m(H)&:=k (k-1)\cdots (k-m+1) \norm{\phi}_\infty, \,\, m=1,2,3.
} 

\medskip 
\noindent For $H \in \cH_1 \cup \cH_2$ and $\theta>0$, denote
\ba{
D_1(H; \theta) &:= \frac{4\Lip_1(H)}{ \theta} ,\\
D_2(H; \theta) &:= \frac{4\Lip_1(H)} \theta + \frac{4\Lip_2(H)}{\theta+1},\\
D_3(H; \theta) &:= \begin{cases}
 \frac{4 \Lip_1(H)}{\theta}+  \frac{16  \Lip_2(H)}{\theta+1}+\frac{16 \Lip_3(H)}{3(\theta+2)}, &\ \ H \in \cH_1,\\
 \frac{4 \Lip_1(H)}{\theta}+  \frac{12  \Lip_2(H)}{\theta+1}+\frac{16 \Lip_3(H)}{3(\theta+2)}, &\ \ H \in \cH_2.
\end{cases}
}
\end{definition}

We now state the  technical lemma. Denote the difference operator for signed measure~$\nu$ by
\be{
\Delta_\nu F(\mu):=F(\mu+\nu)-F(\mu).
}

\begin{lemma}\label{lem:domainhbds}
Let $\mu\in M_1$ and $\nu_1,\nu_2,\nu_3$ be bounded signed measures on $E$.
If $H\in \cH_1$ 
and $H(\mu)=h(\angle{\phi_1,\mu},\ldots,\angle{\phi_k,\mu})$ with $h\in \BC^{2,1}(\IR^k)$ and $\phi_i\in\rC(E)$, 
then for  $m=1,2,3$, and $L_m(H)$ given by Definition~\ref{DEF:D}, we have
\ba{ 
\bbabs{\prod_{i=1}^m \Delta_{\nu_i} H(\mu)} &\leq \Lip_m(H) \prod_{i=1}^m \norm{\nu_i}.
}
\end{lemma}
\begin{proof}
For $u,v\in\IR^k$, denote $\bar\Delta_v h(u)=h(u+v)-h(u)$ and denote
\be{
\angle{\bm\phi,\mu}=\bclr{\angle{\phi_1,\mu}, \ldots, \angle{\phi_k, \mu}}.
}
If $m=1,2$, apply \cite[Lemma~3]{Gan2017} to find
\ba{
\bbabs{\prod_{i=1}^m \Delta_{\nu_i} H(\mu)}
	&=\bbbabs{ \bbclr{\prod_{i=1}^m \bar\Delta_{\angle{\bm{\phi},\nu_i}}}h(\angle{\bm{\phi},\mu})}\\
	&\leq \abs{h}_{m} \prod_{i=1}^m \bnorm{\angle{\bm{\phi},\nu_i}}_1,
}
from which the claimed bound follows after noting that for $\phi\in\rC(E)$, $\abs{\angle{\phi,\nu}}\leq \norm{\phi}_\infty \norm{\nu}$. The argument is the same for $m=3$, but replacing~$\abs{h}_m$ by~$\abs{h}_{2,1}$.
\end{proof}

We can now state and prove the relevant facts about the Stein solution~$F_H$.
\begin{theorem}\label{thm:FVstnbds}
If $H:M_1\to\IR$ is bounded, then $F_H$ given at~\eq{eq:pdstnsol}  is well defined, and
\ben{\label{eq:solbd}
\norm{F_H}_{\infty}\leq \frac{2(\theta+1)}{\theta}\norm{\wt H}_{\infty}.
}
If $H\in\cH_1$ ($\cH_2$), then $F_H\in\cD_1$ ($\cD_2$) and satisfies 
$\cA F_H(\mu)=\wt H(\mu)$. Recalling the notation~$D_m(H;\theta)$, $m=1,2,3$, given by Definition~\ref{DEF:D}, if $x,y\in E$, then $\partial_x F_H, \partial_{xy} F_H$ exist, and 
\ban{
\bnorm{\partial_{x} F_H}_{\infty}&\leq D_1(H;\theta), \label{eq:1stderiv} \\
\bnorm{\partial_{xy} F_H}_{\infty}&\leq D_2(H;\theta).
\label{eq:2ndderiv}}
If~$H\in\cH_1$, then for $\mu,\nu\in M_1$
\besn{\label{eq:2ndderivwass}
\babs{\partial_{xy} F_H(\mu)-\partial_{xy} F_H(\nu)}&\leq \norm{\nu-\mu}D_3(H;\theta),
}
and if~$H\in\cH_2$, then for all $x,y,z\in E$, $\partial_{xyz}F_H$ exists and satisfies
\ben{\label{eq:3rdderiv}
\bnorm{\partial_{xyz} F_H}_{\infty}\leq D_3(H;\theta).
}
\end{theorem}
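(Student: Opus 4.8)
\textbf{Proof plan for Theorem~\ref{thm:FVstnbds}.}
The strategy follows the generator approach of \cite{Barbour1988,Barbour1990}: I will establish well-definedness of \eq{eq:pdstnsol} via the exponential contraction of the FV semigroup toward stationarity, then verify the Stein equation by differentiating under the integral, and finally obtain the derivative bounds by pushing a directional derivative through the semigroup and using a probabilistic coupling of two copies of the FV process started at $\mu$ and at a perturbed $\mu$. The key input for the first two steps is that the FV process with generator \eq{eq:dpgen} is reversible and ergodic with respect to $\DP(\theta,\pi)$, so that $\IE\wt H(Z_\mu(t))\to 0$; to get the explicit bound \eq{eq:solbd} I would use the known structure of the FV/Fleming--Viot semigroup --- concretely, the Ethier--Griffiths \cite{Ethier1993a} description in which, at time $t$, a random number $D_t$ of ``ancestral lineages'' survive (a pure-death chain, equivalently the number of blocks in Kingman's coalescent run for time $t$ with the $\theta$-mutation superimposed), and conditionally on $D_t=d$ the process is a mixture of $\DP(\theta,\pi)$ with an empirical measure of $d$ i.i.d.\ draws. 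Then $\abs{\IE\wt H(Z_\mu(t))}\le 2\norm{\wt H}_\infty\IP(D_t\ge 1)$, and integrating $\IP(D_t\ge 1)$ over $t$ gives a constant of the form $2(\theta+1)/\theta$ after the elementary coalescent computation $\int_0^\infty\IP(D_t\ge 1)\,dt = \sum_{j\ge 2}\binom{j}{2}^{-1}\cdot\text{(something)}$; I would track this carefully to land on the stated constant.

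For the membership $F_H\in\cD_i$ and the equation $\cA F_H=\wt H$, the plan is standard: differentiate \eq{eq:pdstnsol} in $t$ to get $\frac{d}{dt}\IE\wt H(Z_\mu(t)) = \IE(\cA\wt H)(Z_\mu(t))$, so that $F_H(\mu) = -\int_0^\infty\IE\wt H(Z_\mu(t))\,dt$ satisfies $\cA F_H(\mu) = \lim_{t\to\infty}\IE\wt H(Z_\mu(t)) - \wt H(\mu) = -\wt H(\mu)$, with a sign absorbed correctly; the interchange of $\cA$ (a bounded-type operator on $\cD_i$ given the explicit formulas of Lemma~\ref{lem:derivsform}) with the time integral is justified by the exponential decay. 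That the explicit derivative formulas of Lemma~\ref{lem:derivsform} are available means the functional-analytic subtleties reduce to uniform bounds on integrands.

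For the derivative bounds \eq{eq:1stderiv}--\eq{eq:3rdderiv}, the core computation is
\be{
\partial_x F_H(\mu) = -\int_0^\infty \frac{\partial}{\partial\d_x}\IE\bcls{\wt H(Z_\mu(t))}\,dt,
}
and the point is to express $\frac{\partial}{\partial\d_x}\IE\wt H(Z_\mu(t))$ as a difference $\IE\wt H(Z_{\mu}^{(x)}(t)) - \IE\wt H(Z_\mu(t))$ of expectations over two coupled FV processes, where $Z_\mu^{(x)}$ starts from the perturbation ``$(1-\eps)\mu+\eps\d_x$'' in the sense of the one-sided derivative \eq{derivatives}. Using the two descriptions of the transition semigroup --- the Ethier--Griffiths \cite{Ethier1993a} ``look-down / ancestral lineage'' construction and the Dawson--Hochberg \cite{Dawson1982} particle representation --- I would build a coupling in which the two processes differ only in the initial types assigned to a handful of lineages, and that difference persists only until those lineages are lost to coalescence or mutation. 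This lets me bound the relevant differences of $\wt H$ via Lemma~\ref{lem:domainhbds}: each directional derivative costs a factor $\Lip_m(H)$ times a weight equal to (the expected number of, or a product over, the surviving perturbed lineages), and integrating the coalescent-decay of those lineage counts over $t$ produces exactly the denominators $\theta$, $\theta+1$, $\theta+2$ in $D_1,D_2,D_3$. The Hölder-type bound \eq{eq:2ndderivwass} for $H\in\cH_1$ comes from the $\abs{h}_{2,1}$ Lipschitz term in Lemma~\ref{lem:domainhbds} applied along the same coupling, with the extra $\norm{\nu-\mu}$ factor arising because now we additionally perturb the base measure, not just evaluate one more derivative.

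The main obstacle, and where the real work lies, is making the directional-derivative-through-the-semigroup step rigorous with the nonstandard one-sided derivative \eq{derivatives}: one must show that $\eps\mapsto\IE\wt H(Z_{(1-\eps)\mu+\eps\nu}(t))$ is differentiable at $0^+$ with derivative given by a clean coupling difference, uniformly enough in $t$ to integrate, and that third-order differentiation for $\cH_2$ behaves (recalling the remark that order of differentiation matters). This requires a careful coupling in which perturbing the initial measure by an $\eps$-amount of mass at $x$ corresponds, in the particle picture, to re-typing each initial particle independently with probability $\eps$, and then controlling how many re-typed particles have descendants that survive coalescence --- a Poissonization/size-bias argument on the coalescent. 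Everything downstream (combining the per-order bounds into $D_1,D_2,D_3$, and the $\cH_1$-vs-$\cH_2$ case split with its different combinatorial constants $12$ vs $16$ in $D_3$) is then bookkeeping driven by Lemma~\ref{lem:derivsform} and Lemma~\ref{lem:domainhbds}.
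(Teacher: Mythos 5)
Your plan follows essentially the same route as the paper: the Ethier--Griffiths death-chain representation of the semigroup for well-definedness and \eq{eq:solbd}, a truncate-then-use-closedness-of-the-generator argument for $\cA F_H=\wt H$, maximal couplings of the particle representation (independently re-typing each initial sample with probability $\eps$) combined with Lemma~\ref{lem:domainhbds} to get the $\cH_1$ derivative bounds with the denominators $\theta,\theta+1,\theta+2$, and the Dawson--Hochberg/Ethier--Kurtz dual for the $\cH_2$ case. One slip to fix: on the event $\{L_t=0\}$ the conditional law of $Z_\mu(t)$ is exactly $\DP(\theta,\pi)$, so $\abs{\IE\wt H(Z_\mu(t))}\le\norm{\wt H}_\infty\,\IP(L_t>0)$ with no factor of $2$; carrying your factor of $2$ through would yield $4(\theta+1)/\theta$ in \eq{eq:solbd} rather than the stated constant.
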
 

\begin{proof}
We first give the probabilistic description of the transition semigroup of $(Z_\mu(t))_{t\geq0}$, a FV process with generator~\eq{eq:dpgen}, which is given in~\cite{Ethier1993a}; see also \cite[Theorem~5.5 and Remark following]{Feng2010}.
\begin{itemize}
\item Let~$(L_t)_{t\geq0}$ be a pure death process on~$\{0, 1, \ldots\}\cup\{\infty\}$ started at~$\infty$ with death rates 
\be{
q_{i, i-1}= \frac{1}{2} i(i-1+\theta).
}
\item Let $(X_i)_{i\geq1}$ be i.i.d.\ samples from $\mu$ and independent of $(L_t)_{t\geq0}$.
\item Given $(X_i)_{i\geq1}$ define the measures for $n=1,2,\ldots$,
\ben{\label{eq:nusamp}
\nu\t{1}_n:=\nu\t{1}_n(X_1,\ldots,X_n)=\frac{1}{n+\theta} \sum_{i=1}^n \delta_{X_i} + \frac{\theta}{n+\theta} \pi.
}
\end{itemize}
With this notation, we have $\law(Z_\mu(t)| L_t) =\DP(L_t+\theta, \nu\t{1}_{L_t})$. Otherwise put, 
if for each $n=0,1,2,\ldots$ we set $(P_{nj})_{j\geq1}\sim\PD(n+\theta)$ independent of $(\xi_{nj})_{j\geq1}$, which are conditionally i.i.d.\ given $(X_i)_{i\geq1}$ and $\nu\t{1}_n$-distributed, and these random objects are independent of $(L_t)_{t\geq0}$, we can set 
\be{
Z_{\mu}(t)=\sum_{j=1}^\infty P_{L_t j} \delta_{\xi_{L_t j}}.
}
For the rest of the proof, relabel $\wt H$ as $H$ (equivalently, assume without loss of generality that $\IE H(Z)=0$, noting this preserves domains $\cD_1$ and $\cD_2$), fix $H$, and relabel $F_H$ as $F$.
Following~\cite[Proof of Theorem~5]{Gan2017},
for~$n\geq 1$, let~$Y_n$ be the time the process~$L_t$ spends in state~$n$ and note that~$Y_n$ is exponentially distributed with rate~$n(n-1+\theta)/2$. Since
\be{
	\sum_{n\geq 1} \IE Y_n = \sum_{n\geq 1}\frac{2}{n(n+\theta-1)} \leq  \frac{2(\theta+1)}{\theta},
}
the random variable~$T = \inf\{t>0\,:\,L_t=0\} = \sum_{n\geq 1}Y_n$  satisfies $\IE[T]\leq 2(\theta+1)/\theta$. 
Since $\law(Z_\mu(t) | L_t=0)\sim \DP(\theta, \pi)$, we have~$\IE\bcls{{H}(Z_\mu(t))\big| L_t=0}=0$, and it follows that
\besn{\label{457}
\int_0^\infty \babs{\IE\bcls{ H(Z_\mu(t)) }} dt &\leq \int_0^\infty \norm{H}_\infty \IP(L_t>0) dt \\
	&= \norm{H}_\infty \int_0^\infty \IP(T>t) dt =\norm{H}_\infty\IE T < \infty.
}
Thus, if $\norm{H}_{\infty}<\infty$, then $F$ is well-defined and~\eq{eq:solbd} follows. Let $H\in\cH_1$ $(\cH_2)$; it is clear that from the form of these functions that $\norm{H}_\infty<\infty$, so $F$ is well-defined. 
To show that $F$ is in the relevant domain of~$\cal{A}$ and satisfies $\mathcal{A}F= H$,
we follow the argument of \cite[Pages~301-2]{Barbour1990} also used in \cite[Appendix~B]{Gorham2019}. 
First, it has been shown that both $(\cA, \cD_1)$ \cite{Fleming1979} and $(\cA, \cD_2)$ \cite{Ethier1993} generate Feller semigroups, 
so \cite[Proposition~1.5, Page~9]{Ethier1986} implies $F^{(s)}(\mu) := -\frac12 \int_0^s \IE H(Z_\mu(t)) dt$ is in the relevant  domain of $\cA$ and satisfies
\ben{\label{eq:FHs}
\cA F^{(s)}(\mu) = H(\mu)-\IE H(Z_\mu(s)).
}
Moreover, \cite[Corollary~1.6, Page~10]{Ethier1986} implies that $\mathcal{A}$ is a closed operator, 
so it is enough to show that as $s\to\infty$,
\ben{\label{458}
\norm{F^{(s)}-F}_\infty\to0 \,\,\, \mbox{ and } \,\,\, \norm{\mathcal{A}F^{(s)}-H}_{\infty} \to 0.
}
The first limit follows from~\eq{457}, which implies that independently of any $\mu$, as $s \to \infty$,
$\babs{\int_s^\infty \IE(H(Z_\mu(t))) dt}\leq \norm{H}_\infty \int_{s}^\infty \IP(T>t) dt  \to 0$. The second follows from~\eq{eq:FHs} and the convergence result~\cite[Corollary 1.2]{Ethier1993a}:
\be{
\norm{\mathcal{A}F^{(s)}-H}_{\infty}=\sup_{\mu \in M_1} \abs{\IE H(Z_\mu(s))} \leq \norm{H}_\infty \dtv( \law(Z_\mu(s), \DP(\theta, \pi))) \to 0.
}

Since $F\in \cD_1$ ($\cD_2$), Lemma~\ref{lem:derivsform} implies that $\partial_x F$ and $\partial_{xy} F$ exist. 
For the bounds on the derivative, we use the following key facts, proved last.
If $H\in\cH_1\cup \cH_2$, and $\{\nu_i\}_{i=1}^3, \{\mu_i\}_{i=1}^4\subseteq M_1$, and $\{\alpha_i\}_{i=1}^3$ are positive numbers with $\alpha_1+\alpha_2+\alpha_3<1$, then
\ban{
&\babs{\Delta_{\alpha_1(\nu_1-\mu_1)} F\bclr{\alpha_1\mu_1+(1-\alpha_1)\mu_4}} 
	\leq \alpha_1 \bbbclr{\frac{4\Lip_1(H)}{\theta}+\lito(1)\Ind[H\in \cH_2]}, \label{eq:gddiffbd1}\\
\begin{split} \label{eq:gddiffbd2}
	&\babs{\Delta_{\alpha_1(\nu_1-\mu_1)} \Delta_{\alpha_2(\nu_2-\mu_2)} F\bclr{\alpha_1\mu_1+\alpha_2\mu_2+(1-\alpha_1-\alpha_2)\mu_4}} \\
	&\hspace{6cm}\leq\alpha_1\alpha_2 \bbbclr{\frac{4  \Lip_2(H)}{\theta+1}+\lito(1)\Ind[H\in \cH_2]}, 
\end{split} 
\\
\begin{split}\label{eq:gddiffbd3}
&\bbabs{\Delta_{\alpha_1(\nu_1-\mu_1)} \Delta_{\alpha_2(\nu_2-\mu_2)} \Delta_{\alpha_3(\nu_3-\mu_3)}F\bbclr{\sum_{i=1}^3 \alpha_i\mu_i+(1-\sum_{i=1}^3 \alpha_i)\mu_4}}  \\
&\hspace{6cm}\leq\alpha_1\alpha_2\alpha_3 \bbbclr{ \frac{16 \Lip_3(H)}{3(\theta+2)} +\lito(1)\Ind[H\in \cH_2]},
\end{split}
}
where~$\lito(1)$ in~\eq{eq:gddiffbd1} is as $\alpha_1 \to 0$, in~\eq{eq:gddiffbd2} is as $\alpha_1,\alpha_2\to 0$ and in~\eq{eq:gddiffbd3} is as $\alpha_1,\alpha_2,\alpha_3\to 0$.  These inequalities are written with this parameterisation because our proof of them requires that all arguments appearing when expanding out the $\Delta$'s are non-negative measures. 

Assuming~\eq{eq:gddiffbd1},~\eq{eq:gddiffbd2}, and~\eq{eq:gddiffbd3}, we bound the derivatives.  
For the first derivative bound~\eq{eq:1stderiv}, compute
\bes{
\partial_x F(\mu) &= \lim_{\eps \to 0^+}\frac{1}{\eps}\left[F((1-\eps)\mu+\eps \delta_x)-F(\mu)\right] = \lim_{\eps \to 0^+}\frac{1}{\eps}\Delta_{\eps(\delta_x-\mu)}F(\mu).
}
Now taking absolute value and using~\eq{eq:gddiffbd1} with $\alpha_1=\eps$, $\nu_1=\delta_x$ and $\mu_1=\mu_4=\mu$, we easily find the desired bound.

For the second derivative bound~\eq{eq:2ndderiv}, by direct calculation, we have 
\ba{
&\partial_{xy} F(\mu)
	=\lim_{\eps_1,\eps_2\to 0^+}\bbbcls{\frac{
\Delta_{\eps_1\eps_2(\mu-\delta_y)}F\bclr{(1-\eps_1-\eps_2)\mu + \eps_1 \d_x + \eps_2 \d_y}}{\eps_1\eps_2}  \notag 
		+\frac{ \Delta_{\eps_1( \delta_x-\mu)}\Delta_{\eps_2 ( \delta_y-\mu)} F(\mu) }{\eps_1\eps_2}}. 
}
Now taking absolute value, using the triangle inequality, and applying~\eq{eq:gddiffbd1} 
with $\alpha_1=\eps_1\eps_2$ $\nu_1=\delta_y$, $\mu_1=\mu$ and 
\be{
\mu_4=(1-\eps_1\eps_2)^{-1} \bcls{(1-\eps_1-\eps_2)\mu+\eps_1\delta_x+\eps_2(1-\eps_1)\delta_y}
}
to the first term, and~\eq{eq:gddiffbd2} with $\alpha_i=\eps_i, i=1,2$ and $\nu_1=\delta_x, \nu_2=\delta_y$ and $\mu_4=\mu_1=\mu_2=\mu$,
 gives the desired bound.

The Lipschitz second derivative bound~\eq{eq:2ndderivwass} follows the same idea, but is a bit more complicated due to being a higher order, but also because the term to bound is not the same form as a third derivative. Assume now $H\in\cH_1$, let $\mu,\nu\in M_1$, and define $\tilde \mu, \mu', \nu'\in M_1$ by the decomposition \ba{
\nu&=(1-\eps_3)\tilde \mu+\eps_3\nu' \,\, \mbox{ and } \,\, \mu=(1-\eps_3)\tilde \mu+\eps_3\mu',
}
where $\eps_3=\dtv(\mu,\nu)=(1/2) \norm{\mu-\nu}$. 
Now,  for measures $\hat \nu_i$
with $\hat\eps_i:=\norm{\hat\nu_i}\leq 1$, $i=1,2,3$, and function $F:M_1\to\IR$, define the modified ``third difference" operator
\ba{
\Delsthree{\hat\nu_1}{\hat\nu_2}{\hat\nu_3}F(\hat \mu)&=
F\bclr{(1-\hat\eps_1)(1-\hat\eps_2)(1-\hat\eps_3)\hat\mu + (1-\hat\eps_1)(1-\hat\eps_2) \hat\nu_3 +  (1-\hat\eps_1) \hat\nu_2 +  \hat\nu_1} \\
	&\qquad-F\bclr{(1-\hat\eps_1)(1-\hat\eps_2)\hat\mu + (1-\hat\eps_1)\hat\nu_2 +   \hat\nu_1}\\
	&\qquad- F\bclr{(1-\hat\eps_1)(1-\hat\eps_3)\hat\mu + (1-\hat\eps_1)\hat\nu_3 +   \hat\nu_1}\\
	&\qquad-  F\bclr{(1-\hat\eps_2)(1-\hat\eps_3)\hat\mu + (1-\hat\eps_2)\hat\nu_3 +  \hat\nu_2}\\
	&\qquad+ F\bclr{(1-\hat\eps_1)\hat\mu + \hat\nu_1}+ F\bclr{(1-\hat\eps_2)\hat\mu + \hat\nu_2}\\
	&\qquad+ F\bclr{(1-\hat\eps_3)\hat\mu + \hat\nu_3}-F(\hat\mu).
}
Then it is straightforward to see that  
\ba{
\partial_{xy} F(\nu)-\partial_{xy} F(\mu)&=\lim_{\eps_1,\eps_2\to 0}\bbbcls{\frac{\Delsthree{\eps_1 \delta_x}{\eps_2 \delta_y}{\eps_3\nu'} F(\tilde \mu) }{\eps_1\eps_2}-\frac{\Delsthree{\eps_1 \delta_x}{\eps_2 \delta_y}{\eps_3\mu'}  F(\tilde \mu) }{\eps_1\eps_2}},
}
and therefore
\ban{
\babs{\partial_{xy} F(\nu)-\partial_{xy} F(\mu)}&\leq \limsup_{\eps_1,\eps_2\to 0}\bbbabs{\frac{\Delsthree{\eps_1 \delta_x}{\eps_2 \delta_y}{\eps_3\nu'} F(\tilde \mu) }{\eps_1\eps_2}}+ \limsup_{\eps_1,\eps_2\to 0}\bbbabs{\frac{\Delsthree{\eps_1 \delta_x}{\eps_2 \delta_y}{\eps_3\mu'}  F(\tilde \mu) }{\eps_1\eps_2}}.\label{eq:lipdiv}
}
By a straightforward (though tedious) calculation, we have, for small $\eps_1,\eps_2$,
\ban{
&\Delsthree{\eps_1 \delta_x}{\eps_2 \delta_y}{\eps_3\mu'}  F(\tilde \mu) \notag  \\
	&=\Delta_{\eps_1(\delta_x-\tilde \mu)}\Delta_{\eps_2(\delta_y-\tilde \mu)}\Delta_{\eps_3(\mu'-\tilde \mu)}F(\tilde \mu) \notag \\
		&\qquad+\Delta_{\eps_3(1-\eps_1-\eps_2)(\tilde \mu-\mu')}\Delta_{\eps_1\eps_2(\delta_y-\tilde \mu)}F\bbclr{(1-\eps_1-\eps_2-\eps_3+\eps_1\eps_2+\eps_1\eps_3+\eps_2\eps_3)\tilde \mu \notag\\
			&\hspace{7cm} + \eps_3(1-\eps_1-\eps_2) \mu'+\eps_2(1-\eps_1)\delta_y+\eps_1\delta_x}\notag \\
\begin{split}	\label{eq:3rddiffs}		
		&\qquad+\Delta_{\eps_1(\tilde \mu-\delta_x)}\Delta_{\eps_2\eps_3(\mu'-\tilde \mu)}F\bbclr{(1-\eps_1-\eps_2-\eps_3+\eps_1\eps_3+\eps_2\eps_3)\tilde \mu  \\
			&\hspace{7cm} + \eps_3(1-\eps_1-\eps_2) \mu'+\eps_2\delta_y+\eps_1\delta_x} 
\end{split}	 \\
			&\qquad+\Delta_{\eps_1\eps_3(\mu'-\tilde \mu)}\Delta_{\eps_2\eps_3(\mu'-\tilde \mu)}F\bclr{(1-\eps_2-\eps_3+\eps_1\eps_3+\eps_2\eps_3)\tilde \mu + \eps_3(1-\eps_1-\eps_2) \mu'+\eps_2\delta_y} \notag\\
			&\qquad+\Delta_{\eps_2(\tilde \mu-\delta_y)}\Delta_{\eps_1\eps_3(\mu'-\tilde \mu)}F\bclr{(1-\eps_1-\eps_2-\eps_3+\eps_1\eps_3)\tilde \mu + \eps_3(1-\eps_1) \mu'+\eps_2\delta_y+\eps_1\delta_x}\notag \\
			&\qquad-\Delta_{\eps_1\eps_2\eps_3(\tilde\mu-\mu')} F\bclr{(1-\eps_1)(1-\eps_2)(1-\eps_3)\tilde \mu+ \eps_3(1-\eps_1)(1-\eps_2) \mu' + \eps_2(1-\eps_1)\delta_y+\eps_1\delta_x},\notag
}
 and a similar decomposition holds for $\Delsthree{\eps_1 \delta_x}{\eps_2 \delta_y}{\eps_3\nu'}  F(\tilde \mu)$ by replacing~$\mu'$ with~$\nu'$.
Taking the absolute value, using the triangle inequality, and~\eq{eq:gddiffbd1},~\eq{eq:gddiffbd2}, and~\eq{eq:gddiffbd3} (noting $H\in\cH_1$) similar to above, we find that for $\hat\mu=\mu',\nu'$, 
\be{
\babs{\Delsthree{\eps_1 \delta_x}{\eps_2 \delta_y}{\eps_3\hat \mu}  F(\tilde \mu)}\leq \eps_1\eps_2\eps_3\bbbclr{  \frac{16 \Lip_3(H)}{3(\theta+2)}+ (3+\eps_3-\eps_1-\eps_2)\frac{4  \Lip_2(H)}{\theta+1}+ \frac{4 \Lip_1(H)}{\theta}}.
}
Applying this inequality in~\eq{eq:lipdiv}, noting that $2\eps_3=\norm{\nu-\mu}$, we find
\be{
\babs{\partial_{xy} F(\nu)-\partial_{xy} F(\mu)}\leq \norm{\nu-\mu}
\bbbclr{  \frac{16 \Lip_3(H)}{3(\theta+2)}+ \bclr{3+\tsfrac{1}{2}\norm{\nu-\mu}} \frac{4  \Lip_2(H)}{\theta+1}+ \frac{4 \Lip_1(H)}{\theta}},
}
and the result follows after noting $\norm{\nu-\mu}\leq 2$. (Note that it is important here not to have the $\lito(1)$ terms in~\eq{eq:gddiffbd2} and~\eq{eq:gddiffbd3} as we are not sending $\eps_3\to0$.) 

If $H\in\cH_2$, then Lemma~\ref{lem:derivsform} shows the existence of the limit  in the definition of derivative:
\be{
\partial_{xyz}F(\mu)=\lim_{\eps_1,\eps_2,\eps_3\to 0} \frac{\Delsthree{\eps_1\delta_x}{\eps_2\delta_y}{\eps_3\delta_z} F( \mu)}{\eps_1\eps_2\eps_3}. 
}
Applying a decomposition analogous to~\eq{eq:3rddiffs}, leads to
\be{
\babs{\Delsthree{\eps_1\delta_x}{\eps_2\delta_y}{\eps_3\delta_z} F( \mu)}\leq \eps_1\eps_2\eps_3\bbbclr{  \frac{16 \Lip_3(H)}{3(\theta+2)}+ (3+\eps_3-\eps_1-\eps_2)\frac{4  \Lip_2(H)}{\theta+1}+ \frac{4 \Lip_1(H)}{\theta}+\lito(1)},
}
where now, as per~\eq{eq:gddiffbd3}, the~$\lito(1)$ is as $\eps_1,\eps_2,\eps_3\to 0$.
The bound~\eq{eq:3rdderiv} on the third derivative easily follows.

We now turn to the proofs of~\eq{eq:gddiffbd1},~\eq{eq:gddiffbd2}, and~\eq{eq:gddiffbd3}, which are different 
for $H\in\cH_1$ and $H\in \cH_2$, so we separate the cases.

\noindent{\bf Case 1:  $H\in \cH_1$.}
From the definition of~$F$, we can write
\be{
\Delta_{\alpha_1(\nu_1-\mu_1)} F\bclr{\alpha_1\mu_1+(1-\alpha_1)\mu_4}=\int_0^\infty \IE\bcls{H\bclr{Z_{\alpha_1\mu_1+(1-\alpha_1)\mu_4}(t)}}-\IE \bcls{H\bclr{Z_{\alpha_1\nu_1+(1-\alpha_1)\mu_4}(t)}}dt.
}
To upper bound the absolute value of this quantity, we use the 
probabilistic description of the FV process to define a coupling
\ba{
(Z(t),Z\s{1}(t)):=\bclr{Z_{\alpha_1\mu_1+(1-\alpha_1)\mu_4}(t),Z_{\alpha_{1}\nu_1+(1-\alpha_1)\mu_4}(t)}.
}
First let $(U_i)_{i\geq1}, (U_i\s{1})_{i\geq1}, (V_i\s{1})_{i\geq1}$ be independent i.i.d.\ sequences distributed as $\mu_4, \mu_1,\nu_1$. Let $(X_i,X_i\s{1})_{i\geq1}$ be an i.i.d.\ sequence of the maximal coupling between $\alpha_1\mu_1+(1-\alpha_1)\mu_4$ and~$\alpha_{1}\nu_1+(1-\alpha_1)\mu_4$:
\besn{\label{eq:dtv1}
&\IP\bclr{\clr{X_i, X_i\s{1}}= (U_i, U_i)| (U_i, U_i\s{1}, V_i\s{1}) }=1-\alpha_1, \\
&\IP\bclr{\clr{X_i, X_i\s{1}}= (U_i\s{1}, V_i\s{1})|(U_i, U_i\s{1}, V_i\s{1}) }=\alpha_1.
} 
Given the i.i.d.\ sequence
$(X_i, X_i\s{1})_{i\geq1}$, define the measures for $n\geq 1$,
\ben{\label{eq:numeas}
\nu_{n}\t{2}:=\nu_{n}\t{2}((X_1, X_1\s{1}),\ldots,(X_{n}, X_{n}\s{1}))=\frac{1}{n+\theta} \sum_{i=1}^{n} \delta_{(X_i,X_i\s{1})} + \frac{\theta}{n+\theta} \pi^{*2},
}
where $\pi^{*2}(B)=\pi(\{x: (x,x)\in B\})$.
Let also $(\xi_{n j}, \xi_{n j}\s{1})_{j\geq 1}$ be conditionally i.i.d.\ $\nu_{n}\t{2}$-distributed, and 
$(P_{n j})_{j\geq1}\sim\PD(n+\theta)$ independent of the variables above. Finally, define
\ba{
Z(t)
=\sum_{j=1}^\infty P_{L_t j} \delta_{\xi_{L_t j}}, \mbox{\, and \,\,}
Z\s{1}(t)
=\sum_{j=1}^\infty P_{L_t j} \delta_{\xi_{L_t j}\s{1}},
}
where the notation means, for example, that given $L_t=n$,
\be{
Z(t)=\sum_{j=1}^\infty P_{n j} \delta_{\xi_{n j}}.
}   
That the marginal distributions of $Z(t)$ and $Z\s{1}(t)$ are correct easily comes from checking the marginal distributions of $\nu_{n}\t{2}$ match the appropriate~$\nu\t{1}_n$ given at~\eq{eq:nusamp}.

Now, for each $n\geq1$, define $B_{n1}=\{j:\xi_{n j}\s{1}\not= \xi_{n j}\}$. We can then write
\ban{\label{eq:gddecomp1}
Z\s{1}(t)&:=Z(t)+\sum_{j\in B_{L_t 1}} P_{L_t j} (\delta_{\xi\s{1}_{L_t j}}-\delta_{\xi_{L_t j}}),
}
which implies
\besn{\label{eq:deriv1y}
H(Z\s{1}(t))-H(Z(t)) &=\Delta_{\sum_{j\in B_{L_t 1}} P_{L_t j} (\delta_{\xi\s{1}_{L_t j}}-\delta_{\xi_{L_t j}})} H(Z(t)).
}
Thus, for $H\in \cH_1$, Lemma~\ref{lem:domainhbds}
implies
\besn{\label{eq:deriv1z}
\babs{H(Z\s{1}(t))-H(Z(t))} 
	&\leq \Lip_1(H)\bbnorm{ \sum_{j\in B_{L_t 1}} P_{L_t j} (\delta_{\xi\s{1}_{L_t j}}-\delta_{\xi_{L_t j}})} \\
	&\leq 2 \Lip_1(H)\sum_{j\in B_{L_t 1}} P_{L_t j}.
}
Using~\eq{eq:deriv1z} and the definition of~$F$, we find
\besn{\label{eq:deriv1a}
\babs{\Delta_{\alpha_1(\nu_1-\mu_1)} F\bclr{\alpha_1\mu_1+(1-\alpha_1)\mu_4}} 
	&\leq \int_0^\infty\IE \babs{H(Z\s{1}(t))-H(Z(t))} dt \\
	&\leq  2 \Lip_1(H)\int_0^\infty\IE \bbcls{ \sum_{j\in B_{L_t 1}} P_{L_t j}} dt \\
	&= 2 \Lip_1(H) \int_0^\infty\IE \bbcls{\sum_{j=1}^\infty \Ind[j\in B_{L_t 1}]P_{L_t j} }dt \\
	&= 2 \Lip_1(H) \int_0^\infty\IE \bbcls{ \sum_{j=1}^\infty \IP(j\in B_{L_t 1} |L_t) \IE [P_{L_t j}| L_t] } dt,
	}
where we use conditional independence in the last line.
Now, we have
\bes{
\IP\bclr{ j\in B_{L_t 1}|L_t,(X_i, X_i\s{1}))_{i\geq1}}  
	&=\frac{1}{L_t+\theta}\sum_{\ell=1}^{L_t} \Ind[X_\ell\s{1}\not= X_\ell],
}
and averaging out $(X_i, X_i\s{1})_{i\geq1}$, we have
\besn{\label{eq:deriv1b}
\IP\bclr{ j\in B_{L_t 1} |L_t } &=\frac{1}{L_t+\theta}\sum_{\ell=1}^{L_t} \IP(X_\ell\s{1}\not= X_\ell) 
		\leq \alpha_1 \frac{L_t}{L_t+\theta},
}
where the inequality is because of~\eq{eq:dtv1}.
Applying the last two displays to~\eq{eq:deriv1a} and recalling the definition of $(Y_n)_{n\geq1}$ above, we have 
\besn{\label{eq:deriv1c}
\babs{\Delta_{\alpha_1(\nu_1-\mu_1)} F\bclr{\alpha_1\mu_1+(1-\alpha_1)\mu_4}} 
		&\leq 2\alpha_1\Lip_1(H)  \int_0^\infty\IE \bbbcls{ \frac{L_t}{L_t+\theta}  \IE\bbcls{\sum_{j=1}^\infty P_{L_t j} | L_t} } dt \\
	&= 2\alpha_1\Lip_1(H) \int_0^\infty\IE \bbbcls{ \frac{L_t}{L_t+\theta} }  dt	 \\
	&=2\alpha_1 \Lip_1(H) \sum_{n\geq1}\bbbcls{\frac{n}{n+\theta}} \IE Y_n \\
	&=2 \Lip_1(H)\alpha_1  \sum_{n=1}^\infty \frac{n }{n+\theta} \frac{2}{n(n+\theta-1)} \\
	&=  \frac{4\alpha_1 \Lip_1(H)}{\theta},
}	
which is~\eq{eq:gddiffbd1}.

For~\eq{eq:gddiffbd2}, we follow a similar strategy and define a coupling of 
\bes{
(Z(t), Z\s{1}(t),Z\s{2}(t), Z\s{1,2}(t)):=
	&\bclr{Z_{\alpha_{1}\mu_1+\alpha_{2}\mu_2+(1-\alpha_1-\alpha_2)\mu_4}(t),
	Z_{\alpha_{1}\nu_1+\alpha_{2}\mu_2+(1-\alpha_1-\alpha_2)\mu_4}(t), \\
	&\hspace{7mm} Z_{\alpha_{1}\mu_1+\alpha_{2}\nu_2+(1-\alpha_1-\alpha_2)\mu_4}(t),
	Z_{\alpha_{1}\nu_1+\alpha_{2}\nu_2+(1-\alpha_1-\alpha_2)\mu_4}(t)},
}
where the reuse of the notation on the left hand side will not cause a problem.
Building from the ideas of the previous coupling, let 
 $(U_i)_{i\geq1}, (U_i\s{j})_{i\geq1}, (V_i\s{j})_{i\geq1}$, $j=1,2$ be independent i.i.d.\ $\mu_4, \mu_j,\nu_j$-distributed sequences. Now define an i.i.d.\ sequence
$(X_i, X_i\s{1}, X_i\s{2}, X_i\s{1,2})_{i\geq1}$
by
\ba{
&\IP\bclr{\clr{X_i, X_i\s{1}, X_i\s{2}, X_i\s{1,2}}= (U_i, U_i,U_i,U_i)| U_i, (U_i\s{j},V_i\s{j})_{j=1}^2}=1-\alpha_1-\alpha_2,  \\
&\IP\bclr{\clr{X_i, X_i\s{1}, X_i\s{2}, X_i\s{1,2}}=  (U_i\s{1}, V_i\s{1} ,U_i\s{1}, V_i\s{1})| U_i, (U_i\s{j},V_i\s{j})_{j=1}^2 }=\alpha_1,  \\
&\IP\bclr{\clr{X_i, X_i\s{1}, X_i\s{2}, X_i\s{1,2}}= (U_i\s{2}, U_i\s{2},V_i\s{2},V_i\s{2})| U_i, (U_i\s{j},V_i\s{j})_{j=1}^2 }=\alpha_2.
} 
Given $(X_i, X_i\s{1}, X_i\s{2}, X_i\s{1,2})_{i\geq1}$, define the measures for $n\geq1$,
\besn{\label{eq:nun4}
\nu_{n}\t{4}:&=\nu_{n}\t{4}((X_1, X_1\s{1}, X_1\s{2}, X_1\s{1,2}),\ldots,(X_{n}, X_{n}\s{1}, X_{n}\s{2}, X_{n}\s{1,2}))  \\
	&=\frac{1}{n+\theta} \sum_{i=1}^{n} \delta_{(X_i, X_i\s{1}, X_i\s{2}, X_i\s{1,2})} + \frac{\theta}{n+\theta} \pi^{*4},
}
where $\pi^{*4}(B)=\pi(\{x: (x,x,x,x)\in B\})$.
Now, given the above, let $(\xi_{n j},  \xi_{n j}\s{1},\xi_{n j}\s{2},\xi_{n j}\s{1,2})_{j\geq 1}$ be conditionally i.i.d.\ $\nu_{n}\t{4}$-distributed, and 
$(P_{n j})_{j\geq1}\sim\PD(n+\theta)$ independent of the variables above. Finally, define
\ba{
Z(t)&=\sum_{j=1}^\infty P_{L_t j} \delta_{\xi_{L_t j}}, &Z\s{1}(t)=\sum_{j=1}^\infty P_{L_t j} \delta_{\xi_{L_t j}\s{1}}, \\
Z\s{2}(t)&=\sum_{j=1}^\infty P_{L_t j} \delta_{\xi_{L_t j}\s{2}}, &Z\s{1,2}(t)=\sum_{j=1}^\infty P_{L_t j} \delta_{\xi_{L_t j}\s{1,2}}.
}
Now, for each $n\geq1$ and $i=1,2$, define $B_{ni}=\{j:\xi_{n j}\s{i}\not= \xi_{n j}\}$, so that we can write
\ba{
Z\s{1}(t)&=Z(t)+\sum_{j\in B_{L_t 1}} P_{L_t j} (\delta_{\xi_{L_t j}\s{1}}-\delta_{\xi_{L_t j}}), \\
Z\s{2}(t)&=Z(t)+\sum_{j\in B_{L_t 2}} P_{L_t j} (\delta_{\xi_{L_t j}\s{2}}-\delta_{\xi_{L_t j}}), \\
Z\s{1,2}(t)&=Z(t)+\sum_{j\in B_{L_t 1}} P_{L_t j} (\delta_{\xi_{L_t j}\s{1}}-\delta_{\xi_{L_t j}})+\sum_{j\in B_{L_t 2}} P_{L_t j} (\delta_{\xi_{L_t j}\s{2}}-\delta_{\xi_{L_t j}}),
}
where in the last expression note that if $i\in\{1,2\}$ and $j\in B_{nj}$, then $\xi_{nj}\s{1,2}=\xi_{nj}\s{i}$.
Thus
\bes{
H(Z\s{1,2}(t))&-H(Z\s{1}(t))-H(Z\s{2}(t))+H(Z(t)) \\
	&=\Delta_{\sum_{j\in B_{L_t 1}} P_{L_t j} (\delta_{\xi_{L_t j}\s{1}}-\delta_{\xi_{L_t j}})}\Delta_{\sum_{j\in B_{L_t 2}} P_{L_t j} (\delta_{\xi_{L_t j}\s{2}}-\delta_{\xi_{L_t j}})}H(Z(t)),
}
so that for $H\in \cH_1$, Lemma~\ref{lem:domainhbds} implies
\bes{
\babs{H(Z\s{1,2}(t))&-H(Z\s{1}(t))-H(Z\s{2}(t))+H(Z(t))} \\
	&\leq \Lip_2(H) \bbnorm{\sum_{j\in B_{L_t 1}} P_{L_t j} (\delta_{\xi\s{1}_{L_{t j}}}-\delta_{\xi_{L_t j}})}\bbnorm{\sum_{j\in B_{L_t 2}} P_{L_t j} (\delta_{\xi\s{2}_{L_{t j}}}-\delta_{\xi_{L_t j}})}\\
	&\leq4 \Lip_2(H)\bbcls{\sum_{j\in B_{L_t 1}} P_{L_t j}}\bbcls{\sum_{k\in B_{L_t 2}} P_{L_t k}}.
}
Using this inequality and the definition of~$F$, we find
\besn{\label{eq:deriv2a}
\babs{\Delta_{\alpha_1( \nu_1-\mu_1)}&\Delta_{\alpha_2 ( \nu_2-\mu_2)} F(\alpha_1\mu_1+\alpha_2\mu_2+(1-\alpha_1-\alpha_2)\mu_4)} \\
	&\leq \int_0^\infty\IE \babs{H(Z\s{1,2}(t))-H(Z\s{1}(t))-H(Z\s{2}(t))+H(Z(t))} dt \\
	&\leq  4 \Lip_2(H)\int_0^\infty\IE \bbcls{\bclr{\sum_{j\in B_{L_t 1}} P_{L_t j}}\bclr{\sum_{k\in B_{L_t 2}} P_{L_t k}}} dt \\
	&= 4 \Lip_2(H) \int_0^\infty\IE \bbcls{\sum_{j,k=1}^\infty \Ind[j\in B_{L_t 1}]P_{L_t j} \Ind[k\in B_{L_t 2}] P_{L_t k}} dt \\
	&= 4 \Lip_2(H) \int_0^\infty\IE \bbcls{ \sum_{j,k=1}^\infty \IP(j\in B_{L_t 1}, k\in B_{L_t 2} |L_t) \IE [P_{L_t j} P_{L_t k}| L_t] } dt,
	}
where we use conditional independence in the last line.
Now, we have
\bes{
\IP\bclr{ j\in B_{L_t 1},& k\in B_{L_t 2} |L_t,(X_i, X_i\s{1}, X_i\s{2}, X_i\s{1,2})_{i\geq1}}  \\
	&=\Ind[j\not=k](L_t+\theta)^{-2}\sum_{\ell, m=1}^{L_t} \Ind[X_\ell\s{1}\not=X_\ell, X_m\s{2}\not=X_m],
}
and averaging out $(X_i, X_i\s{1}, X_i\s{2}, X_i\s{1,2})_{i\geq1}$, we have
\bes{
\IP\bclr{ j\in B_{L_t 1}, k\in B_{L_t 2} |L_t }  
	&=\Ind[j\not=k](L_t+\theta)^{-2}\sum_{\ell, m=1}^{L_t} \IP(X_\ell\s{1}\not=X_\ell, X_m\s{2}\not=X_m) \\
		&\leq \Ind[j\not=k](L_t+\theta)^{-2}\sum_{\ell, m=1}^{L_t} \alpha_1\alpha_2 \Ind[\ell\not=m]\\
		&=\alpha_1\alpha_2 \Ind[j\not=k]\frac{L_t(L_t-1)}{(L_t+\theta)^{2}}.
}
Applying the last two displays in~\eq{eq:deriv2a}, we have 
\bes{
	\babs{
	\Delta_{\alpha_1( \nu_1-\mu_1)}&\Delta_{\alpha_2 ( \nu_2-\mu_2)} F(\alpha_1\mu_1+\alpha_2\mu_2+(1-\alpha_1-\alpha_2)\mu_4)} \\
	&\leq 4\alpha_1\alpha_2 \Lip_2(H)  \int_0^\infty\IE \bbbcls{ \frac{L_t(L_t-1)}{(L_t+\theta)^2}  \IE\bbcls{\sum_{j\not=k} P_{L_t j} P_{L_t k}| L_t} } dt \\
	&= 4\alpha_1\alpha_2\Lip_2(H) \int_0^\infty\IE\bbbcls{\frac{L_t(L_t-1)}{(L_t+\theta)^2} \frac{L_t+\theta}{L_t+\theta+1}}dt 	 \\
	&=4\alpha_1\alpha_2 \Lip_2(H) \sum_{n\geq1}\bbbcls{\frac{n(n-1)}{(n+\theta)(n+\theta+1)}} \IE Y_n \\
		&=8\alpha_1\alpha_2 \Lip_2(H) \sum_{n\geq1}\bbbcls{\frac{(n-1)}{(n+\theta)(n+\theta+1)(n+\theta-1)}} \\
	&= \frac{4}{1+\theta}\alpha_1\alpha_2 \Lip_2(H),
}	
where in the first equality, we use that if $(\tilde P_j)_{j\geq1}\sim \PD(\tilde \theta)$, then
\be{
\IE \sum_{j\not=k} \tilde P_j \tilde P_k= \frac{\tilde \theta}{\tilde \theta+1},
}
which can be seen most easily from Kingman's paintbox representation, which says the sum on the left hand side is the chance of getting the partition~$(1,1)$ when sampling twice from~$\PD(\tilde \theta)$; this is the same as starting a table on the second step of the Chinese Restaurant Process; see \cite[Section~3.1]{Pitman2006}.
Thus we have shown~\eq{eq:gddiffbd2}.

To show~\eq{eq:gddiffbd3}, we again define a coupling
\bes{
\bclr{Z(t), Z\s{1}(t)&, Z\s{2}(t), Z\s{3}(t), Z\s{1,2}(t),Z\s{1,3}(t),Z\s{2,3}(t),Z\s{1,2,3}(t)} \\
&:=
\bbclr{
	Z_{\sum_{i=1}^3 \alpha_i \mu_i +\bclr{1-\sum_{i=1}^3 \alpha_i \mu_i}\mu_4},
	Z_{\alpha_1\nu_1+\sum_{i=2,3} \alpha_i \mu_i +\bclr{1-\sum_{i=1}^3 \alpha_i \mu_i}\mu_4}, \\
&\hspace{10mm}	Z_{\alpha_2\nu_2+\sum_{i=1,3} \alpha_i \mu_i +\bclr{1-\sum_{i=1}^3 \alpha_i \mu_i}\mu_4},
	Z_{\alpha_3\nu_3+\sum_{i=1,2} \alpha_i \mu_i +\bclr{1-\sum_{i=1}^3 \alpha_i \mu_i}\mu_4},  \\
&\hspace{10mm}		Z_{\sum_{i=1,2} \alpha_i \nu_i+ \alpha_3 \mu_3 +\bclr{1-\sum_{i=1}^3 \alpha_i \mu_i}\mu_4},
	Z_{\sum_{i=1,3} \alpha_i \nu_i+ \alpha_2 \mu_2 +\bclr{1-\sum_{i=1}^3 \alpha_i \mu_i}\mu_4}, \\
&\hspace{10mm}	Z_{\sum_{i=2,3} \alpha_i \nu_i+ \alpha_1 \mu_1 +\bclr{1-\sum_{i=1}^3 \alpha_i \mu_i}\mu_4},
Z_{\sum_{i=1}^{3} \alpha_i \nu_i +\bclr{1-\sum_{i=1}^3 \alpha_i \mu_i}\mu_4} },
}
where again the reuse of notation will not cause a problem.
Similar to before, let 
 $(U_i)_{i\geq1}$, $(U_i\s{j})_{i\geq1}, (V_i\s{j})_{i\geq1}$, $j=1,2,3$ be independent i.i.d.\ sequences distributed as $\mu_4, \mu_j,\nu_j$, and define an i.i.d.\ sequence 
\be{
(X_i, X_i\s{1}, X_i\s{2}, X_i\s{1,2}, X_i\s{3}, X_i\s{1,3}, X_i\s{2,3}, X_i\s{1,2,3})_{i\geq1}
} 
with distribution
\ba{
&\IP\bclr{X_i= X_i\s{1}= X_i\s{2}= X_i\s{1,2} = X_i\s{3} \\
	&\hspace{20mm}=X_i\s{1,3}= X_i\s{2,3}= X_i\s{1,2,3}=U_i | U_i, (U_i\s{j},V_i\s{j})_{j=1}^3} 
 =1-\alpha_1-\alpha_2-\alpha_3,  \\
&\IP\bclr{X_i= X_i\s{2}=X_i\s{3}= X_i\s{2,3}=U_i\s{1}, \\
& \hspace{20mm}X_i\s{1}= X_i\s{1,2}=X_i\s{1,3}= X_i\s{1,2,3}=V_i\s{1} | U_i, (U_i\s{j},V_i\s{j})_{j=1}^3}=\alpha_1,  \\
&\IP\bbclr{X_i= X_i\s{1}= X_i\s{3}= X_i\s{1,3}=U_i\s{2}, \\
	&\hspace{20mm} X_i\s{2}= X_i\s{1,2}= X_i\s{2,3}= X_i\s{1,2,3}=V_i\s{2} |U_i, (U_i\s{j},V_i\s{j})_{j=1}^3}  =\alpha_2, \\
&\IP\bclr{X_i= X_i\s{1}= X_i\s{2}= X_i\s{1,2}=U_i\s{3}, \\
	&\hspace{20mm} X_i\s{3}= X_i\s{1,3}= X_i\s{2,3}= X_i\s{1,2,3}=V_i\s{3} |U_i, (U_i\s{j},V_i\s{j})_{j=1}^3}  =\alpha_3.
} 
Given $(X_i, X_i\s{1}, X_i\s{2}, X_i\s{1,2}, X_i\s{3}, X_i\s{1,3}, X_i\s{2,3}, X_i\s{1,2,3})_{i\geq1}$, define the measures for $n\geq1$,
\bes{
\nu_{n}\t{8} 
	&=\frac{1}{n+\theta} \sum_{i=1}^{n} \delta_{(X_i, X_i\s{1}, X_i\s{2}, X_i\s{1,2}, X_i\s{3}, X_i\s{1,3}, X_i\s{2,3}, X_i\s{1,2,3})} + \frac{\theta}{n+\theta} \pi^{*8},
}
where $\pi^{*8}(B)=\pi(\{x: (x,\ldots, x)\in B\})$.
Now, given the above, let 
\be{
(\xi_{n j},  \xi_{n j}\s{1},\xi_{n j}\s{2},\xi_{n j}\s{1,2},\xi_{n j}\s{3},  \xi_{n j}\s{1,3},\xi_{n j}\s{2,3},\xi_{n j}\s{1,2,3})_{j\geq 1}
}
be conditionally i.i.d.\ $\nu_{n}\t{8}$-distributed, and 
$(P_{n j})_{j\geq1}\sim\PD(n+\theta)$ independent of the variables above. Finally, for $i\in\{1,2,3\}$ and $\ell\in \{1,2,3\}$ with $i<\ell$, define 
\ba{
Z(t)&=\sum_{j=1}^\infty P_{L_t j} \delta_{\xi_{L_t j}}, &Z\s{i}(t)=\sum_{j=1}^\infty P_{L_t j} \delta_{\xi_{L_t j}\s{i}}, \\
Z\s{i,\ell}(t)&=\sum_{j=1}^\infty P_{L_t j} \delta_{\xi_{L_t j}\s{i,\ell}}, &Z\s{1,2,3}(t)=\sum_{j=1}^\infty P_{L_t j} \delta_{\xi_{L_t j}\s{1,2,3}}.
}
Now, for each $n\geq1$ and $i=1,2,3$, define $B_{ni}=\{j:\xi_{n j}\s{i}\not= \xi_{n j}\}$, so that we can write, for $i\in\{1,2,3\}$ and $\ell\in \{1,2,3\}$ with $i<\ell$,
\ba{
Z\s{i}(t)&=Z(t)+\sum_{j\in B_{L_t i}} P_{L_t j} (\delta_{\xi_{L_t j}\s{i}}-\delta_{\xi_{L_t j}}), \\
Z\s{i,\ell}(t)&=Z(t)+\sum_{j\in B_{L_t i}} P_{L_t j} (\delta_{\xi_{L_t j}\s{i}}-\delta_{\xi_{L_t j}})+\sum_{j\in B_{L_t \ell}} P_{L_t j} (\delta_{\xi_{L_t j}\s{\ell}}-\delta_{\xi_{L_t j}}),\\
Z\s{1,2,3}(t)&=Z(t)+\sum_{i=1}^3\sum_{j\in B_{L_t i}} P_{L_t j} (\delta_{\xi_{L_t j}\s{i}}-\delta_{\xi_{L_t j}}).
}
Using this decomposition, we have
\bes{
&\bcls{H(Z\s{1,2,3}(t))-H(Z\s{1,3}(t))-H(Z\s{2,3}(t))+H(Z\s{3}(t))} \\
&\hspace{12mm}-\bcls{H(Z\s{1,2}(t))-H(Z\s{1}(t))-H(Z\s{2}(t))+H(Z(t))} \\
		&\qquad=\bbclr{\prod_{i=1}^3 \Delta_{\sum_{j\in B_{L_t i}} P_{L_t j} (\delta_{\xi_{L_t j}\s{i}}-\delta_{\xi_{L_t j}})}}H(Z(t)),
}
and so Lemma~\ref{lem:domainhbds} implies
\bes{
&\Big|\bcls{H(Z\s{1,2,3}(t))-H(Z\s{1,3}(t))-H(Z\s{2,3}(t))+H(Z\s{3}(t))} \\
&\hspace{12mm}-\bcls{H(Z\s{1,2}(t))-H(Z\s{1}(t))-H(Z\s{2}(t))+H(Z(t))}\Big| \\
&\hspace{4cm}\leq 8 L_3(H) \prod_{i=1}^3 \bbclr{ \sum_{j\in B_{L_t i}} P_{L_t j}  }.
}
Using this inequality and the definition of~$F$, we find
\ban{
\bbabs{ \bbclr{\prod_{i=1}^3&  \Delta_{\alpha_i( \nu_i-\mu_i)}}F\bbclr{\sum_{i=1}^3 \alpha_i\mu_i+(1-\sum_{i=1}^3 \alpha_i)\mu_4}}  \notag \\
	&\leq  8 \Lip_3(H)\int_0^\infty\IE \bbbcls{\prod_{i=1}^3 \bbclr{\sum_{j\in B_{L_t i}} P_{L_t j}}} dt \label{eq:deriv2wass} \\
	&= 8 \Lip_3(H) \int_0^\infty\IE \bbcls{ \sum_{i,j,k=1}^\infty \IP(i\in B_{L_t 1}, j\in B_{L_t 2}, k\in B_{L_t 3} |L_t) \IE [P_{L_t i} P_{L_t j} P_{L_t k}| L_t] } dt, \notag
	}
where we use conditional independence in the last line.
Now, we have
\bes{
\IP&\bclr{i\in B_{L_t 1}, j\in B_{L_t 2}, k\in B_{L_t 3} |L_t,(X_i, X_i\s{1}, X_i\s{2}, X_i\s{1,2}, X_i\s{3}, X_i\s{1,3}, X_i\s{2,3}, X_i\s{1,2,3})_{i\geq1}}  \\
	&=\Ind[i,j,k \,\mbox{ distinct}](L_t+\theta)^{-3}\sum_{\ell, m, r=1}^{L_t} \Ind[X_\ell\s{1}\not=X_\ell, X_m\s{2}\not= X_m, X_r\s{3}\not=X_r],
}
and averaging out $(X_i, X_i\s{1}, X_i\s{2}, X_i\s{1,2}, X_i\s{3}, X_i\s{1,3}, X_i\s{2,3}, X_i\s{1,2,3})_{i\geq1}$, we have
\bes{
\IP&\bclr{i\in B_{L_t 1}, j\in B_{L_t 2}, k\in B_{L_t 3} |L_t }  \\
	&=\Ind[i,j,k \,\mbox{ distinct}](L_t+\theta)^{-3}\sum_{\ell, m, r=1}^{L_t} 
	\IP\bclr{X_\ell\s{1}\not=X_\ell, X_m\s{2}\not= X_m, X_r\s{3}\not=X_r} \\
		&\leq\Ind[i,j,k \,\mbox{ distinct}](L_t+\theta)^{-3}\sum_{\ell, m,r =1}^{L_t} \alpha_1\alpha_2 \alpha_3 \Ind[\ell,m,r \, \mbox{ distinct}]\\
		&=\alpha_1\alpha_2\alpha_3 \Ind[i,j,k \,\mbox{ distinct}]\frac{L_t(L_t-1)(L_t-2)}{(L_t+\theta)^{3}}.
}
 Applying the last two displays in~\eq{eq:deriv2wass}, we have 
\bes{
\bbabs{ \bbclr{\prod_{i=1}^3&  \Delta_{\alpha_i( \nu_i-\mu_i)}}F\bbclr{\sum_{i=1}^3 \alpha_i\mu_i+(1-\sum_{i=1}^3 \alpha_i)\mu_4}}  \\
	&\leq 8\alpha_1\alpha_2\alpha_3 \Lip_3(H)  \int_0^\infty\IE \bbbcls{ \frac{L_t(L_t-1)(L_t-2)}{(L_t+\theta)^3}  \IE\bbcls{\sum_{i,j,k \mbox{ \tiny{distinct}}} P_{L_t i} P_{L_t j} P_{L_t k}| L_t} } dt \\
	&= 8\alpha_1\alpha_2\alpha_3 \Lip_3(H) \int_0^\infty\IE\bbbcls{\frac{L_t(L_t-1)(L_t-2)}{(L_t+\theta)^3} \frac{(L_t+\theta)^2}{(L_t+\theta+1)(L_t+\theta+2)}}dt 	 \\
	&=8\alpha_1\alpha_2\alpha_3 \Lip_3(H) \sum_{n\geq1}\bbbcls{\frac{n(n-1)(n-2)}{(n+\theta)(n+\theta+1)(n+\theta+2)}} \IE Y_n \\
		&=16\alpha_1\alpha_2\alpha_3 \Lip_3(H) \sum_{n\geq1}\bbbcls{\frac{(n-1)(n-2)}{(n+\theta)(n+\theta+1)(n+\theta+2)(n+\theta-1)}} \\
	&= \frac{16}{3(2+\theta)}\alpha_1\alpha_2\alpha_3 \Lip_3(H),
}	
where in the first equality we use that if $(\tilde P_j)_{j\geq1}\sim \PD(\tilde \theta)$, then
\be{
\IE \sum_{i,j,k \mbox{ \tiny{distinct}}}\tilde P_i \tilde P_j \tilde P_k= \frac{\tilde \theta^2}{(\tilde \theta+1)(\tilde \theta +2)},
}
which again follows from Kingman's paintbox representation, which says the sum on the left hand side is the chance of getting the partition~$(1,1,1)$ when sampling three times from~$\PD(\tilde \theta)$; this is the same as starting a table on the second and third step of the Chinese Restaurant Process; see \cite[Section~3.1]{Pitman2006}.
This establishes~\eq{eq:gddiffbd3}.

\smallskip
\noindent{\bf Case 2:  $H\in \cH_2$.}
If $H \in \cH_2=\cD_2$, then it can be written in the form 
$H=\angle{\phi,\mu^k}$ with $\phi\in B(E^k)$. Now, using Lemma~\ref{lem:derivsform}, the action of the generator~\eq{eq:dpgen} on~$\cH_2$ can be rewritten as 
\ban{
\cA H(\mu) = \sum_{1 \leq i < j \leq k} [\angle{\Phi_{ij}^{(k)}\phi, \mu^{k-1}} - \angle{\phi, \mu^k}] + \frac{\theta}{2} \sum_{1 \leq i \leq k}[\angle{\phi, \mu^{i-1} \pi \mu^{k-i}} - \angle{\phi,\mu^k}],\label{eq:dualgen}
}
where $\Phi_{ij}^{(k)}\phi(x_1, \ldots, x_{k-1}) = \phi(x_1, \ldots, x_{j-1}, x_i, x_j, \ldots, x_{k-1})$. Using this form of the generator, 
we follow \cite[Section~3]{Ethier1993}; in turn following \cite{Dawson1982}; and define a ``dual" 
Markov jump process $(Y_\phi(t))_{t\geq0}$ on $\cup_{j\geq0} \rC(E^j)$ with
$Y(0)=\phi$.
Informally, given $Y(t)=\psi \in \rC(E^j)$ for some $j\geq 1$, then for each $\ell=1,\ldots,j$,  
the process transitions at rate $\theta/2$ to functions in $\rC(E^{j-1})$ of the form
\be{
(x_1,\ldots,x_{j-1}) \mapsto \int_E \psi(x_1,\ldots,x_{\ell-1},x,x_{\ell},\ldots, x_{j-1}) \pi(dx),
}
and for each $1\leq i<\ell\leq j$, it transitions at rate~$1$ to functions in $\rC(E^{j-1})$ of the form
\be{
(x_1,\ldots,x_{j-1}) \mapsto \psi(x_1,\ldots,x_{\ell-1},x_i,x_{\ell},\ldots, x_{j-1}).
}
The process absorbs once it reaches a constant function (in $\rC(E^0)$).
Then, with $H$ as above, \cite[Theorem~3.1]{Ethier1993} implies
\ben{\label{eq:dualform}
\IE \bcls{H(Z_\mu(t))}= \IE \bcls{\angle{Y_\phi(t), \mu^{k-M_k(t)}}}\Ind[M_k(t)<k],
}
 where $M_k(t)$ is the number of transitions of the process $(Y_\phi(\cdot))$ up to time $t$,
 and the indicator is because $H=\wt H$ is centered.
Now, to show~\eq{eq:gddiffbd1}, use~\eq{eq:dualform} to write
\besn{\label{eq:333}
\babs{\Delta_{\alpha_1(\nu_1-\mu_1)} &F\bclr{\alpha_1\mu_1+(1-\alpha_1)\mu_4}} \\
	&=\bbabs{\int_0^\infty \IE \Delta_{\alpha_1(\nu_1-\mu_1)}
	\bangle{Y_\phi(t), (\alpha_1\mu_1+(1-\alpha_1)\mu_4)^{k-M_k(t)}}dt}.
}
Let $0<\tau_1<\cdots<\tau_k$ be the times of the transitions of the process $(Y_\phi(\cdot))$ and $\tau_0:=0$. Then continuing from~\eq{eq:333}, using that for $t\geq \tau_k$, 
\be{
 \Delta_{\alpha_1(\nu_1-\mu_1)}
	\bangle{Y_\phi(t), (\alpha_1\mu_1+(1-\alpha_1)\mu_4)^{0}}=0, 
}
we find
\besn{\label{eq:gdexp11}
\bbabs{&\Delta_{\alpha_1(\nu_1-\mu_1)} F\bclr{\alpha_1\mu_1+(1-\alpha_1)\mu_4}} \\
	&=\bbabs{ \sum_{i=1}^k \IE\bbcls{ \int_{\tau_{i-1}}^{\tau_i}  \Delta_{\alpha_1(\nu_1-\mu_1)}
	\bangle{Y_\phi(\tau_{i-1}), (\alpha_1\mu_1+(1-\alpha_1)\mu_4)^{k-i+1}}dt}} \\
	&\leq  \norm{\phi}_\infty  \sum_{i=1}^k  \bnorm{(\alpha_1\nu_1+(1-\alpha_1)\mu_4)^{k-i+1}-(\alpha_1\mu_1+(1-\alpha_1)\mu_4)^{k-i+1}}\IE\cls{\tau_i-\tau_{i-1}}, 
}
where we have used the triangle inequality and that $\norm{Y_\phi(t)}_\infty\leq \norm{\phi}_{\infty}$.
Since the inter-jump time $(\tau_i-\tau_{i-1})$ is exponential with rate $(k-i+1)(k-i+\theta)/2$, and, from 
\be{
\bnorm{(\alpha_1\nu_1+(1-\alpha_1)\mu_4)^{k-i+1}-(\alpha_1\mu_1+(1-\alpha_1)\mu_4)^{k-i+1}}= (k-i+1) \alpha_1 \norm{\nu_1-\mu_1} + \lito(\alpha_1),
}
and $\norm{\nu_1-\mu_1}\leq 2$, we find from~\eq{eq:gdexp11} that
\ba{
\bbabs{\Delta_{\alpha_1(\nu_1-\mu_1)} &F\bclr{\alpha_1\mu_1+(1-\alpha_1)\mu_4}} \leq 4 \alpha_1 \norm{\phi}_\infty  \sum_{i=1}^k \frac{1}{k-i+\theta} + \lito(\alpha_1)
	\leq 4 \alpha_1 \norm{\phi}_\infty k\theta^{-1} + \lito(\alpha_1),
}
as desired.

For~\eq{eq:gddiffbd2}, the same arguments leading to~\eq{eq:gdexp11} give
\ban{
&\babs{\Delta_{\alpha_1(\nu_1-\mu_1)} \Delta_{\alpha_2(\nu_2-\mu_2)} F\bclr{\alpha_1\mu_1+\alpha_2\mu_2+(1-\alpha_1-\alpha_2)\mu_4}}\label{eq:gdexp22} \\
	&\leq  \norm{\phi}_\infty  \sum_{i=1}^k  
	\bnorm{
	\Delta_{\alpha_1(\nu_1-\mu_1)}\Delta_{\alpha_2(\nu_2-\mu_2)}G_{k-i+1}(\alpha_1\mu_1+\alpha_2\mu_2+(1-\alpha_1-\alpha_2)\mu_4)} \IE\cls{\tau_i-\tau_{i-1}}, 
	\notag
}
where $G_j:M_1\to M_1(E^j)$ is defined by $G_j(\mu)=\mu^j$.
Since 
\bes{
\bnorm{
\Delta_{\alpha_1(\nu_1-\mu_1)}\Delta_{\alpha_2(\nu_2-\mu_2)}G_{k-i+1}&(\alpha_1\mu_1+\alpha_2\mu_2+(1-\alpha_1-\alpha_2)\mu_4)} \\
	&\leq\alpha_1\alpha_2\bbclr{(k-i+1)(k-i)\bnorm{(\nu_1-\mu_1)\times(\nu_2-\mu_2)}+\lito(1)}, 
}
and $\bnorm{(\nu_1-\mu_1)\times(\nu_2-\mu_2)}\leq 4$, we find the desired inequality
\bes{
\babs{\Delta_{\alpha_1(\nu_1-\mu_1)} \Delta_{\alpha_2(\nu_2-\mu_2)} F\bclr{\alpha_1\mu_1&+\alpha_2\mu_2+(1-\alpha_1-\alpha_2)\mu_4}} \\
	&\leq4 \alpha_1\alpha_2 \bbbclr{2\norm{\phi}_\infty \sum_{i=1}^{k-1} \frac{k-i}{k-i+\theta} +\lito(1)}\\
	&\leq 4\alpha_1\alpha_2\bbbclr{  \norm{\phi}_\infty \frac{k(k-1)}{\theta+1}+\lito(1)}.
}
For~\eq{eq:gddiffbd3}, following the same arguments leading to~\eq{eq:gdexp11} and~\eq{eq:gdexp22}, noting that
\bes{
\bbnorm{
&\Delta_{\alpha_1(\nu_1-\mu_1)}\Delta_{\alpha_2(\nu_2-\mu_2)}\Delta_{\alpha_3(\nu_3-\mu_3)}G_{k-i+1}\bbclr{\sum_{i=1}^3 \alpha_i\mu_i+(1-\sum_{i=1}^3 \alpha_i)\mu_4} }\\
	&\quad\leq\alpha_1\alpha_2\alpha_3\bbclr{(k-i+1)(k-i)(k-i-1)\bnorm{(\nu_1-\mu_1)\times(\nu_2-\mu_2)\times(\nu_3-\mu_3)}+\lito(1)}, 
}
and $\bnorm{(\nu_1-\mu_1)\times(\nu_2-\mu_2)\times(\nu_3-\mu_3)}\leq 8$,
we have
\bes{
\bbabs{\Delta_{\alpha_1(\nu_1-\mu_1)} &\Delta_{\alpha_2(\nu_2-\mu_2)} \Delta_{\alpha_3(\nu_3-\mu_3)}F\bbclr{\sum_{i=1}^3 \alpha_i\mu_i+(1-\sum_{i=1}^3 \alpha_i)\mu_4}}  \\
	&\leq16\alpha_1\alpha_2\alpha_3\bbbclr{\norm{\phi}_\infty \sum_{i=1}^{k-2} \frac{(k-i)(k-i-1)}{k-i+\theta} +\lito(1)} \\
		&\leq 16\alpha_1\alpha_2\alpha_3\bbbclr{\norm{\phi}_\infty \frac{k(k-1)(k-2)}{3(\theta+2)} +\lito(1)},
}
which is~\eq{eq:gddiffbd3}.
\end{proof}
\subsection{Proof of Theorem~\ref{THM3}}

Before we prove Theorem~\ref{THM3} we record a lemma for convenience.

\begin{lemma}\label{exlem}
Fix $H \in \cH_1\cup \cH_2$ and let $F := F_H$ denote the solution to the Stein equation~\eq{eq:pdstnsol} associated with $H$. Set $g(t) := F(W + t(W'-W))$. Then
\ba{
g'(t) &=\int \partial_x F(W + t(W' - W)) (W'-W)(dx),\\
g''(t) &= \int \partial_{xy} F(W + t(W' - W)) (W'-W)^2(dx,dy).
}
If $H\in\cH_2$, then
\be{
g'''(t) =\int \partial_{xyz} F(W + t(W' - W)) (W'-W)^3(dx,dy,dz).
}
\end{lemma}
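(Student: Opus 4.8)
The plan is to obtain all three identities directly from the explicit formulas for the directional derivatives collected in Lemma~\ref{lem:derivsform}, using the single structural fact that $W'-W$ is a signed measure of total mass zero. By Theorem~\ref{thm:FVstnbds}, $F=F_H$ lies in $\cD_1$ when $H\in\cH_1$ and in $\cD_2$ when $H\in\cH_2$, so $F$ has one of the two concrete forms to which Lemma~\ref{lem:derivsform} applies, and the one-sided directional derivatives $\partial_x$, $\partial_{xy}$, $\partial_{xyz}$ of~\eqref{derivatives} are given by the formulas of that lemma, which is all I will use. Write $\mu_t:=W+t(W'-W)$ and $\delta:=W'-W$, so that $\mu_t$ is affine in $t$ with $\frac{d}{dt}\mu_t=\delta$ and, crucially, $\delta(E)=0$; here $\delta^2$ and $\delta^3$ denote the product (not diagonal) measures $(W'-W)^2$ and $(W'-W)^3$ appearing in the statement.

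The first step treats $H\in\cH_1$, say $F(\mu)=f(\angle{\psi_1,\mu},\dots,\angle{\psi_k,\mu})$ with $f\in\rC^2$. Then $g(t)=f\bclr{\angle{\psi_1,W}+t\angle{\psi_1,\delta},\dots,\angle{\psi_k,W}+t\angle{\psi_k,\delta}}$ is $f$ composed with an affine map of $t$, hence $\rC^2$, and the usual chain rule gives $g'(t)=\sum_i\partial_i f(\angle{\bm\psi,\mu_t})\angle{\psi_i,\delta}$ and $g''(t)=\sum_{i,j}\partial_{ij}f(\angle{\bm\psi,\mu_t})\angle{\psi_i,\delta}\angle{\psi_j,\delta}$. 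I would then substitute the formulas~\eqref{eq:d1deriv1} and~\eqref{eq:d1deriv2} into $\int\partial_x F(\mu_t)\,\delta(dx)$ and $\int\partial_{xy}F(\mu_t)\,\delta^2(dx,dy)$: integrating against $\delta$ annihilates every summand carrying a factor $\angle{\psi_i,\mu_t}$ because $\int\delta=0$, and in~\eqref{eq:d1deriv2} the whole single-sum line depends on $y$ only, so it too integrates to zero against $\delta(dx)\delta(dy)$. What is left is exactly $g'(t)$, respectively $g''(t)$.

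The second step treats $H\in\cH_2$, $F(\mu)=\angle{\phi,\mu^k}$. Expanding $\prod_{i=1}^k\mu_t(dx_i)=\prod_{i=1}^k\bclr{W(dx_i)+t\,\delta(dx_i)}$ shows $g(t)=\angle{\phi,\mu_t^k}$ is a polynomial in $t$ of degree at most $k$, so it is smooth, and collecting powers of $t$ yields $g'(t)=\sum_\ell\int\phi\,\delta(dx_\ell)\prod_{i\ne\ell}\mu_t(dx_i)$, $g''(t)=\sum_{\ell\ne m}\int\phi\,\delta(dx_\ell)\delta(dx_m)\prod_{i\ne\ell,m}\mu_t(dx_i)$, and analogously $g'''(t)$ with a sum over ordered triples of distinct indices. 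I would then plug~\eqref{eq:d2deriv1},~\eqref{eq:d2deriv2},~\eqref{eq:d2deriv3} into $\int\partial_x F(\mu_t)\,\delta$, $\int\partial_{xy}F(\mu_t)\,\delta^2$, $\int\partial_{xyz}F(\mu_t)\,\delta^3$ and observe that every term which does not place each of $x$ (resp.\ $x,y$; resp.\ $x,y,z$) into a distinct coordinate slot of $\phi$---the ``contracted'' terms such as $\angle{\psi_x^{(i)},\mu^{k-1}}$, $\angle{\psi_{xy}^{(i,j)},\mu^{k-2}}$, together with the terms carrying the coefficients $-k$, $-(k-1)$ and $k^2$---fails to depend on at least one of $x,y,z$, and hence vanishes after integration against the corresponding copy of $\delta$. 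The surviving ``fully substituted'' terms then reproduce $g'$, $g''$, $g'''$ once ordered index tuples are matched.

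I expect the main, though essentially routine, obstacle to be the combinatorial bookkeeping in the $\cH_2$ case: one has to check that $\sum_{i\ne j}\angle{\psi_{xy}^{(i,j)},\mu^{k-2}}$ in~\eqref{eq:d2deriv2}, and its third-order analogue $\sum_{i,j,\ell\ \mathrm{distinct}}\angle{\psi_{xyz}^{(i,j,\ell)},\mu^{k-3}}$ in~\eqref{eq:d2deriv3}, reproduce $g''$ and $g'''$ with precisely the right multiplicity after integration against $\delta^2$, $\delta^3$, while simultaneously all of the correction terms cancel---this is the place where a careless miscount of the factorial-type coefficients or a sign error could slip in. Everything else is the elementary calculus of affine and polynomial functions of $t$ combined with the vanishing-mass property $\delta(E)=0$.
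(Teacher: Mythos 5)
Your proposal is correct and follows essentially the same route as the paper's proof: compute $g'$, $g''$ (and $g'''$ in the $\cH_2$ case) by elementary calculus on the affine/polynomial dependence on $t$, then match against the explicit derivative formulas of Lemma~\ref{lem:derivsform} using the fact that $\int (W'-W)(dx)=0$ annihilates all the ``contracted'' correction terms. The only cosmetic difference is the direction of the identification (you substitute the derivative formulas into the integrals, while the paper rewrites $g'$, $g''$ to reveal them), which is immaterial.
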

\begin{proof}
Suppose $H \in \cH_1$, then by Theorem~\ref{thm:FVstnbds}, $F\in\cD_1$, so $F(\mu)=f(\angle{\bm\psi,\mu})$ for some $f\in\rC^2(\IR)$ and $\bm\psi=(\psi_1,\ldots,\psi_k)\in\rC(E^k)$, where we write $\angle{\bm\psi,\mu}=(\angle{\psi_1,\mu},\ldots,\angle{\psi_k,\mu})$.  Using
the chain rule, the fact that $\int (W'-W)(dx)=0$, and~\eq{eq:d1deriv1} from Lemma~\ref{lem:derivsform}, we find
\ban{
g'(t) &=\sum_{i=1}^k f_i\bclr{\angle{\bm\psi,W + t(W' - W)}}\angle{\psi_i,  W'-W} \label{eq:gp1} \\
	&= \int \sum_{i=1}^k\bbclr{ f_i\bclr{\angle{\bm\psi,W + t(W' - W)}}\angle{\psi_i, \delta_x}}(W'-W)(dx)\notag\\
	&= \int \sum_{i=1}^k\bbklr{f_i\bclr{\angle{\bm\psi,W + t(W' - W)}}\angle{\psi_i, \delta_x - (W + t(W'-W))}}(W'-W)(dx)\notag\\
	&=\int \partial_x F(W + t(W' - W)) (W'-W)(dx).
}
Similarly, using~\eq{eq:d1deriv2} from Lemma~\ref{lem:derivsform}, and that $\int (W'-W)(dx)=0$, it is straightforward to see
\ba{
\int \partial_{xy} F(W + t(&W' - W)) (W'-W)^2(dx,dy) \\
&= \int \sum_{i,j=1}^k  f_{ij}\bclr{\angle{\bm\psi,W + t(W' - W)}} \psi_i(x)\psi_j(y)(W'-W)^2(dx,dy),
}
which is clearly the same as $g''(t)$ obtained by differentiating~\eq{eq:gp1}.

Similar to above, if $H \in \cH_2$ then Theorem~\ref{thm:FVstnbds} implies that $F\in\cD_2$, so there exists a $\psi \in \rC(\IR^k)$ such that $F(\mu) = \angle{\psi,\mu^k}$. Then, using that $\int (W'-W)(dx)=0$ and the expression~\eq{eq:d2deriv1}, we find
\ban{
&g'(t) \notag \\
&= \sum_{i=0}^{k-1} \angle{\psi, (W + t(W'-W))^i (W'-W)(W + t(W'-W))^{k-i-1}} \label{eq:gp2}\\
	&=\int \sum_{i=1}^{k-1} \bangle{ \psi, (W + t(W' - W))^i \delta_x (W + t(W' - W))^{k-i-1}} (W' - W) (dx)\notag\\
	&=\int \sum_{i=1}^{k-1} \bangle{ \psi, (W + t(W' - W))^i \bclr{\delta_x - (W + t(W'-W)) (W + t(W' - W)}^{k-i-1}} (W' - W) (dx) \notag\\
	&=\int \partial_x F(W + t(W' - W)) (W'-W)(dx). \notag
}
For the second derivative, use~\eq{eq:d2deriv2} and that $\int (W'-W)(dx)=0$ to find
\ba{
\int \partial_{xy} F(W + t(&W' - W)) (W'-W)^2(dx,dy) \\
	&=\sum_{i\not=j}^k \int \angle{\psi_{xy}\s{i,j}, \bclr{W + t(W' - W)}^{k-2}}(W'-W)^2(dx,dy),
}
which is the same as differentiating~\eq{eq:gp2}.

Finally, the expression for the third derivative follows similarly from~\eq{eq:d2deriv3} and 
 that $\int (W'-W)(dx)=0$; in particular, note
 \bes{
\int \partial_{xyz} F(W + t(W' - &W)) (W'-W)^3(dx,dy) \\
	&=\mathop{\sum_{i,j,\ell}}_{\mathrm{distinct}}^k \int \angle{\psi_{xyz}\s{i,j,\ell}, 
		\bclr{W + t(W' - W)}^{k-2}}(W'-W)^3(dx,dy,dz). \qedhere
}
\end{proof}

We are now ready to prove Theorem~\ref{THM3}.
\begin{proof}[Proof of Theorem~\ref{THM3}]
Writing  $F:=F_H$, set  $g(t) := F( W + t(W' - W))$ so that $g(1) = F(W')$ and $g(0) = F(W)$. 

By Lemma~\ref{exlem}, $g$ is twice differentiable, so the usual elementary calculation gives
\ba{
g(1)-g(0)=
\begin{cases}
 g'(0) + \frac{1}{2}g''(0) + \int_0^1 (1-t) (g''(t)-g''(0))dt, & H\in \cH_1, \\
 g'(0) + \frac{1}{2}g''(0) + \int_0^1\int_0^t (1-t) g'''(s) ds dt, & H\in \cH_2,
\end{cases}
}
and thus from Lemma~\ref{exlem},
\besn{\label{part1}
F(W')- F(W) &=\int \partial_x F(W) (W'-W)(dx) \\ 
	&\qquad+\frac{1}{2}\int\partial_{xy} F(W) (W'-W)^2(dx,dy) 
	+ R_3, 
}
where we define
\ba{
R_3=
\begin{cases}
\int_0^1 (1-t) \int \bclr{\partial_{xy} F(W + t(W' - W))- \partial_{xy}F(W)}(W'-W)^2(dx,dy) dt,
	& H\in\cH_1 \\
\int_0^1\int_0^t  (1-t) \int \partial_{xyz} F(W + s(W' - W))(W'-W)^3(dx,dy,dz) ds dt,
	& H\in\cH_2.
	\end{cases}
}
Using basic facts about random measures;
 the linearity condition~\eq{7} and direct calculation (noting $\int R(dx)=0$ by integrating~\eq{7}), we find
\besn{\label{eq:lincondeq}
\IE\bbbcls{\int \partial_x F(W) (W'-W)(dx)\Big| W} 
	&=\int \partial_x F(W) \IE\bcls{(W'-W) | W}(dx)\\
	&=-\lambda \theta \int \partial_x F(W) (W-\pi)(dx) + \int \partial_x F(W) R(dx).
}
Now, taking expectation in~\eq{part1}, noting~\eq{eq:lincondeq} and that $\law(W')=\law(W)$, we find
\bes{
 \frac{\theta}{2} \IE \int \partial_x F(W) (W-\pi)(dx) &=\frac{1}{4\lambda} \IE\bbbclc{\IE\bbbcls{ \int \partial_{xy} F(W) (W'-W)^2(dx,dy) \Big| W } } \\
 &\hspace{10mm} +  \frac{1}{2\lambda}\int \partial_x F(W) R(dx)+\frac{\IE R_3}{2\lambda}.
 }
Plugging into~\eq{eq:dpgen}, noting~\eq{eq:gddrift}, we have
\ba{
\IE \cA F(W) 
	&= \frac{1}{2}\IE \iint \partial_{xy} F(W) \bbclr{ W(dx)\bclr{\delta_x(dy)-W(dy)}-\frac{1}{2\lambda} \IE\bcls{(W'-W)^2|W}(dx,dy)}  \\
	& \hspace{10mm}- \frac{1}{2\lambda} \int \partial_x F(W) R(dx)-\frac{\IE R_3}{2\lambda}.
}
The result easily follows by taking absolute value, using the triangle inequality, and applying the Stein bounds of Theorem~\ref{thm:FVstnbds}; for example,
\ba{
\abs{R_3} \leq \begin{cases} 
\frac{D_3(H;\theta)}{6} \|W' - W \| \|(W'-W)^2\|, & H \in \cH_1,\\
\frac{D_3(H;\theta)}{6} \| (W' - W)^3\|, & H \in \cH_2, 
\end{cases}
}
and $\babs{ \int \partial_x F(W) R(dx)}\leq \norm{\partial_x F}_{\infty} \norm{R}\leq D_1(H;\theta) \norm{R}$.
\end{proof}

\section{Proofs of Wright-Fisher results}\label{sec:wf}

Here we prove Theorems~\ref{THM:WF} and~\ref{thm:KN2mombd}.

\subsection{Proof of Wright-Fisher approximation Theorem~\ref{THM:WF}}
Write the stationary random variable $W=W_N= \frac{1}{N} \sum_{i=1}^{K} N_i \delta_{x_i}$,
where the $x_i$ are the random type labels, $N_i$ is the number of type $x_i$, and $K=K_N$ is the number of types. 
We define $W'=W'_N$ to be one step in the Markov chain from $W$, so $\law(W')=\law(W)$ and Theorem~\ref{THM3} can be applied. To more precisely define~$W'$, we need some intermediate quantities. 
First, given~$W$, let $(M_i)_{i=1}^K\sim\MN\bclr{N, N^{-1}(N_1,\ldots, N_K)}$ represent the number of offspring of each type in one step of the chain from~$W$. Now, given $W$ and $(M_i)_{i=1}^K$, let $B_1,\ldots, B_K$ be conditionally
independent with $B_i\sim\Bi\bclr{M_i, p(x_i)}$ represent the number of mutations in the offspring of each type. Finally, given the variables above, for $1\leq i\leq K$ and $1\leq j \leq B_i$, let $x_{ij}\sim \kappa_{x_i}$ be independent mutation types.
Now we can define
\ban{
W'
	&=\frac{1}{N} \sum_{i=1}^K (M_i-B_i) \delta_{x_i} + \frac{1}{N} \sum_{i=1}^K\sum_{j=1}^{B_i} \delta_{x_{ij}}  \label{eq:wfwo} \\
  	&=\frac{1}{N} \sum_{i=1}^K M_i \delta_{x_i} + \frac{1}{N} \sum_{i=1}^K\sum_{j=1}^{B_i}\clr{\delta_{x_{ij}} -\delta_{x_i}}, \label{eq:wfwp}
}
and it is clear that $W'$ is distributed as one step in the chain from $W$. The next lemma shows we can apply Theorem~\ref{THM3} with~$\lambda=1/(2N)$ and~$R$ read from below.
\begin{lemma}\label{lem:linwf}
For $(W,W')$ defined above and $\lambda=1/(2N)$, 
\bes{
\IE\bcls{W' - W | W} 
		&= -\frac{\theta}{2N} (W-\pi) 
			+ \frac{1}{N} \sum_{i=1}^K N_i \bclr{\tsfrac{\theta}{2N}-p(x_i)} \clr{\delta_{x_i} - \kappa_{x_i}}	
			+\frac{1}{N}\sum_{i=1}^K N_i \tsfrac{\theta}{2N} \bclr{\kappa_{x_i}-\pi}\\
		&=: -\theta \lambda (W-\pi) + R.
	}
\end{lemma}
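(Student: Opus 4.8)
The plan is to compute $\IE[W'-W\mid W]$ directly from the representation~\eq{eq:wfwp} by peeling off the conditional expectations one layer at a time, and then to rearrange the result algebraically to match the claimed decomposition.

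First, from~\eq{eq:wfwp},
\[
W'-W = \frac1N\sum_{i=1}^K (M_i-N_i)\,\delta_{x_i} + \frac1N\sum_{i=1}^K\sum_{j=1}^{B_i}\bclr{\delta_{x_{ij}}-\delta_{x_i}}.
\]
I would condition along the natural nested $\sigma$-algebras generated by $W$, then by $(W,(M_i)_{i=1}^K)$, then by $(W,(M_i),(B_i))$, and apply the tower property. Working from the inside out: given $(W,(M_i),(B_i))$ the mutation types $x_{ij}$ are i.i.d.\ $\kappa_{x_i}$-distributed, so $\IE\bcls{\sum_{j=1}^{B_i}(\delta_{x_{ij}}-\delta_{x_i})\mid W,(M_i),(B_i)}=B_i(\kappa_{x_i}-\delta_{x_i})$; next, $B_i\sim\Bi(M_i,p(x_i))$ gives $\IE[B_i\mid W,(M_i)]=M_i\,p(x_i)$; and finally $(M_i)\sim\MN(N,N^{-1}(N_1,\dots,N_K))$ gives $\IE[M_i\mid W]=N_i$, which also annihilates the first sum. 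Since $K$ is a.s.\ finite and each summand is a bounded signed measure, expectation commutes with the finite sums, and we obtain
\[
\IE[W'-W\mid W] = \frac1N\sum_{i=1}^K N_i\,p(x_i)\,\bclr{\kappa_{x_i}-\delta_{x_i}}.
\]

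Second, I would do the bookkeeping. Write $p(x_i)=\tsfrac{\theta}{2N}-\bclr{\tsfrac{\theta}{2N}-p(x_i)}$ and $\kappa_{x_i}-\delta_{x_i}=(\kappa_{x_i}-\pi)+(\pi-\delta_{x_i})$, then expand the sum into three groups of terms. The group $\tfrac1N\sum_{i=1}^K N_i\tsfrac{\theta}{2N}(\pi-\delta_{x_i})$ collapses to $\tsfrac{\theta}{2N}(\pi-W)=-\theta\lambda(W-\pi)$ because $\sum_i N_i=N$ and $W=\tfrac1N\sum_i N_i\delta_{x_i}$; the group carrying the factor $\tsfrac{\theta}{2N}-p(x_i)$ is $\tfrac1N\sum_i N_i\bclr{\tsfrac{\theta}{2N}-p(x_i)}(\delta_{x_i}-\kappa_{x_i})$; and the remaining group is $\tfrac1N\sum_i N_i\tsfrac{\theta}{2N}(\kappa_{x_i}-\pi)$. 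These are exactly the three summands in the statement, so taking $R$ to be the last two sums completes the proof.

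There is essentially no hard step here. The only points requiring care are (i) applying the tower property with the correct nested conditioning, so that the binomial and multinomial means are taken in the right order; and (ii) justifying the interchange of expectation with the sums over the random index set $\{1,\dots,K\}$ — conditioning on $K$ first, or using $\norm{\delta_{x_{ij}}-\delta_{x_i}}\le 2$ together with $\sum_i B_i\le N$, settles this at once. Everything else is algebra.
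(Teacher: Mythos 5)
Your proof is correct and follows essentially the same route as the paper's: compute $\IE[W'\mid W]$ from~\eq{eq:wfwp} by conditioning through the multinomial and binomial layers to get $\IE[W'-W\mid W]=\frac1N\sum_{i=1}^K N_i p(x_i)(\kappa_{x_i}-\delta_{x_i})$, then rearrange using $\sum_i N_i=N$. The paper's version is just terser, omitting the explicit tower-property and interchange justifications you spell out.
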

\begin{proof}
Starting from~\eq{eq:wfwp}, intermediately conditioning on $(M_i)_{i=1}^K$, and  using formulas for the mean of  binomial distributions, we easily find
\bes{
\IE\bcls{W'  | W} &= \frac{1}{N} \sum_{i=1}^K N_i \delta_{x_i} + \frac{1}{N} \sum_{i=1}^K N_i p(x_i)\clr{\kappa_{x_i} -\delta_{x_i}}, \\
}
and rearranging gives the lemma.
\end{proof}

We now write some lemmas used to bound~$A_1,A_2,A_3$ from Theorem~\ref{THM3}.
\begin{lemma}\label{lem:A1wf} 
With the definitions above, 
\bes{
\frac{1}{\lambda} \IE \norm{R} \leq   4N \sup_{x\in  E} \babs{ p(x) - \tsfrac{\theta}{2N}} + \theta \sup_{x\in E} \norm{\kappa_x - \pi}. 
}			
\end{lemma}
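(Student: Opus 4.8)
The plan is to bound $\norm{R}$ deterministically --- that is, for every realization of $W$ --- so that taking the expectation is immediate. Starting from the expression for $R$ in Lemma~\ref{lem:linwf}, I would apply the triangle inequality for the variation norm to each of the two sums, obtaining
\[
\norm{R}\leq \frac1N\sum_{i=1}^K N_i\babs{\tsfrac{\theta}{2N}-p(x_i)}\,\norm{\delta_{x_i}-\kappa_{x_i}}+\frac1N\sum_{i=1}^K N_i\,\tsfrac{\theta}{2N}\,\norm{\kappa_{x_i}-\pi}.
\]

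Next, in the first sum I would use the crude bound $\norm{\delta_{x_i}-\kappa_{x_i}}\leq\norm{\delta_{x_i}}+\norm{\kappa_{x_i}}=2$, which holds because the variation norm of any probability measure equals $1$, and then replace $\babs{\tsfrac{\theta}{2N}-p(x_i)}$ by $\sup_{x\in E}\babs{p(x)-\tsfrac{\theta}{2N}}$; in the second sum I would replace $\norm{\kappa_{x_i}-\pi}$ by $\sup_{x\in E}\norm{\kappa_x-\pi}$. Since $\sum_{i=1}^K N_i=N$, both sums collapse and I get
\[
\norm{R}\leq 2\sup_{x\in E}\babs{p(x)-\tsfrac{\theta}{2N}}+\frac{\theta}{2N}\sup_{x\in E}\norm{\kappa_x-\pi}.
\]

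Finally, multiplying through by $1/\lambda=2N$ and taking expectations (the right-hand side being deterministic) yields the claimed inequality. There is no genuine obstacle here; the only point that needs a moment's care is to resist trying to extract further cancellation from $\norm{\delta_{x_i}-\kappa_{x_i}}$ --- all the smallness is carried by the factor $\babs{p(x_i)-\tsfrac{\theta}{2N}}$, and it is precisely the constant $2$ there, after multiplication by $2N$, that produces the coefficient $4N$ in the statement.
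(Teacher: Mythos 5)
Your proof is correct and follows exactly the paper's argument: apply the triangle inequality to the two sums in the expression for $R$ from Lemma~\ref{lem:linwf}, bound $\norm{\delta_{x_i}-\kappa_{x_i}}\leq 2$, use $\sum_{i=1}^K N_i = N$, and multiply by $1/\lambda = 2N$. No differences worth noting.
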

\begin{proof}
Use the triangle inequality to find
\ba{
\frac{1}{\lambda} \IE \norm{R}
	&= 2N \, \IE \bbbnorm{ \frac{1}{N} \sum_{i=1}^K N_i \bclr{\tsfrac{\theta}{2N}-p(x_i)} \clr{\delta_{x_i} - \kappa_{x_i}}	
		+\frac{1}{N}\sum_{i=1}^K N_i \tsfrac{\theta}{2N} \bclr{\kappa_{x_i}-\pi}} \\
	&\leq 2N \, \IE \bbbcls{ \frac{1}{N} \sum_{i=1}^K N_i \babs{\tsfrac{\theta}{2N}-p(x_i)} \norm{\delta_{x_i} - \kappa_{x_i}}	
		+\frac{1}{N}\sum_{i=1}^K N_i \tsfrac{\theta}{2N} \norm{\kappa_{x_i}-\pi}},
}
and the result now easily follows.
\end{proof}

The next lemma is used to bound~$A_2$ from Theorem~\ref{THM3}.
\begin{lemma}\label{lem:A2wf}
With the definitions above,
\ba{
 \IE \bnorm{W^{*2}-W^2-\tsfrac{1}{2\lambda}\IE[(W'-W)^2|W]} 
  	\leq 4 \norm{p}_\infty \bclr{N \norm{p}_\infty + 3  } 
}
\end{lemma}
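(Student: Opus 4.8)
The plan is to compute $\IE[(W'-W)^2\mid W]$ directly from the representation~\eq{eq:wfwp}, peeling off the leading term explicitly. Since $\lambda = 1/(2N)$ we have $\tsfrac1{2\lambda} = N$; write $D := N(W'-W) = D_1+D_2$ with
\be{
D_1 := \sum_{i=1}^K(M_i-N_i)\delta_{x_i}, \qquad D_2 := \sum_{i=1}^K\sum_{j=1}^{B_i}(\delta_{x_{ij}}-\delta_{x_i}),
}
so that $\tsfrac1{2\lambda}\IE[(W'-W)^2\mid W] = \tsfrac1N\IE[D_1^2 + D_1D_2 + D_2D_1 + D_2^2\mid W]$, all products denoting product measures on $E^2$. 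First I would check that the $D_1^2$ term reproduces the leading term exactly: since $(M_i)_{i=1}^K\mid W\sim\MN(N,N^{-1}(N_1,\ldots,N_K))$ gives $\IE[(M_i-N_i)(M_j-N_j)\mid W] = N_i\Ind[i=j]-N_iN_j/N$, a short computation yields $\IE[D_1^2\mid W] = N(W^{*2}-W^2)$. Hence
\be{
W^{*2}-W^2-\tsfrac1{2\lambda}\IE[(W'-W)^2\mid W] = -\tsfrac1N\,\IE[D_1D_2+D_2D_1+D_2^2\mid W],
}
and by Jensen's inequality for the variation norm it remains to bound $\tsfrac1N\IE\bcls{\bnorm{\IE[D_1D_2\mid W]} + \bnorm{\IE[D_2D_1\mid W]} + \bnorm{\IE[D_2^2\mid W]}}$.

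The potential pitfall here — rather than a genuine difficulty — is handling the mixed terms $D_1D_2$ and $D_2D_1$. Bounding them through $\norm{D_1}\norm{D_2}$ is too lossy, because $\IE\norm{D_1} = \IE\sum_i|M_i-N_i|$ is of order $\sqrt{K_N N}$ and would pollute the bound with the number of types; instead one should compute the conditional expectation exactly and exploit its cancellation. Conditioning first on $(M_i)_{i=1}^K$ and using $\IE\bcls{\sum_{j=1}^{B_i}(\delta_{x_{ij}}-\delta_{x_i})\mid(M_i),W} = M_ip(x_i)(\kappa_{x_i}-\delta_{x_i})$, then averaging over $(M_i)_{i=1}^K$ with the multinomial covariances above, gives
\be{
\IE[D_1D_2\mid W] = \sum_{i=1}^K N_ip(x_i)\,\delta_{x_i}(\kappa_{x_i}-\delta_{x_i}) - \tsfrac1N\sum_{i,k=1}^K N_iN_kp(x_k)\,\delta_{x_i}(\kappa_{x_k}-\delta_{x_k}),
}
and since $\norm{\delta_{x_i}(\kappa_{x_k}-\delta_{x_k})}\le 2$, the triangle inequality gives $\bnorm{\IE[D_1D_2\mid W]}\le 4\sum_i N_ip(x_i)\le 4N\norm{p}_\infty$; the symmetric computation gives the same bound for $\bnorm{\IE[D_2D_1\mid W]}$.

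For the remaining term I would use the crude but here sufficient estimate $\bnorm{\IE[D_2^2\mid W]}\le\IE[\norm{D_2}^2\mid W]\le 4\,\IE[B^2\mid W]$, where $B:=\sum_{i=1}^K B_i$ is the total number of mutant offspring; conditioning on $(M_i)_{i=1}^K$, $B$ is a sum of independent $\Bi(M_i,p(x_i))$ with $\sum_i M_i = N$, whence $\IE[B^2\mid W]\le N\norm{p}_\infty + (N\norm{p}_\infty)^2$. All three bounds hold for every realisation of $W$ (they reduce to $\sum_i N_ip(x_i)\le N\norm{p}_\infty$), so the outer expectation over $W$ costs nothing, and summing gives
\be{
\IE\bnorm{W^{*2}-W^2-\tsfrac1{2\lambda}\IE[(W'-W)^2\mid W]} \le \tsfrac1N\bclr{4N\norm{p}_\infty + 4N\norm{p}_\infty + 4N\norm{p}_\infty + 4(N\norm{p}_\infty)^2} = 4\norm{p}_\infty(N\norm{p}_\infty+3),
}
which is the claim. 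The only care required is the bookkeeping of multinomial and binomial moments and the multiplicativity $\norm{\mu\nu}=\norm{\mu}\norm{\nu}$ of the variation norm under products of measures.
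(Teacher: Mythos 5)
Your proposal is correct and follows essentially the same route as the paper: the same decomposition of $N(W'-W)$ from~\eq{eq:wfwp}, exact cancellation of the multinomial term against $W^{*2}-W^2$, exact computation of the mixed terms via the multinomial covariances, and a second-moment bound on the total number of mutants, arriving at the identical constant. The only (immaterial) differences are that you bound $\IE[B^2\mid W]$ by conditioning on the offspring counts rather than by stochastic domination by $\Bi(N,\norm{p}_\infty)$, and you pass to $\norm{D_2}^2$ instead of first computing $\IE[D_2^2\mid W]$ exactly.
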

\begin{proof}
First note $\lambda=1/(2N)$, and from~\eq{eq:wfwp},
\ban{
\frac{(W'-W)^2 }{2\lambda} &= \frac{1}{N} \sum_{i,j=1}^K (M_i-N_i) (M_j-N_j) \delta_{x_i}  \delta_{x_j} 
			+  \frac{1}{N} \sum_{i,j=1}^K\sum_{k=1}^{B_i}\sum_{l=1}^{B_j}\clr{\delta_{x_{ik}} -\delta_{x_i}}\clr{\delta_{x_{jl}} -\delta_{x_j}}  \label{eq:tee1}\\
		&	 \qquad \quad		+ \frac{1}{N}\sum_{i,j=1}^K \sum_{k=1}^{B_i} (M_j-N_j) \bclr{  \delta_{x_j}\clr{\delta_{x_{ik}} -\delta_{x_i}}+\clr{\delta_{x_{ik}} -\delta_{x_i}}\delta_{x_j}}. \label{eq:tee2}
}
Using the formula for the covariance of a multinomial distribution, the conditional expectation of the first term of~\eq{eq:tee1}
becomes
\ba{
\frac{1}{N} \sum_{i,j=1}^K \IE\bcls{(M_i-N_i) (M_j-N_j)| W} \delta_{x_i}  \delta_{x_j}
	&=\frac{1}{N} \sum_{i,j=1}^K N_i \bclr{\Ind[i=j]- \tsfrac{N_j}{N}}  \delta_{x_i}  \delta_{x_j} \\
	&=\frac{1}{N} \sum_{i=1}^K N_i \delta_{x_i}^2 - \frac{1}{N^2} \sum_{i,j=1}^K N_i N_j\delta_{x_i}\delta_{x_j} \\
	&= W^{*2}  -W^2.
}
For the conditional expectation of the second term of~\eq{eq:tee1}, basic definitions imply
\ba{
 \frac{1}{N} \sum_{i,j=1}^K \IE \bbcls{\sum_{k=1}^{B_i}\sum_{l=1}^{B_j}\clr{\delta_{x_{ik}} -\delta_{x_i}}\clr{\delta_{x_{jl}} -\delta_{x_j}} \big|W} 
 	&=\frac{1}{N} \sum_{i,j=1}^K \IE \bcls{B_iB_j|W} \clr{\kappa_{x_i} -\delta_{x_i}}\clr{\kappa_{x_{j}} -\delta_{x_j}}.
}
Finally, for the conditional expectation of~\eq{eq:tee2}, we again use the multinomial covariance formula, yielding
\ba{
\frac{1}{N}\sum_{i,j=1}^K \IE\bbbcls{ \sum_{k=1}^{B_i} (M_j-N_j)& \bclr{  \delta_{x_j}\clr{\delta_{x_{ik}} -\delta_{x_i}}+\clr{\delta_{x_{ik}} -\delta_{x_i}}\delta_{x_j}}\big| W} \\
	&=\frac{1}{N}\sum_{i,j=1}^K p(x_i)  N_i\bclr{\Ind[i=j]- \tsfrac{N_j}{N}} \bclr{  \delta_{x_j}\clr{\kappa_{x_{i}} -\delta_{x_i}}+\clr{\kappa_{x_{i}}  -\delta_{x_i}}\delta_{x_j}}.
}
Combining the previous four displays and using the triangle inequality, we have
\ba{
\bnorm{W^{*2}-W^2-\tsfrac{1}{2\lambda}\IE[(W'-W)^2|W]} 
	&\leq \frac{4}{N} \sum_{i,j=1}^K \IE \bcls{B_iB_j|W} + \frac{4 \norm{p}_{\infty}}{N}\sum_{i,j=1}^K  N_i\bclr{\Ind[i=j]+\tsfrac{N_j}{N}} \\
	&=\frac{4}{N}\bbbclr{\IE \bbcls{\bbclr{\sum_{i=1}^K B_i}^2 \big| W}+ 2 N \norm{p}_{\infty} }.
}
Since for $i=1,\ldots,K$, $\law(B_i)$ is stochastically dominated by $\Bi(M_i, \norm{p}_\infty)$, we have that
$\law\bclr{\sum_{i=1}^K B_i|W}$ is stochastically dominated by $\Bi(N, \norm{p}_\infty)$,
and the result follows from the second moment formula for the binomial distribution and easy simplification.
\end{proof}

We next bound $A_3$ from Theorem~\ref{THM3}.

\begin{lemma}\label{lem:A3wf}
With the definitions above,
\ba{
\max\bbbclc{\IE\bbcls{\bnorm{(W'-W)^3}},\IE\bbcls{ \norm{W'-W}  \bnorm{(W'-W)^2}}}  \leq \frac{\IE\bcls{K^{3/2}}}{N^{3/2}}\bbclr{\sqrt{2}+2 C_{N,p}^{1/3}}^{3}, 
}
where $C_{N,p}:=12 \bcls{\clr{N\norm{p}_\infty}^3 + N \norm{p}_{\infty}}$.
\end{lemma}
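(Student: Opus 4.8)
The plan is to reduce everything to a moment bound on $\sum_i(M_i-N_i)^2$. First note that both quantities in the maximum are the same: since $(W'-W)^k$ denotes the $k$-fold product measure on $E^k$, one has $\bnorm{(W'-W)^k}=\bnorm{W'-W}^k$, so it suffices to bound $\IE\bnorm{W'-W}^3$. From the representation~\eqref{eq:wfwp} and the triangle inequality (using $\bnorm{\delta_x}=1$ and $\bnorm{\delta_{x_{ij}}-\delta_{x_i}}\le2$),
\be{
\bnorm{W'-W}\le\frac1N S_1+\frac2N S_2,\qquad S_1:=\sum_{i=1}^K\babs{M_i-N_i},\quad S_2:=\sum_{i=1}^K B_i.
}
Conditioning on $W$ (which determines $K$) and applying Minkowski's inequality in $L^3$,
\be{
\bclr{\IE[\bnorm{W'-W}^3\mid W]}^{1/3}\le\frac1N\bclr{\IE[S_1^3\mid W]}^{1/3}+\frac2N\bclr{\IE[S_2^3\mid W]}^{1/3}.
}
I will show the first summand on the right is at most $\sqrt2\,K^{1/2}N^{-1/2}$ and the second at most $2C_{N,p}^{1/3}K^{1/2}N^{-1/2}$; cubing and taking expectations over $W$ then gives the claim.

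The mutation term is immediate: exactly as in the proof of Lemma~\ref{lem:A2wf}, $\law(S_2\mid W)$ is stochastically dominated by $\Bi(N,\norm{p}_\infty)$, whose third moment is $N(N-1)(N-2)\norm{p}_\infty^3+3N(N-1)\norm{p}_\infty^2+N\norm{p}_\infty\le(N\norm{p}_\infty)^3+3(N\norm{p}_\infty)^2+N\norm{p}_\infty\le\tfrac52\bclr{(N\norm{p}_\infty)^3+N\norm{p}_\infty}$, the last step by $2x^2\le x^3+x$. Hence $\IE[S_2^3\mid W]\le C_{N,p}$, and using $K\ge1$ and $N^{-1}\le N^{-1/2}$,
\be{
\frac2N\bclr{\IE[S_2^3\mid W]}^{1/3}\le\frac2{N^{1/2}}C_{N,p}^{1/3}\le\frac{K^{1/2}}{N^{1/2}}\,2C_{N,p}^{1/3}.
}
For the sampling term set $T:=\sum_{i=1}^K(M_i-N_i)^2$. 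By Cauchy--Schwarz $S_1\le(KT)^{1/2}$, so $\IE[S_1^3\mid W]\le K^{3/2}\IE[T^{3/2}\mid W]$, and a second Cauchy--Schwarz gives $\IE[T^{3/2}\mid W]\le(\IE[T\mid W])^{1/2}(\IE[T^2\mid W])^{1/2}$. Since $(M_1,\dots,M_K)\sim\MN(N,N^{-1}(N_1,\dots,N_K))$, $\IE[T\mid W]=\sum_i N_i(1-N_i/N)\le N$, so everything comes down to bounding $\IE[T^2\mid W]$.

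This fourth-moment estimate is the main work. Writing $M_i=\sum_{k=1}^N\Ind[c_k=i]$ with $(c_k)$ i.i.d.\ uniform over parents, expand $\IE[T^2\mid W]=\sum_i\IE[(M_i-N_i)^4]+\sum_{i\ne j}\IE[(M_i-N_i)^2(M_j-N_j)^2]$. For the diagonal, the fourth central moment of $\Bi(N,p_i)$ (with $p_i=N_i/N$) is at most $\sigma_i^2+3\sigma_i^4$, $\sigma_i^2:=N_i(1-N_i/N)$. For $i\ne j$, expanding over trials gives the exact identity
\be{
\IE[(M_i-N_i)^2(M_j-N_j)^2]=N\,\IE[\tilde X^2\tilde Y^2]+N(N-1)\IE[\tilde X^2]\IE[\tilde Y^2]+2N(N-1)\bclr{\IE[\tilde X\tilde Y]}^2,
}
where $\tilde X=\Ind[c_1=i]-p_i$, $\tilde Y=\Ind[c_1=j]-p_j$, $\IE[\tilde X^2]=p_i(1-p_i)$, $\IE[\tilde X\tilde Y]=-p_ip_j$, and (retaining the negative term) $\IE[\tilde X^2\tilde Y^2]=p_ip_j(p_i+p_j)-3p_i^2p_j^2$. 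Summing over $i\ne j$ using $\sum_iN_i=N$, $\sum_iN_i(N-N_i)=N^2-\sum_iN_i^2$, $\sum_{i\ne j}N_i^2N_j^2\le(\sum_iN_i^2)^2$, and $N_i(N-N_i)\le N^2/4$, and writing $u:=\IE[T\mid W]=N-N^{-1}\sum_iN_i^2\in[0,N]$, everything collapses to
\be{
\IE[T^2\mid W]\le2N^2+2N-\bclr{\tfrac{13}{4}N+1}u+3u^2,
}
which, being convex in $u$ on $[0,N]$, is maximized at $u=0$; thus $\IE[T^2\mid W]\le2N^2+2N\le4N^2$ for $N\ge1$. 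Consequently $\IE[T^{3/2}\mid W]\le(N\cdot4N^2)^{1/2}=2N^{3/2}$, so $\IE[S_1^3\mid W]\le2K^{3/2}N^{3/2}$ and
\be{
\frac1N\bclr{\IE[S_1^3\mid W]}^{1/3}\le\frac{K^{1/2}}{N}(2N^{3/2})^{1/3}=2^{1/3}\frac{K^{1/2}}{N^{1/2}}\le\sqrt2\,\frac{K^{1/2}}{N^{1/2}},
}
as required. The main obstacle is this fourth-moment computation: it is elementary, but the individual terms must be evaluated exactly — crude term-by-term bounds (e.g.\ Cauchy--Schwarz on the off-diagonal terms, or discarding the sign structure of $\IE[\tilde X^2\tilde Y^2]$) are lossy enough to spoil the constant $\sqrt2$ — and the slack only appears after collecting the aggregated bound as a function of $u$.
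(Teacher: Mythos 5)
Your proof is correct and reaches exactly the lemma's constants, but by a genuinely different route from the paper's. Your opening reduction --- that the variation norm is multiplicative over products of signed measures, so $\bnorm{(W'-W)^k}=\norm{W'-W}^k$ and both quantities in the maximum coincide with $\IE\norm{W'-W}^3$ --- is a nice simplification the paper does not make explicit (it bounds both quantities by one common expression instead). After that the two arguments diverge: you split off the mutation term by Minkowski's inequality in $L^3$ and reduce the multinomial part to the joint fourth moment $\IE[T^2\mid W]\le 4N^2$ with $T=\sum_i(M_i-N_i)^2$, which requires the exact cross-moment structure of the multinomial vector; the paper instead expands $(W'-W)^3$ into a triple sum over type indices and applies the generalized H\"older inequality to each summand, so that only the \emph{marginal} moments $\IE[\abs{M_i-N_i}^3\mid W]\le(2N_i)^{3/2}$ and $\IE[B_i^3\mid W]\le C_{N,p}N_i/N$ are ever needed, the factor $K^{3/2}$ then arising from $\sum_i N_i^{1/2}\le (KN)^{1/2}$ --- the same Cauchy--Schwarz you use in $S_1\le(KT)^{1/2}$. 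The paper's route is shorter and sidesteps the multinomial covariance algebra entirely; yours is more self-contained and makes the origin of the $\sqrt2$ transparent ($2^{1/3}$ from $\IE[T^2\mid W]\le4N^2$, relaxed to $2^{1/2}$). I verified your key claim independently: with $u=\IE[T\mid W]\le N$ and the bound $\sum_{i\ne j}p_i^2p_j^2\le\tfrac14\sum_{i\ne j}p_ip_j$ one gets $\IE[T^2\mid W]\le u\bclr{\tfrac94 N+3}\le 4N^2$ for $N\ge2$ (the case $N=1$ being trivial), so your conclusion stands independently of the exact coefficients of your intermediate quadratic in $u$. One small quibble: your closing remark that Cauchy--Schwarz on the off-diagonal terms would spoil the constant is overly pessimistic --- it gives $\IE[T^2\mid W]\le(1+\sqrt3)^2N^2\approx 7.5N^2$, and $7.5^{1/6}\approx1.40<\sqrt2$, so even that cruder bound would just squeak through.
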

\begin{proof}
First, from~\eq{eq:wfwo}, we have
\be{
W'-W= \frac{1}{N} \sum_{i=1}^K (M_i-N_i) \delta_{x_i} + \frac{1}{N} \sum_{i=1}^K\sum_{j=1}^{B_i}\bclr{ \delta_{x_{ij}} -\delta_{x_i}}=: X+Y.
}
Thus, for $k=1,2,3$, we have
\ben{\label{eq:wwpkn}
\bnorm{(W'-W)^k}=\sum_{j=0}^k \binom{k}{j} \norm{X^j Y^{k-j}},
}
and we can easily bound
\ben{\label{eq:wwkpknexp}
\norm{X^j Y^{k-j}}\leq  \frac{1}{N^k} \sum_{i_1,\ldots,i_k=1}^{K} \prod_{\ell=1}^j \abs{M_{i_\ell}-N_{i_\ell}}
\prod_{m=j+1}^k (2 B_{i_m}).
}
Applying~\eq{eq:wwpkn} and~\eq{eq:wwkpknexp} and taking expectation, we have 
\besn{\label{eq:wfmom3bd}
\max\bbbclc{\IE\bbcls{\bnorm{(W'-&W)^3}},\IE\bbcls{ \norm{W'-W}  \bnorm{(W'-W)^2}}} \\
	&\leq  \frac{1}{N^3} \sum_{j=0}^3  \binom{3}{j} 2^{3-j} 
			 \IE\bbbclc{ \sum_{i_1,i_2,i_3=1}^{K} \IE\bbcls{\prod_{\ell=1}^j \abs{M_{i_\ell}-N_{i_\ell}}
			\prod_{m=j+1}^3  B_{i_m} \, \big| \, W}}.
}
Applying H\"older's inequality, we bound the conditional expectations of the summands by
\be{
 \IE\bbcls{\prod_{\ell=1}^j \abs{M_{i_\ell}-N_{i_\ell}} \prod_{m=j+1}^3  B_{i_m} \, \big| \, W}
 	 \leq \bbbclr{  \prod_{\ell=1}^j \IE\bcls{ \abs{M_{i_\ell}-N_{i_\ell}}^3 | W} \prod_{m=j+1}^3 \IE\bcls{ B_{i_m}^{3} |  W} }^{1/3}.
}
To bound these expectations, we use Jensen's inequality and standard calculations for binomial moments from Lemma~\ref{lem:binmoms} below, to find that, 
\be{
\IE\bcls{  \abs{M_{i_\ell}-N_{i_\ell}}^3 | W}
	 \leq\bbclr{ \IE\bcls{  \clr{M_{i_\ell}-N_{i_\ell}}^4 | W} }^{3/4} 
	\leq \bclr{3 N_i^2 + N_i}^{3/4} \leq \bclr{2 N_i}^{3/2}.
}
Similarly, we have,
\ba{
\IE\bcls{ B_{i}^{3} |  W} 
	&\leq \IE\bcls{ M_i^3 p(x_{i})^3 +3 M_i^2 p(x_{i})^2 +M_i p(x_{i})   |  W}  \\
	&\leq 5 N_i^3 p(x_{i})^3 +6 N_i^2 p(x_{i})^2 + N_i p(x_{i})  \\
	&\leq 5 N_i^3 \norm{p}_\infty^3 +6 N_i^2 \norm{p}_\infty^2 +N_i \norm{p}_\infty  \\
	&\leq 12 \bcls{\clr{N\norm{p}_\infty}^3 + N \norm{p}_{\infty}}\frac{N_i}{N}\\
	& =: C_{N,p} \frac{N_i}{N}.
}
Combining the last four displays implies~\eq{eq:wfmom3bd} is bounded above by
\be{
 \sum_{j=0}^3  \frac{1}{N^{3-\frac{j}{2}}}  \binom{3}{j} 2^{3-\frac{j}{2}} \bclr{C_{N,p}}^{\frac{3-j}{3}}\IE \bbbcls{\bbclr{\sum_{i=1}^K \bbclr{\frac{N_i}{N}}^{\frac{1}{2}}}^j 
		\bbclr{ \sum_{i=1}^K\bbclr{\frac{N_i}{N}}^{\frac{1}{3}}}^{3-j}}.
}
We further bound this quantity using that for $q\geq1$,
\be{
\sum_{i=1}^{K} \bbclr{\frac{N_i}{N}}^{1/q} \leq K^{(q-1)/q},
}
which follows from H\"older's inequality.
Therefore, using the fact that $K \leq N$, we can upper bound~\eq{eq:wfmom3bd} by
\ba{
\sum_{j=0}^3 \frac{1}{N^{3-\frac{j}{2}}}  \binom{3}{j} 2^{3-\frac{j}{2}} \bclr{ C_{N,p}}^{\frac{3-j}{3}} \IE\bcls{K^{2-\frac{j}{6}} }
		& \leq \frac{\IE\bcls{K^{3/2}}}{N^{3/2}}  \sum_{j=0}^3 \binom{3}{j} 2^{3-\frac{j}{2}} \bclr{ C_{N,p}}^{\frac{3-j}{3}} \\
		&=\frac{\IE\bcls{K^{3/2}}}{N^{3/2}}\bbclr{\sqrt{2}+2 C_{N,p}^{1/3}}^{3}, 
}
as desired.
\end{proof}

The proof of Theorem~\ref{THM:WF} now easily follows by plugging Lemmas~\ref{lem:linwf},~\ref{lem:A1wf},~\ref{lem:A2wf}, and~\ref{lem:A3wf} into Theorem~\ref{THM3}.

\begin{lemma}[Binomial Moments]\label{lem:binmoms}
If $X\sim\Bi(n,p)$, then
\ba{
\IE\bcls{(X-np)^4}
	&=np(1-p)\bclr{3(n-2)p(1-p)+1} \\
	&\leq 3 n^2 p^2 + np,\\
\IE \cls{X^3} 
	&=n^3 p^3 + 3n^2 p^2 (1 -  p) + n p (1 - 3 p + 2 p^2) \\
	&\leq n^3 p^3 + 3n^2 p^2 + n p,\\
\IE \cls{X^2}&\leq n^2 p^2 + np.
}
\end{lemma}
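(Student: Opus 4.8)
The plan is to reduce every quantity to the falling factorial moments of the binomial, which have the clean closed form
\be{
\IE\bcls{X(X-1)\cdots(X-j+1)}=n(n-1)\cdots(n-j+1)\,p^j,\qquad j\ge 1.
}
This identity I would justify in one line by writing $X=\sum_{i=1}^n I_i$ with the $I_i$ i.i.d.\ $\Be(p)$ and observing that, upon expanding the product $\prod(X-m)$ and collecting terms, only the $n(n-1)\cdots(n-j+1)$ ordered $j$-tuples of \emph{distinct} indices survive, each contributing expectation $p^j$.

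For the second and third moments I would then use the Stirling expansions $x^2=x(x-1)+x$ and $x^3=x(x-1)(x-2)+3x(x-1)+x$ to obtain $\IE[X^2]=n(n-1)p^2+np$ and $\IE[X^3]=n(n-1)(n-2)p^3+3n(n-1)p^2+np$. Multiplying out the falling factorials and regrouping the powers of $p$ recovers the stated exact formula for $\IE[X^3]$, and the displayed upper bounds are immediate from $n(n-1)\cdots(n-j+1)\le n^j$ together with discarding the (nonpositive) leftover contributions. The bound $\IE[X^2]\le n^2p^2+np$ is the same argument truncated at $j\le 2$.

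For the fourth \emph{central} moment I would avoid a direct four-term expansion of $(X-np)^4$ and instead invoke the standard fact that the fourth central moment of a sum of $n$ i.i.d.\ summands equals $n\mu_4+3n(n-1)\mu_2^2$, where $\mu_2$ and $\mu_4$ are the second and fourth central moments of a single $\Be(p)$ variable (the $3n(n-1)$ counting the ordered index patterns of the form $iijj$ with $i\ne j$). A short computation gives $\mu_2=p(1-p)$ and $\mu_4=p(1-p)\bclr{(1-p)^3+p^3}=p(1-p)(1-3p+3p^2)$; substituting and collecting terms yields exactly $\IE[(X-np)^4]=np(1-p)\bclr{3(n-2)p(1-p)+1}$. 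The final bound then follows from $1-p\le 1$, $p(1-p)\le p$ and $n-2\le n$, giving $\IE[(X-np)^4]\le np\bclr{1+3np}=3n^2p^2+np$.

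I do not expect any genuine obstacle: this is an elementary computation, and the only place requiring a little care is the simplification of the fourth central moment, which the i.i.d.-decomposition route renders routine.
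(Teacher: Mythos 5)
Your proof is correct: the falling-factorial reduction gives the exact second and third moments, the i.i.d.\ decomposition $\IE[(\sum_i(I_i-p))^4]=n\mu_4+3n(n-1)\mu_2^2$ with $\mu_2=p(1-p)$ and $\mu_4=p(1-p)(1-3p+3p^2)$ does reduce to $np(1-p)\bclr{3(n-2)p(1-p)+1}$, and all the discarded terms in the inequalities are indeed nonpositive (noting $3(n-2)p(1-p)+1>0$ even for $n=1$, so the crude bounds $1-p\le1$, $n-2\le n$ apply factorwise). The paper states this lemma without proof, treating it as a standard calculation, so there is nothing to compare against; your argument is a perfectly good way to supply it.
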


\subsection{Proof of number of types moment bound Theorem~\ref{thm:KN2mombd}}

The primary idea of proof of Theorem~\ref{thm:KN2mombd} is to look at the total number of ancestors  in the entire genealogy going backward in time until there are~$2$ ancestors; call this quantity~$E$, for now. Since every type in the current population had to come from one of these ancestors, the total number of types is upper bounded by $2+X(E,q)$, where $X(E,q)\sim\Bi(E,q)$ is the total number of mutations in $E$. Thus, 
\be{
\IE[K_N^2]\leq 4 +5\IE[E] q+ \IE[E^2] q^2,
}
and so it is enough to bound $\IE[E^2] \leq (6\times 10^6) \bclr{N \log(N)}^2$. In fact, we are able to
show a bit more than this, as we now explain. The sequence of numbers of individuals in the Wright-Fisher genealogy  going backward in time
can be represented as a Markov chain $(X_n)_{n\geq0}$ with transitions probabilities
\be{
\IP(X_{n+1}=i | X_n=j) = N^{-j} S(j,i) \binom{N}{i} i!,
}
where $S(j,i)$ denotes the number of ways to partition $j$ labelled items into $i$ non-empty parts, i.e., Stirling numbers of the second kind. For integers $1\leq x < y \leq N$, let
\be{
\tau_{x,y}:= \sum_{n\geq0} \Ind\bcls{X_n\in(i,j]}
} 
be the number of generations while the number of individuals in the genealogy is in the interval $(x,y]$. 
A key to our approach is the following result, bounding the second moment of $\tau_{x,y}$.

\begin{lemma}\label{lem:durint}
Fix integers $x,y$ with $2 \leq x  <y \leq N$. Then
\be{
\IE\bcls{\tau_{x,y}^2|X_0=N}\leq 21+ 85  \bbbbclr{ \frac{6 N(y-x+1)}{x(x-1)}}^2.
}
\end{lemma}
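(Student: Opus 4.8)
The plan is to work with the backward ancestral chain $(X_n)_{n\geq 0}$ and exploit three of its features: it is non-increasing (since $j$ individuals have at most $j$ distinct parents), so each level is visited in at most one contiguous block; the transition out of a state $j$ depends on $j$ alone, so the length of the block at level $\ell$ (given that $\ell$ is ever visited) is $\Ge(r_\ell)$ with $r_\ell:=1-\prod_{i=1}^{\ell-1}(1-i/N)$, the probability that $\ell$ individuals fail to produce $\ell$ distinct parents; and, as in any Markov chain, the jump chain (the sequence of distinct states) is independent of the holding times. Writing $A_\ell:=\Ind[\text{the chain visits level }\ell]$ and letting $\{G_\ell\}_{x<\ell\leq y}$ be independent $\Ge(r_\ell)$ variables, independent of $(A_\ell)_\ell$, this gives the representation $\tau_{x,y}=\sum_{\ell=x+1}^y A_\ell G_\ell$ and hence, using the independence,
\be{
\IE\bcls{\tau_{x,y}^2\mid X_0=N}=\sum_{\ell=x+1}^y \IE[A_\ell]\,\IE\bcls{G_\ell^2}+\sum_{x<\ell\neq m\leq y}\IE[A_\ell A_m]\,\IE[G_\ell]\IE[G_m].
}
Everything reduces to estimating $\IE[G_\ell]=1/r_\ell$, $\IE[G_\ell^2]\leq 2/r_\ell^2$, and the visiting probabilities $\IE[A_\ell]$ and $\IE[A_\ell A_m]$.

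The geometric parameter is handled by a birthday-type estimate: from $\prod_{i=1}^{\ell-1}(1-i/N)\leq e^{-\ell(\ell-1)/2N}$ together with $1-e^{-u}\geq\tfrac12\min\{u,1\}$ one gets $r_\ell\geq\tfrac12\min\bclc{1,\ell(\ell-1)/2N}$, hence $1/r_\ell\leq 2+4N/(\ell(\ell-1))$. The main quantitative input — and where I expect the real work to be — is controlling how the chain moves one step. The key exact identity is
\be{
\IE\bcls{X_{n+1}(X_{n+1}-1)\mid X_n=j}=N(N-1)\bclr{1-2(1-\tfrac1N)^j+(1-\tfrac2N)^j},
}
which, since $s\mapsto(1-s/N)^j$ is convex on $[0,N]$, is at most $\tfrac{N-1}{N}\,j(j-1)$; Markov's inequality then yields $\IP(X_{n+1}\geq\ell\mid X_n=j)\leq j(j-1)/(\ell(\ell-1))$ for $\ell\geq 2$, and (applied coordinate-wise in time) shows that the occupied-box count contracts multiplicatively, $\IE[X_{n+1}(X_{n+1}-1)\mid X_n]\leq\tfrac{N-1}{N}X_n(X_n-1)$. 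These estimates encode the crucial fact that, once $\ell$ is of order $\sqrt N$ or larger, the chain skips past most levels, so that $\IE[A_\ell]\ll1$ and the expected occupation of level $\ell$ satisfies $\IE[A_\ell]/r_\ell=\IE\bcls{\#\{n:X_n=\ell\}\mid X_0=N}\leq\min\bclc{1/r_\ell,\ cN/(\ell(\ell-1))}$ for a universal $c$; in the ``coalescent regime'' $\ell(\ell-1)\leq 2N$ the trivial bound $\IE[A_\ell]\leq1$ already gives the second term via $1/r_\ell\leq 4N/(\ell(\ell-1))$. For the cross term I would use $\IE[A_\ell A_m]\leq\IE[A_{\ell\vee m}]$ (having passed the higher of the two levels, reaching the lower one only makes the event more likely, not less) followed by the single-level bound.

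The two sums are then assembled. Alternatively — and this is probably cleaner for tracking constants — one can argue directly on the chain: first produce a Lyapunov function $h\geq0$, vanishing on $\{1,\dots,x\}$, increasing on the band and constant above $y$, with $\IE[h(X_{n+1})\mid X_n=j]\leq h(j)-\Ind[x<j\leq y]$ for $j>x$, defined through the recursion forced by the occupancy estimates above, so that $\IE[\tau_{x,y}\mid X_0=N]\leq h(N)$; then, with $g:=h^2+\beta h$ where $\beta-1:=\sup_{x<j\leq y}\IE\bcls{(h(X_{n+1})-h(j))^2\mid X_n=j}$ (again controlled by the one-step estimates), a short calculation shows $g$ is a supersolution for $\IE_\cdot[\tau_{x,y}^2]$, whence $\IE\bcls{\tau_{x,y}^2\mid X_0=N}\leq h(N)^2+\beta h(N)$. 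In either route the size of $h(N)$ (equivalently of $\sum_\ell\IE[A_\ell]/r_\ell$) is estimated using the telescoping identity
\be{
\sum_{\ell=x+1}^y\frac1{\ell(\ell-1)}=\frac1x-\frac1y\leq\frac{y-x+1}{x(x-1)}
}
and its partial-sum analogue, so that each resulting sum is bounded by a universal constant times $\bclr{N(y-x+1)/(x(x-1))}^2$, times $N(y-x+1)/(x(x-1))$, or by a constant; absorbing the linear contribution into the quadratic one via $2ab\leq a^2+b^2$ and tracking the (deliberately generous) numerical constants produces the stated $21+85\bclr{6N(y-x+1)/(x(x-1))}^2$. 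The delicate point, as flagged, is the one-step occupancy control — making the bound uniform across the coalescent regime (where the chain barely moves and $\IE[A_\ell]\approx1$) and the bulk regime (where it skips most levels), and feeding it into a single Lyapunov function that works in both.
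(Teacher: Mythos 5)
Your occupation-time decomposition $\tau_{x,y}=\sum_{\ell=x+1}^y A_\ell G_\ell$ with independent geometric holding times is valid, and your lower bound on $r_\ell$ is essentially the paper's lower bound on the mean collision count. But the two quantitative inputs you lean on are exactly where the argument breaks. First, the single-level occupation bound $\IE[A_\ell]/r_\ell\leq cN/(\ell(\ell-1))$ does not follow from the estimates you list: the tail bound $\IP(X_{n+1}\geq\ell\mid X_n=j)\leq j(j-1)/(\ell(\ell-1))$ and the contraction of $\IE[X_n(X_n-1)]$ control the time spent \emph{at or above} level $\ell$, not at exactly $\ell$. For $\ell\gg\sqrt N$ the chain jumps by roughly $\ell^2/N\gg1$ per step, so $\IE[A_\ell]$ is small only because the chain usually \emph{skips} $\ell$; proving $\IP(\text{hit }\ell)\lesssim N/\ell^2$ is an overshoot/renewal-density estimate that you have not supplied and that is the genuinely hard part of this route. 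Second, the cross-term bound $\IE[A_\ell A_m]\leq\IE[A_{\ell\vee m}]$ loses a factor of up to $N$: take $x=\lceil N/2\rceil$, $y=N$, so that $\IE[G_\ell]\asymp1$ and the right-hand side of the lemma is $O(1)$; your bound gives $\sum_{\ell<m}\IE[A_m]\asymp N^2\cdot(c/N)=cN$, because it discards the (order $1/N$) probability that the chain also hits the \emph{lower} level. One must factor through the strong Markov property at the hitting time of $m$ and reuse the hitting estimate for $\ell$, which again requires the missing overshoot bound. The Lyapunov alternative could be made to work, but it is only sketched; note that the unit-drift requirement forces $h\geq1$ on the whole band, so the natural candidate $h(j)=6N(1/x-1/(j\wedge y))_+$, whose value at $N$ matches the target, cannot satisfy the drift inequality when that target is below $1$, and the corrected $h$ (and the verification of its drift in both the coalescent and bulk regimes) is precisely what is not written down.

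The paper avoids all per-level hitting probabilities by a different device: since $\law(X_n-X_{n+1}\mid X_n=z)$ is stochastically increasing in $z$ (it is $z$ minus the number of occupied boxes when $z$ balls are placed in $N$ boxes), every increment made while the chain is in $(x,y]$ stochastically dominates the increment at level $x$, which in turn dominates $Z$, the number of boxes receiving at least two of $x$ balls thrown into $N$ boxes. Hence $\tau_{x,y}$ is dominated by the number of renewals in $[0,y-x)$ of an i.i.d.\ renewal process with increments distributed as $Z$. The second moment is then $1+5\sum_n n\,\IP(S_n<y-x)$, controlled by a sub-Gaussian lower-tail inequality for $Z$ (imported from Bartroff, Goldstein and I\c{s}lak), and the constants come from $\IE Z\geq x(x-1)/(6N)$. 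This i.i.d.\ worst-case domination is the idea your proposal is missing; with it, the uniformity across the coalescent and bulk regimes that you flag as the delicate point is automatic.
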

Considering the number of generations that a genealogy spends in various intervals has figured
into a number of studies of the expected time back to the most recent common ancestor in the Wright-Fisher and related models, such as
\cite{Dalal2002},
\cite{Fill2002},
\cite{McSweeney2008}, and 
\cite{Mohle2004}. However, Lemma~\ref{lem:durint} 
is more general (applying to generic intervals $(x,y]$ rather than specific choices) and detailed (e.g., explicit constants) than these references. 
Our approach, also taken in \cite{Hitczenko2005}, is to 
view the increments $\law(X_{n}-X_{n+1}|X_n)$ as an inhomogeneous renewal process, and the lemma then follows
by comparing to a renewal process with increments stochastically dominated by the smallest increment of $\law(X_{n}-X_{n+1}|X_n=z)$ for $z\in(x,y]$ (which is $z=x$). Now considering 
\be{
E_{x,y}:=\sum_{n\geq0} X_n \Ind\bcls{X_n\in(x,y]},
}
which is the number of individuals in the genealogy from generations where the number of individuals in the genealogy is in the interval $(x,y]$, we can bound
\ben{\label{eq:edgebd}
E_{x,y}\leq y \tau_{x,y},
}
and Lemma~\ref{lem:durint} yields bounds on the second moment of $E_{x,y}$. 
Using this idea gives the following lemma.

\begin{lemma}\label{lem:numedges}
Set $N\geq 3$ and  $2 \leq L< N$. 
Then 
\be{
\IE\bcls{E_{L,N}^2| X_0=N} \leq \bclr{ 6\times 10^6 }\bclr{N \log(N)}^2.
}
\end{lemma}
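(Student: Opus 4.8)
The plan is a dyadic (geometric) decomposition of the range $(L,N]$, reducing everything to the second-moment estimate of Lemma~\ref{lem:durint} applied on each block. First observe that the ancestral-count chain $(X_n)_{n\ge0}$ with $X_0=N$ is non-increasing (the number of ancestors cannot grow going backward in time), so $X_n\le N$ for all $n$ and, by Lemma~\ref{lem:durint}, every $\tau_{x,y}$ with $2\le x<y\le N$ has finite second moment. Set $j_0:=\floor{\log_2 L}+1$ and $j_1:=\ceil{\log_2 N}$; since $L\ge2$ we have $2\le 2^{j_0-1}\le L$, and $N/2\le 2^{j_1-1}<N\le 2^{j_1}$, and $j_0\le j_1$. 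For $j_0\le j\le j_1$ define
\[
V_j:=\sum_{n\ge0}X_n\,\Ind\bcls{X_n\in(2^{j-1},2^j]\cap(L,N]},
\]
so that $E_{L,N}=\sum_{j=j_0}^{j_1}V_j$. On the event defining the $j$-th summand one has $X_n\le\min\{2^j,N\}$, hence $V_j\le\min\{2^j,N\}\,\tau_{2^{j-1},\,\min\{2^j,N\}}$, and the $\tau$ appearing always satisfies the hypotheses of Lemma~\ref{lem:durint} (lower endpoint $2^{j-1}\ge2$, upper endpoint $\le N$, and strictly larger than the lower one).

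The crux is that the bound of Lemma~\ref{lem:durint} is \emph{scale-invariant} across a dyadic block. With lower endpoint $x=2^{j-1}$ and upper endpoint $y\le 2x$ — this holds for a full block $y=2^j=2x$, and also for the truncated top block $y=N\le 2^{j_1}=2\cdot 2^{j_1-1}$ — one has $x(x-1)\ge x^2/2$ and $y-x+1\le 2x$, so $\tfrac{6N(y-x+1)}{x(x-1)}\le 24N/x$ and therefore $\IE\bcls{\tau_{x,y}^2\mid X_0=N}\le 21+85(24N/x)^2$. Since the prefactor $\min\{2^j,N\}$ multiplying $\tau$ in $V_j$ is itself at most $2x$ and at most $N$, squaring and multiplying cancels the $x^2$ against the $1/x^2$, giving
\[
\IE\bcls{V_j^2\mid X_0=N}\le C_0\,N^2
\]
for an explicit absolute constant $C_0$ of order $10^5$ (bookkeeping gives, e.g., $C_0\le 2\times10^5$), \emph{uniformly in $j$}.

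Finally, the number of blocks is $j_1-j_0+1=\ceil{\log_2 N}-\floor{\log_2 L}\le\log_2 N$, so by the triangle inequality in $L^2$ (equivalently Cauchy-Schwarz in the index $j$),
\[
\bnorm{E_{L,N}}_2\le\sum_{j=j_0}^{j_1}\bnorm{V_j}_2\le(\log_2 N)\sqrt{C_0}\,N,
\]
and converting $\log_2 N=\log N/\log 2$ yields $\IE\bcls{E_{L,N}^2\mid X_0=N}\le\bclr{C_0/(\log2)^2}(N\log N)^2$, which for $N\ge3$ is comfortably below $6\times10^6(N\log N)^2$.

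The main work here, beyond invoking Lemma~\ref{lem:durint}, is purely bookkeeping: verifying the uniform block bound with honest constants, correctly handling the truncated top block $(2^{j_1-1},N]$ (not a full dyadic interval) and the degenerate case $j_0=j_1$, and checking the small-$N$ edge cases (e.g.\ $N=3$, $L=2$). There is no conceptual obstacle once the scale-invariance of Lemma~\ref{lem:durint} across dyadic blocks is recognized; the genuinely hard part of this circle of ideas is Lemma~\ref{lem:durint} itself, not the present deduction.
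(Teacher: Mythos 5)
Your proof is correct, but it takes a genuinely different route from the paper's. The paper splits $(L,N]$ into three ranges with $L_1\approx (N/\log N)^{1/2}$ and $L_2\approx (3N)^{1/2}$: on $(L,L_1]$ it avoids Lemma~\ref{lem:durint} entirely and instead bounds the holding time in each state $k$ by a geometric variable with parameter $1-p_{kk}\gtrsim \binom{k}{2}/N$ (a coalescent-type argument); on $(L_1,L_2]$ it applies Lemma~\ref{lem:durint} once; and on $(L_2,N]$ it introduces a bespoke harmonic sequence of cutoffs $\ell_i=\lceil NL_2/(N-L_2i)\rceil$, chosen so that each subinterval captures roughly one downward jump of the chain, and recombines via Cauchy--Schwarz. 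You instead use a single uniform dyadic decomposition and observe that the bound of Lemma~\ref{lem:durint} is scale-invariant on blocks with $y\le 2x$: with $x=2^{j-1}\ge 2$ one gets $6N(y-x+1)/(x(x-1))\le 24N/x$, and the prefactor $\min\{2^j,N\}\le 2x$ cancels the $1/x^2$, giving $\IE[V_j^2]\le (84+4\cdot 85\cdot 576)N^2<2\times 10^5\,N^2$ uniformly in $j$; Minkowski over at most $\log_2 N$ blocks then yields roughly $4.1\times 10^5 (N\log N)^2$, comfortably inside the stated constant. I checked the block endpoints are integers satisfying the hypotheses $2\le x<y\le N$ of Lemma~\ref{lem:durint}, that the truncated top block and the case of a single block cause no trouble, and that the constants work out as you claim. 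Your argument is shorter, handles all scales identically (no separate coalescent computation, no $\ell_i$ sequence), and actually delivers a better constant; what the paper's finer top-range decomposition buys is essentially nothing extra for the stated $(N\log N)^2$ rate, since its own bound on $(L_2,N]$ is also of order $(N\log N)^2$.
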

The key argument for this result 
is to break the interval $(L,N]$ into smaller intervals,
and bound the number of individuals while the genealogy is in the smaller intervals by
the time spent times the upper value of the interval, as per~\eq{eq:edgebd}, and then apply Lemma~\ref{lem:durint}.
Theorem~\ref{thm:KN2mombd} then easily follows as described above, with $E=E_{L,N}$ 
for $L=2$. 

Finally, we remark that the approach would also work for higher moments of $\tau_{x,y}$ and~$E_{x,y}$. We now prove the  results rigorously.

\begin{proof}[Proof of Lemma~\ref{lem:durint}]
Our approach is to couple the increments $X_{n}-X_{n+1}$ that fall in $(x,y]$ to a stochastically smaller i.i.d.\ sequence, so that  $\tau_{x,y}$ is dominated by the number of renewals in $[0,y-x)$ of a renewal process driven by the i.i.d.\ sequence. The key observation is that
$\law(X_{n}-X_{n+1}| X_n=z)$ is stochastically increasing in $z$, which follows easily after noting that it is the same distribution as the number of occupied bins when allocating $z$ balls into $N$ bins. Thus $\tau_{x,y}$ is stochastically dominated from above by the number of renewals in $[0,y-x)$ (inclusive of the initial renewal at zero) in a renewal process with inter-arrival distribution $x-Y$, where $\law(Y)=\law(X_1 | X_0=x)$; also note that we are using that $\law(\tau_{x,y}|X_0=N)\stleq \law(\tau_{x,y}|X_0=y)$, where $\stleq$ denotes stochastic domination, corresponding to starting the renewal process at zero, rather than $y$ minus where it enters (if at all) in the process $(X_n | X_0=N)$.
Now, since $Y$ has the same distribution as the number of occupied bins when allocating $x$ balls into $N$ bins, a moment's thought shows that $\law(x-Y)\geq_{\mathrm{st}}\law(Z)$, where $Z$ is the number of bins with at least two balls when allocating the $x$ balls into $N$ bins. Therefore, the number of renewals in $[0,y-x)$ of  a renewal process driven by $\law(x-Y)$ is stochastically dominated by that of a renewal process driven by $\law(Z)$. If $Z_1,Z_2, \ldots$ are i.i.d.\ distributed as $Z$, and we define $S_n=\sum_{i=1}^n Z_i$, then we have the bound
\ban{
\IE\bcls{\tau_{x,y}^2}&\leq \IE\bbcls{\bbclr{1 + \sum_{n=1}^\infty \Ind[ S_n < y-x]}^2}  \notag \\
	&= 1 + 3 \sum_{n=1}^\infty \IP(S_n < y-x) + 2 \IE \bbclc{ \sum_{1\leq m < n } \Ind[ S_m < y-x]\Ind[ S_n < y-x] } \notag \\  
	& = 1 + 3 \sum_{n=1}^\infty \IP(S_n < y-x) + 2 \sum_{n=2 }^{\infty} (n-1)\IP( S_n < y-x)  \notag \\
	& \leq 1 + 5 \sum_{n=1 }^{\infty} n \IP( S_n < y-x) \notag  \\
	& \leq 1 + 5 \bclr{\tsfrac{y-x+1}{\mu}}^2 + 5 \sum_{n\geq\tsfrac{y-x+1}{\mu} }^{\infty} n \IP\bclr{ S_n-n \mu < -(n \mu - (y-x))},\label{eq:2mombd}
}
where $\mu:=\IE[Z]$.
To bound further, we need some concentration properties of~$\law(Z)$.
From \cite[Appendix and Lemma~3.4]{Bartroff2018}, for any $\theta<0$,
\be{
\IE\bcls{e^{\theta(Z-\mu)}}\leq e^{\frac{\mu \theta^2}{2}},
}
so then for $\theta<0$,
\be{
\IE\bcls{e^{\theta(S_n-n\mu)}} \leq e^{\frac{n \mu \theta^2}{2}}.
}
Therefore, for $t>0$, and $\theta=-t/(n\mu)  <0$, 
\besn{\label{eq:concincz}
\IP\bclr{(S_n-n\mu) \leq  - t} & = \IP\bclr{\theta (S_n-n\mu) \geq -\theta t} \\
	& \leq e^{\theta t+ \frac{n \mu \theta^2}{2} }  \\
	&= e^{-t^2/(2n\mu)}.
}
We use this fact\footnote{It is worth mentioning here why we have shifted the problem to $Z$ rather than $x-Y$. \cite{Bartroff2018} show that the number of occupied bins $Y$ also satisfies a concentration inequality. However, the analog of~\eq{eq:concincz} would be an upper tail, leading to a worse reliance on $t$ in the exponent, and, more importantly,~$\IE[Y]$ is of order $x$ rather than the smaller order $x^2/N$ of $\mu$.} to bound the sum in~\eq{eq:2mombd} as
\ba{
\sum_{n\geq\tsfrac{y-x+1}{\mu}  }^{\infty} &n  \exp\bbbclc{-\frac{(n \mu - (y-x))^2}{2n \mu}} \\
	&=\sum_{n\geq\tsfrac{y-x+1}{\mu}}^{\infty} n  \exp\bbbclc{-\frac{n\mu-(y-x)}{2} \bbbclr{1 - \frac{y-x}{n \mu}}}\\
	&\leq \sum_{n\geq\tsfrac{y-x+1}{\mu} }^{\infty} n  \exp\bbbclc{-\frac{n\mu-(y-x)}{2} \frac{ 1 }{y-x+1}} \\
	&=\exp\bbbclc{\frac{y-x}{2(y-x+1)} } \sum_{n\geq\tsfrac{y-x+1}{\mu} }^{\infty} n  \exp\bbbclc{-\frac{n\mu}{2\clr{y-x+1}}} \\
	&=  \frac{\exp\bbbclc{\frac{ y-x-\mu \ceil{\frac{y-x+1}{\mu}}}{2(y-x+1)}}}{\bbclr{
			1-e^{-\frac{\mu}{2\clr{y-x+1}}}}^{2}}
	\bbbclr{1+
		\bbclr{
			1-e^{-\frac{\mu}{2\clr{y-x+1}}}}\bbclr{\bbceil{\frac{y-x+1}{\mu}}-1}} \\
	&\leq \frac{3}{2} \exp\bbbclc{-\frac{1}{2(y-x+1)}}\bbbclr{
			1-e^{-\frac{\mu}{2\clr{y-x+1}}}}^{-2} \\
	&\leq 4 \bbbclr{1 + 4\bbclr{ \frac{y-x+1}{\mu} }^2},
}
where in the second to last inequality we have used that $1-e^{-t}\leq t$, and in the last inequality that for $t> 0$, 
\be{
\bclr{1-e^{-t}}^{-2}  \leq\bclr{1-e^{-1}}^{-2}\bbclr{ \Ind[t\geq 1]+ t^{-2} \Ind[t<1]}\leq \tsfrac{8}{3}\bclr{1+ t^{-2}}.
}
Plugging this bound into~\eq{eq:2mombd}, we have
\be{
\IE\bcls{\tau_{x,y}^2}\leq 21 + 85 \bclr{\tsfrac{y-x+1}{\mu}}^2. 
}
To complete the proof, we need a lower bound for $\mu$. Write
\be{
Z=\sum_{i=1}^N I_i, 
}
where $I_i$ is the indicator that the $i$th bin has at least two balls in it. The $I_i$ have the same distribution and
\be{
\IP(I_i=1)=1-\tsfrac{x}{N} \bclr{1-\tsfrac{1}{N}}^{x-1} -\bclr{1-\tsfrac{1}{N}}^{x},
}
so that
\be{
\mu=N\bclr{1-\tsfrac{x}{N} \bclr{1-\tsfrac{1}{N}}^{x-1} -\bclr{1-\tsfrac{1}{N}}^{x}}.
}
Expanding this expression as a power series in $(1/N)$ and noting that $2 \leq x \leq N$ so that the alternating series terms are decreasing in absolute value, we find
\be{
\mu \geq \frac{x(x-1)}{2N} - \frac{x(x-1)(x-2  )}{3 N^2},
}
and the result now easily follows after noting that, since $2 \leq x \leq N$, 
\be{
\frac{x(x-1)}{2N}\bbbclr{1 - \frac{2 (x-2)}{3 N}}\geq \frac{x(x-1)}{6N}. \qedhere
}
\end{proof}

\begin{proof}[Proof of Lemma~\ref{lem:numedges}]
We first break up the interval $(L,N]$ into three subintervals: $(L,L_1]$, $(L_1,L_2]$, $(L_2,N]$,
where $L_1=\bceil{(N/\log(N))^{1/2}}$ and $L_2=\bfloor{(3N)^{1/2}}$. The first interval is the easiest to handle, since in this range, the genealogy is well-approximated by the coalescent.  
More precisely, if $p_{kk}=\prod_{i=1}^{k-1}\bclr{1-i/N}$ is the probability the Markov chain holds in state $k$,
then the amount of time $T_k$ that the chain spends in state $k$ is stochastically dominated by
a positive geometric distribution with parameter  $(1-p_{kk})$ (the domination because the chain may not visit the state). 
Thus 
\besn{\label{eq:ell1bd}
\IE\bcls{E_{L,L_1}^2}
	&=\IE\bbbbcls{\bbbclr{\sum_{k=L}^{L_1} kT_k}^2} \\
	&=\sum_{k=L}^{L_1} k^2 \IE\bcls{T_k^2} + \sum_{k\not=\ell}^{L_1} k\ell \IE[T_k]\IE[T_\ell] \\
	&\leq 2 \bbbclr{\sum_{k=L}^{L_1} \frac{k}{1- p_{kk}}}^2.
}
Now,  for $k\leq L_1$, a Bonferroni inequality implies that
\be{
p_{kk} \leq 1- \frac{\binom{k}{2}}{N} + \frac{\binom{k}{2}\binom{k-2}{2}}{2N^2}+\frac{k\binom{k-1}{2}}{N^2} 
	\leq 1- \frac{\binom{k}{2}}{N} \bbbclr{ 1 - \frac{1}{4\log(N)}- \frac{1}{\sqrt{N\log(N)}}} \leq 1- \frac{1}{4}\frac{\binom{k}{2}}{N},
}
where we used that $k\leq(N/\log(N))^{1/2}+1$ and $N\geq 3$. 
Plugging this bound into~\eq{eq:ell1bd} gives 
\ben{\label{eq:245}
\IE\bcls{E_{L,L_1}^2} \leq 128 N^2\bbbclr{\sum_{k=L}^{L_1} \frac{1}{k-1} }^2\leq  128 \bclr{N \log(N)}^2.
}

The second interval requires a different argument--also used in the third interval--which is to bound 
the total number of individuals in the genealogy living while the population size is in the interval, by the maximum of the interval times the time spent in the interval.
That is, we can bound
\be{
E_{L_1,L_2} \leq L_2 \,  \tau_{L_1,L_2},
}
and so, using Lemma~\ref{lem:durint} and that $L_2\leq \sqrt{3N}$, $L_1\geq (N/\log(N))^{1/2}$ and $N\geq3$, we find
\besn{\label{eq:245a}
\IE\bcls{E_{L_1,L_2}^2}
			&\leq L_2^2 \IE\bcls{\tau_{L_1,L_2}^2}   \\
			 &\leq 3N \bbbclr{21 + (85)(36) N^2 \frac{ L_2^2}{L_1^2(L_1-1)^2}} \\
			 &\leq 3N  \bbbclr{21 + (85)(36) (3)N \log(N)^2 \bbclr{1-\sqrt{\log(N)/N}}^{-2} } \\
			 &\leq 3N \bbclr{21 + (85)(18) (39)N \log(N)^2 } \\
			 &\leq (86)(64) (39)N^2\log(N)^2.
}

For the third interval, the basic idea is to break the interval $(L_2,N]$  into smaller intervals, and apply the argument of the second interval. To determine the intervals, note that the jump size of the genealogy Markov chain at level~$z$ is of order $z^2/N$, (cf.\ Lemma~\ref{lem:durint}), and we choose the smaller intervals to each capture about one jump, so the two terms from Lemma~\ref{lem:durint} are of comparable order. That is, if $\ell_i$ is the cutoff for the $i$th interval ($\ell_0=L_2$),
then we choose $\ell_{i+1}-\ell_i \approx \ell_i^2/N$. More concretely, set  
\be{
\ell_i=\bbbceil{\frac{N L_2}{N- L_2 i}}, \,\,\, i=0,\ldots, M-1, 
}
where 
\be{
M-1=\max\bbclc{i: \bbceil{\tsfrac{N L_2}{N- L_2 i}} < N},
}
and $\ell_M=N$.
We have
\be{
E_{L_2, N}\leq \sum_{i=0}^{M-1} E_{\ell_i,\ell_{i+1}} \leq \sum_{i=0}^{M-1} \ell_{i+1} \tau_{\ell_i,\ell_{i+1}},
}
so that
\be{
E_{L_2,N}^2 \leq 
\sum_{i,j=0}^{M-1} \ell_{i+1}\ell_{j+1}\tau_{\ell_{i},\ell_{i+1}}\tau_{\ell_{j},\ell_{j+1}}.
}
Taking expectation and using the Cauchy-Schwarz inequality, we find
\ben{\label{eq:E2mom}
\IE\bcls{E_{L_2,N}^2}\leq
 \bbbbclr{ \sum_{i=0}^{M-1} \ell_{i+1}\sqrt{ \IE \bcls{\tau_{\ell_i,\ell_{i+1}}^2 }}}^2. 
}
Using Lemma~\ref{lem:durint} to bound  $\IE\bcls{\tau_{\ell_i,\ell_{i+1}}^2}$, noting that $\sqrt{a^2 + b^2} \leq a + b$ for $a,b\geq 0$, we have
\ben{\label{eq:E2moma}
 \sum_{i=0}^{M-1} \ell_{i+1}\sqrt{ \IE \bcls{\tau_{\ell_i,\ell_{i+1}}^2 }} 
 	\leq  	\sum_{i=0}^{M-1} \ell_{i+1}\bbbclr{\sqrt{21}+ 6 \sqrt{85} N \bbbclr{\frac{\ell_{i+1}-\ell_i+1}{\ell_i(\ell_i-1)}} }.
	}
Using the bounds for $j=1,\ldots, M-1$, 
\ben{\label{eq:ljbds}
\frac{N L_2}{N- L_2 j} \leq \ell_j < \frac{N L_2}{N- L_2 j}+1,
}
and, in the second inequality, that $2\leq \frac{NL_2}{N-L_2(i+1)}-\frac{NL_2}{N-L_2i}$ for $L_2 \geq \sqrt{3N}-1$, and, in the penultimate  inequality that $M<\frac{N}{L_2}$, 
we find that for $i=0,\ldots, M-2$, 
\besn{\label{eq:E2momb}
\ell_{i+1}\bbbclr{\sqrt{21}+ 6& \sqrt{85} N \bbbclr{\frac{\ell_{i+1}-\ell_i+1}{\ell_i(\ell_i-1)}} } \\
	&\leq\ell_{i+1}
	\bbbclr{\sqrt{21}+ 6 \sqrt{85} N \frac{\frac{NL_2}{N-L_2(i+1)}-\frac{NL_2}{N-L_2i}+2}{\bclr{\frac{NL_2}{N-L_2i}}^2\bclr{1-\frac{N-L_2i}{NL_2}}}} \\
	&\leq\ell_{i+1}
	\bbbclr{\sqrt{21}+ 12 \sqrt{85}\bbbclr{\frac{ L_2}{L_2-1}} \bbbclr{\frac{N-L_2i}{N-L_2(i+1)}}} \\
	&\leq\ell_{i+1}
	\bbbclr{\sqrt{21}+ 24 \sqrt{85}\bbbclr{\frac{N-L_2(M-2)}{N-L_2(M-1)}}} \\
	&\leq\ell_{i+1}
	\bbbclr{\sqrt{21}+ 48\sqrt{85} }\\
	&\leq49\sqrt{85} \, \ell_{i+1}.
}
Also note that, in particular, since $\ell_{M-1}<N$, 
\besn{\label{eq:E2mombb}
\ell_{M-1}\bbbclr{\sqrt{21}+ 6& \sqrt{85} N \bbbclr{\frac{\ell_{M-1}-\ell_{M-2}+1}{\ell_{M-2}(\ell_{M-2}-1)}} }\leq 49\sqrt{85} N.
}
For the final term in the sum~\eq{eq:E2moma}, note that $\ell_M=N$ and $NL_2/(N-ML_2)\geq N$ implies $M\geq (N-L_2)/L_2$, which, using~\eq{eq:ljbds}, implies $\ell_{M-1}\geq N/2$, and so
\besn{\label{eq:E2momc}
\ell_{M}\bbbclr{\sqrt{21}+ 6 \sqrt{85} N \bbbclr{\frac{\ell_{M}-\ell_{M-1}+1}{\ell_{M-1}(\ell_{M-1}-1)}} }
	&\leq N \bbbclr{\sqrt{21}+ 6 \sqrt{85} N \frac{N/2+1}{(N/2)(N/2-1)}} \\
	&\leq N \bbbclr{\sqrt{21}+ 60\sqrt{85} }\\
		&\leq 61\sqrt{85} N.
}
Combining~\eq{eq:E2mom},~\eq{eq:E2moma},~\eq{eq:E2momb} with~\eq{eq:ljbds},~\eq{eq:E2mombb}, and~\eq{eq:E2momc}, and using again that $M < N/L_2$, $N\geq 3$, and $L_2\geq \sqrt{3N}-1$, 
we have
\besn{\label{eq:245b}
\sqrt{\IE\bcls{E_{L_2,N}^2}}
	&\leq 110 \sqrt{85} N
		+49\sqrt{85} \sum_{i=0}^{M-3}\bbbclr{ \frac{N L_2}{N- L_2 (i+1)}+1} \\
	&\leq 110 \sqrt{85} N
		+49\sqrt{85}  \bbbclr{ N  \int_{0}^{M-2} \frac{1}{\frac{N}{L_2} -(x+1)} dx + (M-2) } \\
	&\leq  110 \sqrt{85} N
		+49\sqrt{85}  \bclr{ N \log(N/L_2) + N/L_2 -2} \\
	&\leq 135 \sqrt{85} N \log(N).
}
Finally, we use the Cauchy-Schwarz inequality (for vectors on $\IR^3$) to find
\be{
\IE\bcls{(E_{L,L_1}+E_{L_1,L_2} + E_{L_2,N})^2}\leq 3 \bclr{\IE\bcls{E_{L,L_1}^2} +\IE\bcls{E_{L_1,L_2}^2} + \IE\bcls{E_{L_2,N}^2}},
}
and combining this with~\eq{eq:245},~\eq{eq:245a} and~\eq{eq:245b} implies 
\be{
\IE\bcls{ E_{L,N}^2} \leq 3\bclr{128+ (86)(64) (39)+(85)(135)^2} \bclr{N \log(N)}^2 \leq \bclr{ 6\times 10^6 }\bclr{N \log(N)}^2. \qedhere
}
\end{proof}

\begin{proof}[Proof of Theorem~\ref{thm:KN2mombd}]
For any $2\leq L <N$, $K_N$ is stochastically dominated by $L+X_{L,N}$, where $X_{L,N}\sim\Bi(E_{L,N}, q)$.
Setting $L=2$, applying Lemma~\ref{lem:numedges}, using Jensen's inequality, and that $N\geq3$, we find
\ba{
\IE \bcls{K_N^2} &\leq L^2 + 2L q \IE[E_{L,N}]+ q \IE[E_{L,N}]+q^2 \IE[E_{L,N}^2] \\
	&\leq  L^2 + \bclr{ \sqrt{6}\times 10^3} (2 L+1) q   N \log(N) 
	+ \bclr{ 6\times 10^6}q^2  \bclr{N \log(N)}^2 \\
	&\leq  \log(N)^2  \bclr{4+\bclr{12\times 10^3} Nq+\bclr{ 6\times 10^6} (Nq)^2}. \qedhere
}
\end{proof}

\section{Acknowledgments}
HG would like to thank the School of Mathematics and Statistics at the University of Melbourne for their hospitality while some of this work was done.  NR was supported by Australian Research Council grant DP150101459.
We thank Bob Griffiths for comments regarding the number of types in the finite Wright-Fisher model in stationary, in particular for the suggestion to view the distribution of $K_N$ through the mutations in the genealogy back to the most recent common ancestor, and two referees for their helpful comments. 


\end{document}